\documentclass[10pt]{amsart}
\usepackage[style=alphabetic]{biblatex}
\usepackage{tikz}
\usepackage{amssymb}
\usepackage{mathtools}
\usepackage{amsthm}
\usepackage{aliascnt}
\usepackage{booktabs,tabularx,array}
\usepackage[all]{xy}
\usepackage{microtype}
\usepackage{xcolor}
\usepackage{listings}
\usepackage{hyperref}
\usepackage[nameinlink]{cleveref}
\usepackage{graphicx}

\makeatletter
    
    \@addtoreset{equation}{section}
  \makeatother

\hypersetup{
 colorlinks,
 linkcolor={teal},
 citecolor={teal},
 urlcolor={teal}
}

\calclayout

\DeclareFieldFormat
  [article,book,inbook,incollection,inproceedings,patent,thesis,unpublished]
  {title}{\emph{#1\isdot}}

\numberwithin{equation}{section}

\theoremstyle{plain}
\newtheorem{theorem}{Theorem}[section]
\newaliascnt{proposition}{theorem}
\newtheorem{proposition}[proposition]{Proposition}
\aliascntresetthe{proposition}
\newaliascnt{lemma}{theorem}
\newtheorem{lemma}[lemma]{Lemma}
\aliascntresetthe{lemma}
\newaliascnt{corollary}{theorem}

\aliascntresetthe{corollary}
\theoremstyle{definition}
\newaliascnt{definition}{theorem}

\aliascntresetthe{definition}
\theoremstyle{remark}
\newaliascnt{remark}{theorem}
\newtheorem{remark}[remark]{Remark}
\aliascntresetthe{remark}

\crefname{theorem}{Theorem}{Theorems}
\crefname{proposition}{Proposition}{Propositions}
\crefname{lemma}{Lemma}{Lemmas}
\crefname{corollary}{Corollary}{Corollaries}
\crefname{definition}{Definition}{Definitions}
\crefname{remark}{Remark}{Remarks}
\crefname{section}{Section}{Sections}
\crefname{equation}{equation}{equations}
\crefname{appendix}{Appendix}{Appendices}

\title[Ap\'ery-type approximations]{Ap\'ery-type approximations and irrationality measures for certain $q$-series}
\author[J. Koizumi]{Junnosuke Koizumi}
\address{RIKEN iTHEMS, Wako, Saitama 351-0198, Japan}
\email{junnosuke.koizumi@riken.jp}
\author[A. Yokoi]{Anju Yokoi}
\address{Ikeda Senior High School Attached to Osaka Kyoiku University, 1-5-1, Midorigaoka, Ikeda-shi, Osaka, 563-0026, Japan}
\email{anju.scorpion@icloud.com}
\date{\today}

\subjclass[2020]{11J82, 33D15, 05A30}
\keywords{$q$-hypergeometric series, $q$-WZ pair, irrationality measure, Pad\'e approximation, cyclotomic polynomial, partition}

\begin{document}

\begin{abstract}
We construct a three-parameter family of rational approximations to values of
$q$-hypergeometric series.  Using these approximations, we prove that, for
every integer $x$ with $|x|\geq2$, the values at $r=x^{-1}$ of Ramanujan's
theta function $\psi(r)=\sum_{n\geq0}r^{n(n+1)/2}$, the generating function
$\Delta(r)=\sum_{m\geq0}d(2m+1)r^m$ of the divisor function restricted to odd integers,
and the generating function $B_4(r)=\sum_{n\geq0}b_4(n)r^n$ for $4$-regular partitions
are irrational.  We further obtain the upper
bounds $18/7$, $18\pi^2/(7\pi^2-24)$, and $3$, respectively, for their
irrationality measures.  We also show that one of the constructed
approximations coincides with the Pad\'e approximation to a Lambert series
due to Coussement--Smet.
\end{abstract}

\maketitle

\setcounter{tocdepth}{1}
\tableofcontents

\section{Introduction}\label{sec:introduction}

\subsection{Background}

We begin by considering, for $q\in\mathbb C$ with $|q|>1$, the Lambert series
\[
\zeta_q(1)=\sum_{n=1}^{\infty}\frac{1}{q^n-1},
\qquad
\operatorname{Ln}_q(2)=\sum_{n=1}^{\infty}
\frac{(-1)^n}{q^n-1}.
\]
Our sign convention for $\operatorname{Ln}_q(2)$ follows
Amdeberhan--Zeilberger~\cite{amdeberhan1998q}.
Erd\H{o}s~\cite{erdos1948irrationality} proved the irrationality of
$\zeta_2(1)$.
Borwein~\cite{borwein1991irrationality} proved that, for every integer $q>1$
and every $0\neq r\in\mathbb Q$ satisfying $q^n+r\neq0$ for all $n\geq1$,
the sum $\sum_{n\geq1}(q^n+r)^{-1}$ is irrational.
Taking $r=-1$ and $r=1$, this result includes both $\zeta_q(1)$ and
$\operatorname{Ln}_q(2)$.
Indeed, separating the even- and odd-indexed terms gives
\[
\operatorname{Ln}_q(2)
=2\zeta_{q^2}(1)-\zeta_q(1)
=-\sum_{n=1}^{\infty}\frac{1}{q^n+1}.
\]

For integers $q>1$, Amdeberhan--Zeilberger gave Ap\'ery-type irrationality
proofs for $\zeta_q(1)$ and $\operatorname{Ln}_q(2)$ based on $q$-WZ pairs,
and showed that the irrationality measure of each is at most $4.8$.
We write
\[
(u;q)_m:=\prod_{j=0}^{m-1}(1-uq^j),
\qquad
(u;q)_0:=1.
\]
The discrete $1$-form they used for $\zeta_q(1)$ is
\[
\omega=F(n,k)\,\delta k+G(n,k)\,\delta n,
\]
\[
F(n,k)=-\frac{(q;q)_k}{(q;q)_{n+k+1}},\qquad
G(n,k)=\frac{q^{n+1}}{q^{n+1}-1}F(n,k),
\]
while the one for $\operatorname{Ln}_q(2)$ is
\[
\omega=F(n,k)\,\delta k+G(n,k)\,\delta n,
\]
\[
F(n,k)=(-1)^k
\frac{(q;q)_k(q;q)_n}{(q;q)_{n+k+1}(-q;q)_n},\qquad
G(n,k)=\frac{q^{n+1}}{q^{n+1}+1}F(n,k).
\]
Here $\delta k$ and $\delta n$ denote unit edges in the $k$- and
$n$-directions, respectively.
In both cases, the relation
\[
F(n+1,k)-F(n,k)=G(n,k+1)-G(n,k)
\]
holds, expressing the fact that the discrete $1$-form $\omega$ is closed.
Consequently, one can choose a potential $c(n,k)$ satisfying the difference
relations
\[
c(n,k+1)-c(n,k)=F(n,k),
\qquad
c(n+1,k)-c(n,k)=G(n,k).
\]
One then chooses a hypergeometric weight $b(n,k)$ and defines
\[
a_n=\sum_{k=0}^nc(n,k)b(n,k),
\qquad
b_n=\sum_{k=0}^nb(n,k).
\tag{1.1}
\]
These two sequences satisfy a common three-term recurrence.
As in Ap\'ery's method~\cite{apery1979irrationalite}, Amdeberhan--Zeilberger used this recurrence and an asymptotic estimate of the Casoratian to show that $a_n/b_n$ converges rapidly to the desired series $\zeta_q(1)$ and $\operatorname{Ln}_q(2)$, thereby proving the irrationality of these values and obtaining estimates of their irrationality measures.

\subsection{The three-parameter construction}

In this paper, we construct a three-parameter family containing the two
$q$-WZ pairs above.
To ensure simultaneously that all denominators appearing below are nonzero,
we assume throughout this section that $|q|>1$, $\alpha\beta\gamma\neq0$, and
\[
\alpha,\ \beta,\ \gamma,\ \frac{\beta q}{\alpha}
\notin q^{-\mathbb Z_{\geq0}}
:=\{1,q^{-1},q^{-2},\ldots\}.
\tag{1.2}
\]
This condition excludes $\beta=1$, $\gamma q^n=1$, and the zeros of the
finite $q$-Pochhammer symbols appearing below.
We also write
\[
\binom nk_q:=\frac{(q;q)_n}{(q;q)_k(q;q)_{n-k}}
\qquad (0\leq k\leq n).
\]

\begin{theorem}\label{thm:construction}
Under condition (1.2), set
\[
\begin{aligned}
\omega_{\alpha,\beta,\gamma}
&=F_{\alpha,\beta,\gamma}(n,k)\,\delta k
+G_{\alpha,\beta,\gamma}(n,k)\,\delta n,\\
F_{\alpha,\beta,\gamma}(n,k)
&=-\frac q\gamma\left(\frac\alpha\gamma\right)^k
\frac{(\beta q/\alpha;q)_k(\alpha;q)_n}
{(\beta;q)_{n+k+1}(\gamma;q)_n},\\
G_{\alpha,\beta,\gamma}(n,k)
&=\frac{\gamma q^n}{\gamma q^n-1}
F_{\alpha,\beta,\gamma}(n,k).
\end{aligned}
\]
Then
\[
F_{\alpha,\beta,\gamma}(n+1,k)-F_{\alpha,\beta,\gamma}(n,k)
=G_{\alpha,\beta,\gamma}(n,k+1)-G_{\alpha,\beta,\gamma}(n,k).
\]
Suppressing the subscripts $\alpha,\beta,\gamma$, define
\[
\begin{aligned}
c(n,k)
&:=\sum_{m=1}^{n}G(m-1,0)
+\sum_{m=1}^{k}F(n,m-1),\\
b(n,k)
&:=q^{k(k+1)/2}
\left(-\frac\gamma\alpha\right)^k
\binom nk_q
\frac{(\beta;q)_{n+k}}
{(\beta q/\alpha;q)_k(\alpha;q)_n}.
\end{aligned}
\]
This $c(n,k)$ satisfies the two difference relations above, and the sequences
$a_n,b_n$ defined by (1.1) satisfy a common three-term recurrence.
Setting $(\alpha,\beta,\gamma)=(q,q,q)$ gives the pair for $\zeta_q(1)$.
Setting $(\alpha,\beta,\gamma)=(q,q,-q)$ gives
$(\alpha/\gamma)^k=(-1)^k$ and hence the pair for
$\operatorname{Ln}_q(2)$.
\end{theorem}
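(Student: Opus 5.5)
The plan is to verify the closedness relation by a ratio computation, obtain the two difference relations for $c(n,k)$ from it, and then derive the common three-term recurrence by $q$-creative telescoping (the main obstacle). The closedness relation is purely a statement about hypergeometric term ratios. Since $G(n,k)=\frac{\gamma q^n}{\gamma q^n-1}F(n,k)$, I would divide both sides of the claimed identity by $F(n,k)$ and use the ratios
\[
\frac{F(n+1,k)}{F(n,k)}=\frac{1-\alpha q^n}{(1-\beta q^{n+k+1})(1-\gamma q^n)},\qquad
\frac{F(n,k+1)}{F(n,k)}=\frac{\alpha}{\gamma}\cdot\frac{1-\beta q^{k+1}/\alpha}{1-\beta q^{n+k+1}}.
\]
Both follow directly from the definitions of the $q$-Pochhammer symbols. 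Clearing the common denominator $(1-\beta q^{n+k+1})(1-\gamma q^n)$, both sides reduce to the polynomial
\[
-\alpha q^n+\gamma q^n+\beta q^{n+k+1}-\beta\gamma q^{2n+k+1}.
\]
This proves the identity. Condition (1.2) guarantees that none of the factors we divide by vanishes.

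Next come the two difference relations for $c(n,k)$. The relation $c(n,k+1)-c(n,k)=F(n,k)$ is immediate from the definition, since the first sum does not depend on $k$. For $c(n+1,k)-c(n,k)=G(n,k)$ I would induct on $k$. At $k=0$ it is the definition of the first sum. For the step, the defining sums give
\[
[c(n+1,k+1)-c(n,k+1)]-[c(n+1,k)-c(n,k)]=F(n+1,k)-F(n,k),
\]
and by closedness this equals $G(n,k+1)-G(n,k)$.

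For the recurrence I would first apply the $q$-Zeilberger algorithm to $b(n,k)$. Its ratios $b(n,k+1)/b(n,k)$ and $b(n+1,k)/b(n,k)$ are rational in $q^n,q^k,\alpha,\beta,\gamma$. The aim is an operator $L=A_0(n)+A_1(n)N+A_2(n)N^2$ (with $N$ the shift in $n$) and a rational certificate $R(n,k)$ satisfying
\[
L\,b(n,k)=\Delta_k\bigl(R(n,k)b(n,k)\bigr).
\]
Summing over $k$ then gives $L b_n=0$, because the boundary terms vanish: $\binom nk_q$ kills them, and the certificate should produce the factor $(1-q^k)$.

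For $a_n$ I would use the Apéry-style argument. Write
\[
c(n+j,k)=c(n,k)+\sum_{i<j}G(n+i,k),
\]
so that
\[
L\bigl(c(n,k)b(n,k)\bigr)=c(n,k)\,\Delta_k\bigl(Rb\bigr)+H(n,k),
\]
where $H$ is a sum of hypergeometric terms built from $G$ and $b$. Summing by parts in $k$ and using $\Delta_k c=F$ gives
\[
La_n=-\sum_k F(n,k)R(n,k+1)b(n,k+1)+\sum_k H(n,k)+\text{boundary}.
\]
The whole right-hand side is a finite sum of a single hypergeometric term in $k$. The main obstacle is showing that this vanishes identically. I would try first to exhibit it as $\Delta_k$ of an explicit hypergeometric term (Gosper's algorithm) whose boundary values vanish. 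If that gets unwieldy, I would instead use the freedom in the certificate: replacing $R$ by $R+$const does not change $L$. The weight $b$ was evidently chosen so that $F(n,k)b(n,k)$ collapses to a simple product, and that should make the Gosper step tractable.

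Finally, the specializations are direct substitutions. For $(\alpha,\beta,\gamma)=(q,q,q)$ we have $(\beta q/\alpha;q)_k=(q;q)_k$, $(\beta;q)_{n+k+1}=(q;q)_{n+k+1}$ and $(\alpha;q)_n/(\gamma;q)_n=1$, which reproduces the Amdeberhan--Zeilberger $F$ and $G$ (the prefactor $-q/\gamma=-1$). For $\gamma=-q$ we get $(\alpha/\gamma)^k=(-1)^k$, $-q/\gamma=1$ and $(\gamma;q)_n=(-q;q)_n$, which gives the pair for $\operatorname{Ln}_q(2)$.
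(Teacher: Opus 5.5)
Your proposal is correct and follows the paper's proof essentially step for step. It uses the same term-ratio check of closedness (the paper merely abbreviates $X=\gamma q^n$, $Y=\alpha q^n$, $Z=\beta q^{k+1}/\alpha$) and the same telescoping for the two difference relations of $c(n,k)$. For the recurrence it uses the same scheme: $q$-Zeilberger on $b(n,k)$, then summation by parts to reduce $La_n$ to $\sum_k H(n,k)$, where
$H(n,k)=y_1(n)b(n+1,k)G(n,k)+b(n+2,k)\bigl(G(n,k)+G(n+1,k)\bigr)-F(n,k-1)T_b(n,k)$.
The paper then takes exactly your first-choice step, finding by Maple an antidifference $T_a$ with $H(n,k)=T_a(n,k+1)-T_a(n,k)$ and $T_a(n,0)=T_a(n,n+3)=0$. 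The only slip is your fallback remark, which you do not need: replacing $R$ by $R+\mathrm{const}$ changes $\Delta_k(Rb)$ by $\mathrm{const}\cdot\Delta_k b\neq0$, so the Zeilberger relation no longer holds.
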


\begin{proposition}\label{prop:target-series}
In addition to the hypotheses of \cref{thm:construction}, suppose that
$|\gamma|>1$.
Then
\[
\mathcal S_{\alpha,\beta,\gamma}(q)
:=\sum_{k=0}^{\infty}F_{\alpha,\beta,\gamma}(0,k)
=\frac{q}{\gamma(\beta-1)}
{}_2\phi_1\!\left(
\begin{matrix}
\alpha\beta^{-1}Q,\ Q\\
\beta^{-1}Q
\end{matrix};Q,\gamma^{-1}
\right),
\qquad Q=q^{-1},
\tag{1.3}
\]
converges absolutely.
\end{proposition}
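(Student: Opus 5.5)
We compute $F(0,k)$ directly from the definition in \cref{thm:construction}. Since $(\alpha;q)_0=(\gamma;q)_0=1$,
\[
F(0,k)=-\frac q\gamma\left(\frac\alpha\gamma\right)^k\frac{(\beta q/\alpha;q)_k}{(\beta;q)_{k+1}}
=-\frac{q}{\gamma(1-\beta)}\left(\frac\alpha\gamma\right)^k\frac{(\beta q/\alpha;q)_k}{(\beta q;q)_k},
\]
using $(\beta;q)_{k+1}=(1-\beta)(\beta q;q)_k$. Note that $-1/(1-\beta)=1/(\beta-1)$, which gives the prefactor $q/(\gamma(\beta-1))$. Condition (1.2) guarantees that $\beta\neq1$ and that $(\beta q;q)_k\neq0$ for all $k$.

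The plan is then to rewrite the ratio of Pochhammer symbols in base $Q=q^{-1}$. We use the identity
\[
(u;q)_k=(-u)^kq^{k(k-1)/2}(u^{-1};Q)_k,
\]
obtained by factoring $-uq^j$ out of each $1-uq^j$. Applying it to numerator and denominator, the powers $q^{k(k-1)/2}$ cancel and we get
\[
\frac{(\beta q/\alpha;q)_k}{(\beta q;q)_k}=\alpha^{-k}\frac{(\alpha\beta^{-1}Q;Q)_k}{(\beta^{-1}Q;Q)_k}.
\]
Multiplying by $(\alpha/\gamma)^k$ leaves $\gamma^{-k}$. Since $(Q;Q)_k/(Q;Q)_k=1$, inserting this factor gives exactly the $k$-th term of ${}_2\phi_1(\alpha\beta^{-1}Q,Q;\beta^{-1}Q;Q,\gamma^{-1})$. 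This term-by-term identity establishes (1.3) once convergence is known.

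For absolute convergence, we use the ratio test on $|F(0,k)|$. The ratio of consecutive terms is
\[
\left|\frac{\alpha}{\gamma}\cdot\frac{1-\beta q^{k+1}/\alpha}{1-\beta q^{k+1}}\right|.
\]
Since $|q|>1$, this tends to $|\alpha/\gamma|\cdot|1/\alpha|=1/|\gamma|<1$. Equivalently, in the $Q$-form the ratio $\frac{1-aQ^{k+1}}{1-bQ^{k+1}}\cdot\gamma^{-1}$ with $|Q|<1$ tends to $\gamma^{-1}$. All denominators are nonzero by (1.2), so the terms are well defined.

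This is a short verification, and the only real care point is the base-inversion bookkeeping: getting the signs and the powers of $\alpha$ right so that they cancel precisely against $(\alpha/\gamma)^k$.
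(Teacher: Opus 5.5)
Your proposal is correct and follows the paper's argument essentially step by step. The paper likewise computes $F(0,k)$, uses the ratio test with limit $1/\gamma$, converts to base $Q=q^{-1}$ via $(u;q)_k=(-u)^kq^{k(k-1)/2}(u^{-1};Q)_k$, and inserts $(Q;Q)_k/(Q;Q)_k$; your step $(\beta;q)_{k+1}=(1-\beta)(\beta q;q)_k$ just makes explicit the prefactor bookkeeping that the paper leaves implicit.
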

Here, we use the convention
\[
{}_2\phi_1\!\left(
\begin{matrix}a,b\\c\end{matrix};Q,z
\right)
:=\sum_{k=0}^{\infty}
\frac{(a;Q)_k(b;Q)_k}{(c;Q)_k(Q;Q)_k}z^k.
\]
In Sections 4--6, for three integral specializations of the parameters, we
prove $b_n\neq0$ and
\[
\lim_{n\to\infty}\frac{a_n}{b_n}
=\mathcal S_{\alpha,\beta,\gamma}(q)
\]
in a form adapted to each choice of parameters.
In Section 7, we show that the ratio $a_n/b_n$ arising from the
Coussement--Smet specialization agrees with their Pad\'e approximant.

\subsection{Symmetry}

\begin{proposition}\label{prop:symmetry}
Suppose that condition (1.2) holds for both $(\alpha,\beta,\gamma)$ and
$(\alpha,\gamma,\beta)$, and that $|\beta|,|\gamma|>1$.
Then Heine's second transformation gives
\[
\mathcal S_{\alpha,\beta,\gamma}(q)
=\mathcal S_{\alpha,\gamma,\beta}(q).
\tag{1.4}
\]

Next, let $\beta=q$ and define
\[
(\alpha,\gamma)
\longmapsto
(\alpha^*,\gamma^*)
:=\left(\frac{q^2}{\alpha},\frac{\gamma q}{\alpha}\right).
\]
If both the original and the starred parameters satisfy (1.2), and if
$|\gamma|>1$ and $|\gamma q/\alpha|>1$, then
\[
\mathcal S_{\alpha^*,q,\gamma^*}(q)
=\frac{\alpha\,\mathcal S_{\alpha,q,\gamma}(q)}
{q+(q-\alpha)\mathcal S_{\alpha,q,\gamma}(q)}.
\tag{1.5}
\]
Applying this transformation twice returns $(\alpha,\gamma)$.
Moreover, if $a_n,b_n$ denote the sequences associated with the original
parameters and $a_n^*,b_n^*$ those associated with the starred parameters,
then
\[
b_n+\left(1-\frac\alpha q\right)a_n
=\frac{(\gamma q/\alpha;q)_n}{(\gamma;q)_n}b_n^*.
\tag{1.6}
\]
\end{proposition}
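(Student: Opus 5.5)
The plan is to reduce all three statements to classical $q$-series identities in the base $Q=q^{-1}$, where $|Q|<1$. For (1.6) we then find a closed product form for the potential $c(n,k)$ when $\beta=q$.

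\emph{Step 1: (1.4).} By \cref{prop:target-series}, $\mathcal S_{\alpha,\beta,\gamma}(q)=\frac{q}{\gamma(\beta-1)}\,{}_2\phi_1(a,b;c;Q,z)$ with
\[
a=\alpha\beta^{-1}Q,\qquad b=Q,\qquad c=\beta^{-1}Q,\qquad z=\gamma^{-1}.
\]
Heine's second transformation reads
\[
{}_2\phi_1(a,b;c;Q,z)=\frac{(c/b;Q)_\infty(bz;Q)_\infty}{(c;Q)_\infty(z;Q)_\infty}\,{}_2\phi_1(abz/c,b;bz;Q,c/b).
\]
It is valid here because $|z|=|\gamma|^{-1}<1$ and $|c/b|=|\beta|^{-1}<1$. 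Here $c/b=\beta^{-1}$, $bz=\gamma^{-1}Q$ and $abz/c=\alpha\gamma^{-1}Q$. So the new ${}_2\phi_1$ is exactly the one attached to $(\alpha,\gamma,\beta)$. The ratio of infinite products telescopes to $(1-\beta^{-1})/(1-\gamma^{-1})$. Multiplying by $q/(\gamma(\beta-1))$ gives $q/(\beta(\gamma-1))$, which is the prefactor for $(\alpha,\gamma,\beta)$. Condition (1.2) for both triples ensures no denominator vanishes.

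\emph{Step 2: (1.5) and the involution.} For $\beta=q$ we have $b=Q$ and $c=Q^2$. Using $(Q;Q)_k/(Q^2;Q)_k=(1-Q)/(1-Q^{k+1})$ and writing $a=a'Q$ with $a'=\alpha Q$, the series collapses to
\[
{}_2\phi_1(a'Q,Q;Q^2;Q,z)=\frac{1-Q}{(1-a')z}\sum_{j\ge1}\frac{(a';Q)_j}{(Q;Q)_j}z^j.
\]
The $q$-binomial theorem then gives the closed form
\[
\mathcal S_{\alpha,q,\gamma}(q)=C\,(P-1),\qquad P=\frac{(\alpha Q/\gamma;Q)_\infty}{(\gamma^{-1};Q)_\infty},
\]
where $C$ is an explicit rational function of $\alpha,\gamma,q$. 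For the starred parameters, $a'^*=\alpha^*Q=q/\alpha$ and $z^*=1/\gamma^*=\alpha Q/\gamma$, so $a'^*z^*=\gamma^{-1}$ and hence $P^*=1/P$. Eliminating $P$ between $\mathcal S=C(P-1)$ and $\mathcal S^*=C^*(1/P-1)$ yields a Möbius relation. A short computation of $C,C^*$ should produce exactly (1.5). The involution is immediate: $\alpha^{**}=q^2/\alpha^*=\alpha$ and $\gamma^{**}=\gamma^*q/\alpha^*=\gamma$.

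\emph{Step 3: (1.6), the main obstacle.} Step 2 suggests that for $\beta=q$ the potential has product form. I expect
\[
1+\left(1-\frac\alpha q\right)c(n,k)=\pi(n,k)
\]
for an explicit ratio $\pi(n,k)$ of finite $q$-Pochhammer symbols. Its limit along $n=0$, $k\to\infty$ should be the finite analogue of $P$, since the partial sums $\sum_{m<k}F(0,m)$ telescope by the finite $q$-binomial identity underlying Step 2. To establish this, I would guess $\pi$ from small cases and verify three things: $\pi(0,0)=1$, and the two relations
\[
\pi(n,k+1)-\pi(n,k)=\left(1-\frac\alpha q\right)F(n,k),\qquad
\pi(n+1,k)-\pi(n,k)=\left(1-\frac\alpha q\right)G(n,k),
\]
each of which is a ratio check of hypergeometric terms. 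With this in hand, $b_n+(1-\alpha/q)a_n=\sum_k b(n,k)\pi(n,k)$. I would then compare $b(n,k)\pi(n,k)$ with $\frac{(\gamma q/\alpha;q)_n}{(\gamma;q)_n}b^*(n,k)$, hoping for a termwise identity, possibly after $k\mapsto n-k$. If the terms do not match, the fallback is to show that both sides satisfy the common three-term recurrence of \cref{thm:construction}, after adjusting for the prefactor, and agree for $n=0,1$. The hardest part is finding the correct closed form $\pi(n,k)$ and matching it against $b^*$.
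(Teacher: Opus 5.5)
Your Steps 1 and 2 are sound and match the paper. Heine's transformation produces exactly the prefactor $q/(\beta(\gamma-1))$. For $\beta=q$ you get $\mathcal S_{\alpha,q,\gamma}(q)=(P-1)/(1-\alpha/q)$ together with $P^*=1/P$, which gives (1.5). You divide by $1-\alpha/q$, so the case $\alpha=q$ must be set aside, but there the starred parameters coincide with the original ones.

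The gap is in Step 3, whose main route rests on a false premise: $\pi(n,k)=1+(1-\alpha/q)c(n,k)$ is not a ratio of finite $q$-Pochhammer symbols. Along $n=0$ it is the truncated $q$-binomial series $\sum_{i=0}^{k}\frac{(\alpha Q;Q)_i}{(Q;Q)_i}\gamma^{-i}$ with $Q=q^{-1}$. For generic $\alpha,\gamma$ these partial sums are not $q$-hypergeometric in $k$, since Gosper's algorithm fails. Step 2 used only the infinite $q$-binomial theorem, and no finite telescoping identity lies behind it. Concretely,
\[
\pi(0,1)=\frac{\gamma(q-1)+q-\alpha}{\gamma(q-1)},\qquad
\pi(1,0)=\frac{\gamma(q-1)+1-\alpha}{(\gamma-1)(q-1)}.
\]
Hence no termwise identity, reindexed or not, can hold. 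The term $b(1,0)\pi(1,0)=-\frac{\gamma(q-1)+1-\alpha}{(1-\alpha)(\gamma-1)}$ contains the irreducible factor $\gamma(q-1)+1-\alpha$. This factor occurs in neither of the pure products $R_1b^*(1,0)$ and $R_1b^*(1,1)$, even though the sums over $k$ do agree. So (1.6) is genuinely an identity between sums.

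Your fallback is the paper's actual proof (the lemma in \cref{sec:reciprocal-product-duality}), but as stated it leaves out the only nontrivial input. That $b_n+(1-\alpha/q)a_n$ satisfies the unstarred recurrence is automatic. That $R_nb_n^*$ does is not, because $b_n^*$ satisfies the starred recurrence. What you need is the gauge covariance
\[
y_1^*(n)=y_1(n)\frac{R_{n+1}}{R_{n+2}},\qquad
y_0^*(n)=y_0(n)\frac{R_n}{R_{n+2}},
\]
which the paper verifies symbolically (\cref{app:maple-duality}). You can check the $y_0$ half by hand from the displayed formula in \cref{prop:common-recurrence}. For $\beta=q$, the starred $\mathcal K_n$ equals $(q/\alpha)\mathcal K_n$. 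The remaining factors then turn $y_0^*(n)/y_0(n)$ into $\alpha^2(1-\gamma q^n)(1-\gamma q^{n+1})/\bigl((\alpha-\gamma q^{n+1})(\alpha-\gamma q^{n+2})\bigr)=R_n/R_{n+2}$. The $y_1$ half requires the explicit Zeilberger operator. You also need the checks at $n=0,1$, using the explicit $a_1,b_1$. Only with the covariance and these checks does ``both sides satisfy the same recurrence'' become a proof rather than a restatement of (1.6).
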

Thus, for rational parameters, the values approximated by the two
constructions are related by a linear fractional
transformation with rational coefficients; consequently, irrationality and
the irrationality measure are preserved.

In this paper, we use the symmetry relation (1.6) satisfied by the rational
approximations to sharpen the denominator estimates, thereby obtaining improved
upper bounds for the irrationality measures. This method goes back at least to
Hata's 1995 work \cite{hata1995beukers}, and was subsequently developed by Rhin
and Viola and by Zudilin
\cite{rhinviola1996permutation,rhinviola2001group,zudilin2002qzeta,zudilin2004heine,zudilin2004arithmetic}.

\subsection{Main irrationality results}

For a real irrational number $\xi$, define its irrationality measure by
\[
\mu(\xi)
:=\sup\left\{\mu>0:
0<\left|\xi-\frac PQ\right|<Q^{-\mu}
\text{ for infinitely many }(P,Q)\in\mathbb Z\times\mathbb Z_{>0}
\right\}.
\]

Let $d(n)$ denote the number of positive divisors of a positive integer $n$.
For a nonnegative integer $n$, let $b_4(n)$ denote the number of $4$-regular
partitions of $n$, that is, partitions using no part divisible by $4$, with
$b_4(0)=1$ corresponding to the empty partition.
For $|r|<1$, define the three functions studied in this paper by
\[
\begin{aligned}
\psi(r)
&:=\sum_{n=0}^{\infty}r^{n(n+1)/2},\\
\Delta(r)
&:=\sum_{m=0}^{\infty}d(2m+1)r^m
=\sum_{k=0}^{\infty}\frac{r^k}{1-r^{2k+1}},\\
B_4(r)
&:=\sum_{n=0}^{\infty}b_4(n)r^n
=\frac{(r^4;r^4)_\infty}{(r;r)_\infty}.
\end{aligned}
\]
The equality between the two series representations of $\Delta$
follows because the coefficient of $r^m$ on the right counts the
factorizations $2m+1=(2j+1)(2k+1)$.

\begin{theorem}\label{thm:main}
For every integer $x$ with $|x|\geq2$, the numbers $\psi(x^{-1})$,
$\Delta(x^{-1})$, and $B_4(x^{-1})$ are irrational, and
\[
\begin{aligned}
\mu\!\left(\psi(x^{-1})\right)
&\leq\frac{18}{7}=2.571428\ldots,\\
\mu\!\left(\Delta(x^{-1})\right)
&\leq\frac{18\pi^2}{7\pi^2-24}=3.940203\ldots,\\
\mu\!\left(B_4(x^{-1})\right)
&\leq3.
\end{aligned}
\]
\end{theorem}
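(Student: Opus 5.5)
We will prove \cref{thm:main} by Ap\'ery's method, using \cref{thm:construction}. The plan has four steps.

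\emph{Step 1: identify each target with some $\mathcal S_{\alpha,\beta,\gamma}(q)$.} We take $q=x$ and look for integral specializations such that $\mathcal S_{\alpha,\beta,\gamma}(q)$ equals, up to a M\"obius transformation with rational coefficients, $\psi(x^{-1})$, $\Delta(x^{-1})$ or $B_4(x^{-1})$.
\begin{itemize}
\item For $\Delta$ the Lambert series $\sum_k r^k/(1-r^{2k+1})$ suggests base $q^2$-type parameters, for instance $\beta=q$ and $\gamma=q^{1/2}$ after replacing $q$ by $x^2$. The Coussement--Smet specialization of Section~7 is presumably of this kind.
\item For $\psi$ and $B_4$ we will use the classical ${}_2\phi_1$ and Lambert-series expressions. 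One is $\psi(r)=(r^2;r^2)_\infty/(r;r^2)_\infty$. The other is a Heine/Rogers--Fine expression giving $\psi$ as a ratio of a ${}_2\phi_1$ with $c=bq$ to an infinite product, or equivalently a Lambert series in $\mathcal S$.
\item Infinite-product factors will be absorbed by passing to a logarithmic-derivative-free form. If that is not possible, we use \cref{prop:symmetry}, identity (1.5), to move to a parameter set in which $\mathcal S$ becomes rational in the target.
\end{itemize}
Since M\"obius maps with rational coefficients preserve $\mu$, it suffices to bound $\mu(\mathcal S)$ for the chosen parameters.

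\emph{Step 2: asymptotics.} We will derive the three-term recurrence of \cref{thm:construction} explicitly for each specialization. Its characteristic roots in $q$-scale give $b_n\asymp |q|^{\lambda_1 n^2}$ and $|b_n\mathcal S-a_n|\asymp |q|^{\lambda_2 n^2}$. We will obtain these either by a Poincar\'e--Perron type argument for $q$-recurrences, or directly:
\begin{itemize}
\item the dominant term of $b_n=\sum_k b(n,k)$ is located by maximizing the exponent $k(k+1)/2+\dots$ in $k$;
\item the error $\sum_k b(n,k)\bigl(\mathcal S-c(n,k)\bigr)$ is expressed as a tail sum of $F$ that is explicitly small.
\end{itemize}
Nonvanishing of the Casoratian, which is computable in closed form from the recurrence, will give $b_n\mathcal S-a_n\neq0$ for infinitely many $n$.

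\emph{Step 3: denominators.} The entries $b(n,k)$ involve $\binom nk_q$ and ratios of $q$-Pochhammer symbols, and $c(n,k)$ introduces the denominators $(\beta;q)_{n+k+1}$ and $(\gamma;q)_n$. We will show that $D_n a_n,\;D_n b_n\in\mathbb Z[q]$ for some $D_n$, a product of cyclotomic polynomials $\Phi_d(q)$ with $d\le 2n$ and controlled multiplicities. A naive estimate gives $\log|D_n|\sim \kappa n^2\log|q|$.

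The sharpening comes from (1.6). Both $b_n$ and $\bigl(\gamma q/\alpha;q\bigr)_n/(\gamma;q)_n\, b_n^*$ are integral up to the respective denominators, so the exponent of each $\Phi_d$ that is actually needed is the minimum of the two bounds. We will count these exponents via floor functions in the style of Hata, Rhin--Viola and Zudilin, e.g.\ $\lfloor 2n/d\rfloor-2\lfloor n/d\rfloor$. Using $\sum_{d}\varphi(d)\,g(n/d)\sim n^2\cdot(6/\pi^2)\int\cdots$, this gives $\log|D_n|\sim \kappa' n^2\log|q|$. The term $\pi^2$ in the bound for $\Delta$ arises from this density $6/\pi^2$, whereas for $\psi$ and $B_4$ the savings are presumably uniform and yield rational constants.

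\emph{Step 4: conclusion.} By the standard lemma, if $|P_n\xi-Q_n|\approx e^{-\sigma n^2}$ and $|Q_n|\approx e^{\tau n^2}$ with $P_n,Q_n\in\mathbb Z$ and $P_n\xi-Q_n\neq0$, then $\xi$ is irrational and $\mu(\xi)\le 1+\tau/\sigma$. We will then check that the constants produce $18/7$, $18\pi^2/(7\pi^2-24)$ and $3$. Negative $x$ are handled identically, since only $|q|$ enters the estimates.

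\emph{Main obstacle.} We expect the hardest part to be the arithmetic of Step 3: proving the precise cyclotomic valuation of $a_n$, using (1.6) to gain cancellation, and making sure the minimum is achieved exactly as claimed. A secondary difficulty is Step 1, finding exact parameter choices for $\psi$ and $B_4$.
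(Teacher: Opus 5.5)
Your outline has the right Ap\'ery-type skeleton, but each substantive step is either left open or would not work as you describe it.

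\textbf{Step 1.} No specialization is actually fixed, and the one you suggest for $\Delta$ is wrong. With $\beta=q$ you are in the product case $\mathcal S_{\alpha,q,\gamma}(q)=(P_{\alpha,\gamma}(q)-1)/(1-\alpha/q)$ when $\alpha\neq q$. For $\alpha=q$, $q=x^2$, $\gamma=x$ you get $\sum_{m\geq1}x^{-m}/(1-x^{-2m})$, which is not $\Delta(x^{-1})$. The paper takes $q=x^2$ and $(\alpha,\beta,\gamma)=(x,x^2,x)$, $(x^2,x,x)$, $(-x,x^2,x)$. Here $\Delta(x^{-1})=\ell_{x^2}(x,x)$ needs $\alpha=q$ and $\beta=\gamma=x$. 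For $\psi$ and $B_4$, the $q$-binomial theorem at $\beta=q$ directly yields $(x^{-2};x^{-2})_\infty/(x^{-1};x^{-2})_\infty=\psi(x^{-1})$ and $(-x^{-2};x^{-2})_\infty/(x^{-1};x^{-2})_\infty=B_4(x^{-1})$. No Heine/Rogers--Fine or logarithmic-derivative manipulation is needed.

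\textbf{Step 2.} The ``explicitly small tail'' route fails. With $\rho=|x|$, the summands $b(n,n-j)\bigl(\mathcal S-c(n,n-j)\bigr)$ have size $\rho^{-3n-j^2+j}$. The triangle inequality therefore only gives $|b_n\mathcal S-a_n|\leq\rho^{-3n+O(1)}$, which is useless after multiplying by a denominator of size $\rho^{Cn^2}$. The true size $\rho^{-3n^2-4n}$ (resp.\ $\rho^{-3n^2-3n}$ for $\Delta$) reflects massive cancellation. The paper extracts it from the Casoratian: $W_n=W_0\prod_{j<n}y_0(j)$ with the explicit $y_0$ satisfying $|y_0(n)|\to\rho^3$ (resp.\ $\rho^2$). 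Then $a_n/b_n-a_{n+1}/b_{n+1}=W_n/(b_nb_{n+1})$, and the telescoping tail is dominated by its first term. You invoke the Casoratian only for nonvanishing, but it is what gives the size of the linear form. A Poincar\'e--Perron argument would have to reproduce exactly this; the direct route cannot.

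\textbf{Step 3 (the central gap).} As you state it, the use of (1.6) is misdescribed. Since $(1-\alpha/q)a_n=R_nb_n^*-b_n$, clearing $a_n$ requires clearing both weight sums, which is an lcm, not a minimum. The real gain is that $a_n$ is thereby written through two sums of pure weights, so the large denominators of the potential $c(n,k)$ never have to be controlled.
\begin{itemize}
\item Concretely, for $\psi$ and $B_4$ one multiplies by $I_n=(X;X^2)_n(\pm X;X^2)_{n+1}/(X^2;X^2)_n$, of degree $n^2+n+1$ (upper sign for $\psi$, lower for $B_4$).
\item The identity $(\pm X^3;X^2)_k=(\pm X;X^2)_{k+1}/(1\mp X)$ makes $I_nb(n,k)$ and $I_nR_nb^*(n,k)$ equal to $(1\mp X)$ times polynomials. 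This is exactly what permits division by $1-\alpha/q=1\mp X^{-1}$.
\item For $\psi$, a residue count shows that all these polynomials are divisible by $C_n=\prod_{d\ \mathrm{odd}}\Phi_d(X)^{\lfloor n/d\rfloor}$, of degree $n^2/3+O(n)$. This gives $(\kappa,\lambda)=(11/3,7/3)$.
\item For $B_4$ there is no cyclotomic saving, and $(\kappa,\lambda)=(4,2)$.
\end{itemize}

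\textbf{The case $\Delta$.} Identity (1.6) requires $\beta=q$, so it is unavailable here. The paper instead splits $c(n,k)$ along the path $(0,0)\to(0,k)\to(n,k)$.
\begin{itemize}
\item The $F$-part $\sum_{j=1}^{k}X^j/(X^{2j-1}-1)$ needs $\prod_{d\leq2n-1,\ d\ \mathrm{odd}}\Phi_d(X)$, of degree $8n^2/\pi^2+O(n\log n)$. This lcm, not a floor-function saving, is where $\pi^2$ comes from.
\item The $G$-part $\sum_kb(n,k)G(m-1,k)$ collapses, by two applications of the $q$-binomial theorem, to a polynomial times $X^{2m}(X^2;X^2)_{m-1}/(X;X^2)_m$.
\item The weight needs $\prod_e\Phi_{2e}(X)^{\lfloor n/e\rfloor}$, of degree $2n^2/3+O(n)$.
\end{itemize}
Together these give $\kappa=11/3+8/\pi^2$ and $\lambda=7/3-8/\pi^2$.

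These constructions are the actual content of the proof. Without them the constants $18/7$, $18\pi^2/(7\pi^2-24)$ and $3$ cannot simply be ``checked''.
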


These values are obtained, respectively, by the specializations $(q,\alpha,\beta,\gamma)=(x^2,x,x^2,x)$, $(x^2,x^2,x,x)$, and $(x^2,-x,x^2,x)$:
\[
\mathcal S_{x,x^2,x}(x^2)=\frac{\psi(x^{-1})-1}{1-x^{-1}},\qquad
\mathcal S_{x^2,x,x}(x^2)=\Delta(x^{-1}),\qquad
\mathcal S_{-x,x^2,x}(x^2)=\frac{B_4(x^{-1})-1}{1+x^{-1}}.
\]
The result for $\psi(r)$ improves the bound obtained from a result of Bundschuh~\cite{bundschuh1974tschakaloff}:
\[
\mu\!\left(\psi(x^{-1})\right)
\leq\frac{3+\sqrt5}{2}
=2.618033\ldots.
\]
The result for $\Delta(r)$ improves the bound obtained from a result of Bundschuh--Zudilin~\cite{bundschuhzudilin2008rational}:
\[
\mu\!\left(\Delta(x^{-1})\right)\leq6.
\]
Finally, the result for $B_4$ improves the bound obtained from a result of 
Rochev~\cite{rochev2010linear}:
\[
\mu\!\left(B_4(x^{-1})\right)
\leq3+\sqrt6
=5.449489\ldots.
\]
Nesterenko's theorem~\cite{nesterenko1996modular} implies that
$\psi(x^{-1})$ and $B_4(x^{-1})$ are in fact transcendental for every
integer $x$ with $|x|\geq2$.

\begin{remark}
    For an integer $x\geq 2$, the special value $\psi(x^{-1})$ of Ramanujan's theta function is equal to the number obtained by concatenating $1,10,100,1000,\dots$ in order in base $x$:
    \[
    \psi(x^{-1})=\bigl(1.101001000100001\ldots\bigr)_{(x)}.
    \]
    Our \cref{thm:main} shows that the irrationality measure of this number is at most $18/7$.
\end{remark}
\subsection{The Coussement--Smet approximants}

Coussement--Smet~\cite{coussementsmet2009irrationality} constructed Pad\'e
approximants to the series
\[
h^\pm(q_1,q_2)
:=\sum_{m=1}^{\infty}\frac{q_1^m}{1\pm q_2^m}
\]
for $q_1\in(0,1)\cap\mathbb Q$ and $q_2=p_2^{-1}$, where
$p_2\in\mathbb Z_{\geq2}$.
In particular, for an integer $p>1$ and relatively prime positive integers
$r_1,r_2$, setting $q_1=p^{-r_1}$ and $q_2=p^{-r_2}$ yields
\[
\mu\!\left(h^-(p^{-r_1},p^{-r_2})\right)
\leq m^-(r_2).
\]
Here $m^-(r_2)$ is the explicit constant defined in equation (1.14) of their
paper and depends only on the second exponent $r_2$ after reduction.

Set $p_1:=p^{r_1}$ and $p_2:=p^{r_2}$.
Specializing the three-parameter construction to
$(q,\alpha,\beta,\gamma)=(p_2,p_2,p_1,p_2)$
gives
\[
\mathcal S_{p_2,p_1,p_2}(p_2)
=h^-(p_1^{-1},p_2^{-1}).
\]
Let $R_{n,N}$ denote the approximant for which the degree of the
Coussement--Smet Pad\'e polynomial is $n$ and the index of the evaluation
point is $N$.
In Section 7, we prove that
\[
\frac{a_n}{b_n}=R_{n,n+1}.
\]
Thus, when the indices are matched by taking $N=n+1$, the rational
approximants before clearing denominators coincide.
Multiplying the numerator and denominator of this fraction by the factor used
by Coussement--Smet and applying their asymptotic estimates also recovers the
irrationality-measure bound above.

\subsection{Summary of results}

In the first three rows we assume that $x$ is an integer with $|x|\geq2$;
in the last row we assume that $p>1$ is an integer and $r_1,r_2$ are
relatively prime positive integers.

\begin{center}
%\small
\setlength{\tabcolsep}{2pt}
\begin{tabularx}{\textwidth}{@{}>{\raggedright\arraybackslash}X >{\raggedright\arraybackslash}X >{\raggedright\arraybackslash}X >{\raggedright\arraybackslash}X >{\raggedright\arraybackslash}X@{}}
\toprule
$(q,\alpha,\beta,\gamma)$ & Target & Previous bound & Bound in this paper & Comparison \\
\midrule
$(x^2,x,x^2,x)$ & $\psi(x^{-1})$ & $(3+\sqrt5)/2$ \cite{bundschuh1974tschakaloff} & $18/7$ & Improved \\
$(x^2,x^2,x,x)$ & $\Delta(x^{-1})$ & $6$ \cite{bundschuhzudilin2008rational} & $18\pi^2/(7\pi^2-24)$ & Improved \\
$(x^2,-x,x^2,x)$ & $B_4(x^{-1})$ & $3+\sqrt6$ \cite{rochev2010linear} & $3$ & Improved \\
$(p^{r_2},p^{r_2},p^{r_1},p^{r_2})$ & $h^-(p^{-r_1},p^{-r_2})$ & $m^-(r_2)$~\cite{coussementsmet2009irrationality} & $m^-(r_2)$ & Recovered \\
\bottomrule
\end{tabularx}
\end{center}

In \cref{sec:construction}, we prove
\cref{thm:construction,prop:target-series,prop:symmetry}.
\cref{sec:tools} prepares a general lemma for estimating irrationality measures,
and \cref{sec:psi,sec:delta,sec:b4} treat in turn the three specializations in
\cref{thm:main}.
In \cref{sec:coussement-smet}, we prove the identification with the Coussement--Smet Pad\'e
approximants.
\subsection*{Acknowledgement}
The authors would like to thank Professor Wadim Zudilin for bringing several relevant references to their attention and for his helpful comments on the previous results related to this work.
\subsection*{Human--AI collaboration}
This research originated in the second author's discovery of a
two-parameter $q$-WZ pair and was subsequently developed through extensive
interaction between the authors and OpenAI's GPT-5.6 Sol.
Several key ideas used in the proof of \cref{thm:main} emerged from these
interactions.
We regard this work as an instance of human--AI collaboration, rather than
autonomous research conducted by AI: the authors directed the investigation,
evaluated the AI-generated suggestions, and independently verified all
mathematical arguments and references.
The authors reviewed and edited all AI-assisted content and take full
responsibility for the paper.

\section{Construction of a three-parameter \texorpdfstring{$q$-WZ pair}{q-WZ pair}}\label{sec:construction}

In this section, we construct a three-parameter family containing the two $q$-WZ pairs of Amdeberhan--Zeilberger.
By combining a potential obtained from a closed discrete $1$-form with a hypergeometric weight, we establish a common second-order recurrence and its relation to the series being approximated.

\subsection{Definitions and assumptions}

As in \cref{sec:introduction}, assume that $|q|>1$, $\alpha\beta\gamma\neq0$, and
\[
\alpha,\ \beta,\ \gamma,\ \frac{\beta q}{\alpha}
\notin q^{-\mathbb Z_{\geq0}}.
\]
Under these conditions, the finite $q$-products and the factors $\gamma q^n-1$ appearing below do not vanish.
Set
\[
\begin{aligned}
\omega_{\alpha,\beta,\gamma}
&=F_{\alpha,\beta,\gamma}(n,k)\,\delta k
+G_{\alpha,\beta,\gamma}(n,k)\,\delta n,\\
F_{\alpha,\beta,\gamma}(n,k)
&=-\frac q\gamma\left(\frac\alpha\gamma\right)^k
\frac{(\beta q/\alpha;q)_k(\alpha;q)_n}
{(\beta;q)_{n+k+1}(\gamma;q)_n},\\
G_{\alpha,\beta,\gamma}(n,k)
&=\frac{\gamma q^n}{\gamma q^n-1}
F_{\alpha,\beta,\gamma}(n,k).
\end{aligned}
\]
The $q$-WZ pairs of Amdeberhan--Zeilberger for $\zeta_q(1)$ and $\operatorname{Ln}_q(2)$ correspond to $(\alpha,\beta,\gamma)=(q,q,q)$ and $(q,q,-q)$, respectively.
In what follows, we omit the subscripts $\alpha,\beta,\gamma$ and write $\omega$, $F(n,k)$, and $G(n,k)$.

\begin{lemma}
    The discrete $1$-form $\omega$ is closed:
    \[
    F(n+1,k)-F(n,k)=G(n,k+1)-G(n,k).
    \]
\end{lemma}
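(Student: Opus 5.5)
The plan is to verify the identity directly by dividing through by $F(n,k)$, so that everything reduces to a polynomial identity in a few auxiliary quantities. Condition (1.2) ensures that $F(n,k)$ and all the factors $1-\beta q^{j}$, $1-\gamma q^{j}$ that appear are nonzero. I first record the two hypergeometric ratios, which follow immediately from the definition of $F$:
\[
\frac{F(n+1,k)}{F(n,k)}=\frac{1-\alpha q^n}{(1-\beta q^{n+k+1})(1-\gamma q^n)},\qquad
\frac{F(n,k+1)}{F(n,k)}=\frac{\alpha}{\gamma}\cdot\frac{1-\beta q^{k+1}/\alpha}{1-\beta q^{n+k+1}}.
\]
For the first ratio, $(\beta;q)_{n+k+2}/(\beta;q)_{n+k+1}=1-\beta q^{n+k+1}$ and $(\gamma;q)_{n+1}/(\gamma;q)_n=1-\gamma q^n$. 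For the second, $(\beta q/\alpha;q)_{k+1}/(\beta q/\alpha;q)_k=1-\beta q^{k+1}/\alpha$.

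Since $G(n,k)=\frac{\gamma q^n}{\gamma q^n-1}F(n,k)$ with a prefactor independent of $k$, the right-hand side equals
\[
\frac{\gamma q^n}{\gamma q^n-1}\bigl(F(n,k+1)-F(n,k)\bigr).
\]
I then divide both sides by $F(n,k)$ and multiply by $(1-\beta q^{n+k+1})(1-\gamma q^n)$. To organise the algebra I set $a=\alpha q^n$, $B=\beta q^{n+k+1}$ and $c=\gamma q^n$.

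The left-hand side becomes
\[
(1-a)-(1-B)(1-c)=-a+B+c-Bc.
\]
The right-hand side becomes
\[
-c\Bigl(\tfrac{\alpha}{\gamma}-\tfrac{\beta q^{k+1}}{\gamma}-(1-B)\Bigr).
\]
The key observations are $c\cdot\alpha/\gamma=a$ and $c\cdot\beta q^{k+1}/\gamma=B$. With these, the right-hand side simplifies to $-a+B+c-cB$, which matches the left-hand side.

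There is no real obstacle: the only care needed is tracking signs, in particular that $\gamma q^n/(\gamma q^n-1)$ equals $-c/(1-c)$, and checking that every division is legitimate under (1.2).
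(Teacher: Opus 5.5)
Your proof is correct and is essentially the paper's argument. The paper also divides by $F(n,k)$, uses the same two hypergeometric ratios, and checks that the resulting rational expression vanishes. It works in the variables $X=\gamma q^n$, $Y=\alpha q^n$, $Z=\alpha^{-1}\beta q^{k+1}$, so your $a,B,c$ are its $Y,YZ,X$. One small omission: your justification of the first ratio should also cite $(\alpha;q)_{n+1}/(\alpha;q)_n=1-\alpha q^n$, though the formula you wrote is right.
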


\begin{proof}
Introduce the three variables
\[
X=\gamma q^n,
\qquad
Y=\alpha q^n,
\qquad
Z=\alpha^{-1}\beta q^{k+1}.
\]
The definitions give
\[
\frac{F(n+1,k)}{F(n,k)}
=\frac{1-Y}{(1-X)(1-YZ)},
\]
\[
\frac{F(n,k+1)}{F(n,k)}
=\frac YX\frac{1-Z}{1-YZ},
\qquad
\frac{G(n,k)}{F(n,k)}
=\frac X{X-1}.
\]
Therefore
\[
\begin{aligned}
&\frac{F(n+1,k)-F(n,k)-G(n,k+1)+G(n,k)}{F(n,k)}\\
&=\frac{1-Y}{(1-X)(1-YZ)}
-1
+\frac{Y(1-Z)}{(1-X)(1-YZ)}
-\frac X{1-X}
=0,
\end{aligned}
\]
and hence
$F(n+1,k)-F(n,k)=G(n,k+1)-G(n,k)$
for all $n,k\geq 0$.
\end{proof}

\subsection{The weight and potential}

We define a weight $b(n,k)$ by
\[
b(n,k)
=q^{k(k+1)/2}
\left(-\frac\gamma\alpha\right)^k
\binom nk_q
\frac{(\beta;q)_{n+k}}
{(\beta q/\alpha;q)_k(\alpha;q)_n}.
\]
If $\varepsilon\in\{1,-1\}$ and $(\alpha,\beta,\gamma)=(q,q,\varepsilon q)$, then
\[
b(n,k)
=(-\varepsilon)^kq^{k(k+1)/2}
\frac{(q;q)_{n+k}}
{(q;q)_k^2(q;q)_{n-k}},
\]
which gives the two weights used by Amdeberhan--Zeilberger.

With the convention that an empty sum is zero, define a potential of $\omega$ by
\[
c(n,k)
:=\sum_{m=1}^{n}G(m-1,0)
+\sum_{m=1}^{k}F(n,m-1).
\]
In particular, $c(0,0)=0$.
Substitution of the individual terms gives
\[
c(n,k)
=\sum_{m=1}^{n}
\frac{q^m(\alpha;q)_{m-1}}
{(\beta;q)_m(\gamma;q)_m}
-\frac q\gamma\sum_{m=1}^{k}
\left(\frac\alpha\gamma\right)^{m-1}
\frac{(\beta q/\alpha;q)_{m-1}(\alpha;q)_n}
{(\beta;q)_{n+m}(\gamma;q)_n}.
\]
The definition and the relation proved above imply
\[
c(n,k+1)-c(n,k)=F(n,k),
\qquad
c(n+1,k)-c(n,k)=G(n,k).
\]
Define the sequences $a_n,b_n$ by
\[
a_n
:=\sum_{k=0}^{n}c(n,k)b(n,k),
\qquad
b_n
:=\sum_{k=0}^{n}b(n,k).
\]

\subsection{A common second-order recurrence}

\begin{proposition}\label{prop:common-recurrence}
When $q,\alpha,\beta,\gamma$ are treated as indeterminates, $a_n$ and $b_n$ satisfy the common second-order recurrence
\[
u_{n+2}+y_1(n)u_{n+1}+y_0(n)u_n=0,
\]
where $y_0(n),y_1(n)$ are rational functions of $q^n,q,\alpha,\beta,\gamma$.
\end{proposition}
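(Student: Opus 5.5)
We follow the Apéry–Zeilberger scheme: find a recurrence for the weight sums by $q$-Zeilberger creative telescoping, then transfer it to $a_n$ using the potential $c(n,k)$.

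\emph{Step 1: recurrence for $b_n$.} The summand $b(n,k)$ is $q$-hypergeometric in both $n$ and $k$. The ratios $b(n+1,k)/b(n,k)$ and $b(n,k+1)/b(n,k)$ are rational in $q^n,q^k,q,\alpha,\beta,\gamma$. Apply the $q$-Zeilberger algorithm with ansatz of order two. We seek $y_0(n),y_1(n)$ and a rational certificate $R(n,k)$ with
\[
b(n+2,k)+y_1(n)b(n+1,k)+y_0(n)b(n,k)=B(n,k+1)-B(n,k),
\qquad B(n,k)=R(n,k)b(n,k).
\]
We normalise $R$ so that $B(n,0)=0$. We also need $B(n,k)=0$ for $k\geq n+3$, which follows from the factor $1/(q;q)_{n-k}$ (the $q$-binomial) inherited by $B$. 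Summing over $k$ then gives $b_{n+2}+y_1b_{n+1}+y_0b_n=0$. Gosper's equation in the variable $q^k$ is linear, so $R$, $y_0$, $y_1$ can be found explicitly. They can then be verified by dividing the telescoping identity by $b(n,k)$, which reduces it to a polynomial identity in $q^k$. Existence of an order-two relation is the expected outcome, since the specialisations $(q,q,\pm q)$ reduce to the Amdeberhan–Zeilberger weights, which have second-order recurrences.

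\emph{Step 2: transfer to $a_n$.} Write
\[
a_{n+2}+y_1a_{n+1}+y_0a_n=\sum_k c(n,k)\bigl[b(n+2,k)+y_1b(n+1,k)+y_0b(n,k)\bigr]+E_n,
\]
where $E_n$ collects the terms $(c(n+j,k)-c(n,k))b(n+j,k)$ for $j=1,2$. By the second difference relation,
\[
c(n+1,k)-c(n,k)=G(n,k),\qquad c(n+2,k)-c(n,k)=G(n,k)+G(n+1,k).
\]
In the first sum, substitute the telescoping identity and use summation by parts together with $c(n,k+1)-c(n,k)=F(n,k)$. This turns it into $-\sum_k F(n,k)B(n,k+1)$ plus boundary terms, and the boundary terms vanish by the support and normalisation properties of $B$. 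So everything reduces to showing
\[
\sum_k\Bigl[G(n,k)\bigl(b(n+1,k)+b(n+2,k)\bigr)+G(n+1,k)b(n+2,k)+y_1\ldots\Bigr]-\sum_kF(n,k)B(n,k+1)=0.
\]
The key observation is that $F(n,k)b(n,k)$ is a very simple product. The factors $(\beta q/\alpha;q)_k$ and $(\alpha/\gamma)^k$ cancel against $b$, leaving roughly
\[
(-1)^kq^{k(k+1)/2}\binom nk_q/(1-\beta q^{n+k})
\]
times a function of $n$ only. Hence the remaining sum is a $q$-hypergeometric sum, and it should be zero by a second telescoping (Gosper) certificate. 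One might expect a finite $q$-binomial sum of $q$-Chu–Vandermonde type instead, but the paper's claim that the recurrence is the same forces it to be zero. Conceptually, this is Zeilberger's observation that a WZ-potential times a weight satisfying the recurrence yields the same recurrence provided the defect sum is a telescoping sum.

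\emph{Expected obstacle.} The main difficulty is Step 2's defect: verifying that the extra sum involving $F\cdot B$ and $G\cdot b$ cancels exactly, rather than producing a nonzero inhomogeneous term. I would first compute explicitly the certificate $R$ from Step 1 and the simplified products $F(n,k)b(n+j,k)$. I would then exhibit a rational $S(n,k)$ such that the defect summand equals $S(n,k+1)b'(n,k+1)-S(n,k)b'(n,k)$, with $b'$ the simplified product, vanishing at $k=0$ and $k=n+3$. If a direct Gosper computation is unwieldy, I would fall back on checking the recurrence for $a_n$ at enough small $n$ symbolically. Since both sides are rational in $q^n$ of bounded degree, the identity can then be certified by a degree-count argument.
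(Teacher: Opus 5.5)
Your plan is essentially the paper's proof. The paper applies $q$-Zeilberger to $b(n,k)$ to get $L=y_0(n)+y_1(n)N+N^2$ with certificate $T_b$. Applying $L$ to $c(n,k)b(n,k)$ and using the two difference relations leaves the defect $H(n,k)=y_1(n)b(n+1,k)G(n,k)+b(n+2,k)\bigl(G(n,k)+G(n+1,k)\bigr)-F(n,k-1)T_b(n,k)$, and the paper verifies in Maple that this is Gosper-summable, $H(n,k)=T_a(n,k+1)-T_a(n,k)$, with $T_b,T_a$ vanishing at $k=0$ and $k=n+3$.

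Three caveats. The second certificate has to be actually computed: your remark that the paper's claim ``forces'' the defect to vanish is circular. Your small-$n$ fallback with a degree count fails, because $a_n$ is not a rational function of $q^n$ of bounded degree. Finally, the $j=1$ term of your $E_n$ must carry the coefficient $y_1(n)$.
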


\begin{proof}
Define the shift operator $N$ by $Nf(n,k)=f(n+1,k)$, with coefficients acting from the left.
Applying \texttt{QDifferenceEquations:-Zeilberger} in Maple 2026~\cite{maple2026} to $b(n,k)$ and dividing the entire operator by the coefficient of $N^2$ yields
\[
L=y_0(n)+y_1(n)N+N^2
\]
and a term $T_b(n,k)$ satisfying
\[
Lb(n,k)=T_b(n,k+1)-T_b(n,k)
\]
as a rational function of $q^n,q^k,q,\alpha,\beta,\gamma$.
Here, we use the same notation $L$ for sequences by setting $Lu_n:=y_0(n)u_n+y_1(n)u_{n+1}+u_{n+2}$.
The coefficient $y_0(n)$, which will be used later, is given by
\[
\begin{aligned}
\mathcal K_n
:={}&\beta\gamma q^{3n+2}
-(\alpha+\beta+\gamma)q^{n+1}
+\alpha+q,\\
y_0(n)
={}&q\,
\frac{(q^{n+1}-1)(\beta q^n-1)(\gamma q^{n+1}-\alpha)\mathcal K_{n+1}}
{(\alpha q^{n+1}-1)(\beta q^{n+2}-\alpha)
(\gamma q^{n+1}-1)\mathcal K_n}.
\end{aligned}
\]
When $L$ is applied to the product $c(n,k)b(n,k)$ and the two difference relations for the potential are used, the remaining term is
\[
\begin{aligned}
H(n,k)
:={}&y_1(n)b(n+1,k)G(n,k)\\
&+b(n+2,k)\bigl(G(n,k)+G(n+1,k)\bigr)\\
&-F(n,k-1)T_b(n,k).
\end{aligned}
\]
For $k=0$, the last product is understood to be zero.
Applying the same algorithm to $H(n,k)$ gives a relation of order $0$ in the $n$-shift, with coefficient $1$,
\[
H(n,k)=T_a(n,k+1)-T_a(n,k).
\]
Consequently,
\[
\begin{aligned}
L\bigl(c(n,k)b(n,k)\bigr)
={}&\bigl(c(n,k)T_b(n,k+1)+T_a(n,k+1)\bigr)\\
&-\bigl(c(n,k-1)T_b(n,k)+T_a(n,k)\bigr).
\end{aligned}
\]
The two terms occurring on the right satisfy
\[
T_b(n,0)=T_a(n,0)=T_b(n,n+3)=T_a(n,n+3)=0.
\]
Extending $b(n,k)$ and $c(n,k)b(n,k)$ by zero for $k<0$ or $k>n$ and summing the last two identities over $k=0,\ldots,n+2$, we obtain
\[
Lb_n=0,
\qquad
La_n=0.
\]

The expanded expressions for $y_1,T_b,T_a$ and the identities above can be generated by Maple from the same input.
The computation symbolically verifies the two identities, the displayed expression for $y_0(n)$, the order of the operator, the nonvanishing of the coefficients, and the vanishing of the four terms at $k=0,n+3$.
Archival details for the complete input and saved output are given in
\cref{app:maple}.
\end{proof}

\subsection{The target series}

Setting $n=0$ gives
\[
F(0,k)
=-\frac q\gamma\left(\frac\alpha\gamma\right)^k
\frac{(\beta q/\alpha;q)_k}{(\beta;q)_{k+1}}.
\]
The ratio of consecutive terms is
\[
\frac{F(0,k+1)}{F(0,k)}
=\frac\alpha\gamma
\frac{1-\beta q^{k+1}/\alpha}
{1-\beta q^{k+1}}
\longrightarrow\frac1\gamma.
\]
Thus, when $|\gamma|>1$,
\[
\mathcal S_{\alpha,\beta,\gamma}(q)
:=\sum_{k=0}^{\infty}F(0,k)
\]
converges absolutely.
Put $Q=q^{-1}$ and use
\[
(u;q)_k=(-u)^kq^{k(k-1)/2}(u^{-1};Q)_k
\]
to obtain
\[
F(0,k)
=\frac{q}{\gamma(\beta-1)}
\frac{(\alpha\beta^{-1}Q;Q)_k}
{(\beta^{-1}Q;Q)_k}\gamma^{-k}.
\]
Inserting $(Q;Q)_k/(Q;Q)_k$ in each term and using the convention defined in Section~1 gives
\[
\mathcal S_{\alpha,\beta,\gamma}(q)
=\frac{q}{\gamma(\beta-1)}
{}_2\phi_1\!\left(
\begin{matrix}
\alpha\beta^{-1}Q,\ Q\\
\beta^{-1}Q
\end{matrix};Q,\gamma^{-1}
\right).
\]

Since the discrete $1$-form $\omega$ is closed, we have, for $N,K>0$,
\[
\sum_{j=0}^{K-1}F(0,j)
+\sum_{m=0}^{N-1}G(m,K)
=\sum_{j=0}^{K-1}F(N,j)
+\sum_{m=0}^{N-1}G(m,0).
\]
By our assumptions for the parameters, $(u^{-1};Q)_r$ is bounded and bounded away from zero as $r\geq0$ varies for every $u\in\{\alpha,\beta,\gamma,\beta q/\alpha\}$.
Hence there exist $C,A>0$, depending only on the parameters, such that
\[
|G(m,K)|
\leq C|\gamma|^{-K}A^m
|q|^{-m(m+1)/2-mK}.
\]
Applying the same transformation formula to $F(N,j)$, for fixed $K$ we have
\[
\sum_{j=0}^{K-1}F(N,j)\longrightarrow0
\qquad(N\to\infty),
\]
while, under $|\gamma|>1$,
\[
\sum_{m=0}^{\infty}|G(m,K)|=O\!\left(|\gamma|^{-K}\right)
\qquad(K\to\infty).
\]
Letting first $N\to\infty$ and then $K\to\infty$, we obtain
\[
\mathcal S_{\alpha,\beta,\gamma}(q)
=\sum_{m=0}^{\infty}G(m,0),
\qquad
G(m,0)
=\frac{q^{m+1}(\alpha;q)_m}
{(\beta;q)_{m+1}(\gamma;q)_{m+1}}.
\]
Taking $(N,K)=(n,k)$ in the same identity gives, for all $n,k\geq0$,
\[
c(n,k)
=\sum_{j=0}^{k-1}F(0,j)
+\sum_{m=0}^{n-1}G(m,k),
\]
and therefore
\[
\mathcal S_{\alpha,\beta,\gamma}(q)-c(n,k)
=\sum_{m=n}^{\infty}G(m,k).
\]
In particular,
\[
\mathcal S_{\alpha,\beta,\gamma}(q)b_n-a_n
=\sum_{k=0}^{n}b(n,k)
\sum_{m=n}^{\infty}G(m,k).
\]
Convergence of the quotient $a_n/b_n$ requires, in addition, that $b_n\neq0$ and an estimate for the size of the right-hand side.
In Sections~4--6, we prove these facts separately for three integer specializations of the parameters.
In Section~7, we show that the resulting specialized quotients $a_n/b_n$ agree with known Pad\'e approximants.

\subsection{Symmetry in \texorpdfstring{$\beta$ and $\gamma$}{beta and gamma}}

Assume that the conditions hold for both $(\alpha,\beta,\gamma)$ and $(\alpha,\gamma,\beta)$, and that $|\beta|,|\gamma|>1$.
Since the right-hand side of the representation
\[
\mathcal S_{\alpha,\beta,\gamma}(q)
=\sum_{m=0}^{\infty}
\frac{q^{m+1}(\alpha;q)_m}
{(\beta;q)_{m+1}(\gamma;q)_{m+1}}
\]
obtained above is symmetric in $\beta$ and $\gamma$, we have
\[
\mathcal S_{\alpha,\beta,\gamma}(q)
=\mathcal S_{\alpha,\gamma,\beta}(q).
\]
The same identity also follows from Heine's second transformation formula.
Indeed, under $|Q|<1$ and $|\beta^{-1}|,|\gamma^{-1}|<1$,
\[
\mathcal S_{\alpha,\beta,\gamma}(q)
=\frac{q}{\gamma(\beta-1)}
\frac{(\beta^{-1};Q)_\infty(\gamma^{-1}Q;Q)_\infty}
{(\beta^{-1}Q;Q)_\infty(\gamma^{-1};Q)_\infty}
{}_2\phi_1\!\left(
\begin{matrix}
\alpha\gamma^{-1}Q,\ Q\\
\gamma^{-1}Q
\end{matrix};Q,\beta^{-1}
\right).
\]
Since
\[
\frac{(\beta^{-1};Q)_\infty}{(\beta^{-1}Q;Q)_\infty}
\frac{(\gamma^{-1}Q;Q)_\infty}{(\gamma^{-1};Q)_\infty}
=\frac{\beta-1}{\beta}\frac{\gamma}{\gamma-1},
\]
the right-hand side equals $\mathcal S_{\alpha,\gamma,\beta}(q)$.

\subsection{The case \texorpdfstring{$\alpha=q$}{alpha=q}}

Bundschuh--Zudilin~\cite{bundschuhzudilin2008rational} defined the generalized $q$-logarithm by
\[
\ell_q(u,v)
:=u\sum_{n=1}^{\infty}
\frac{v^n}{q^n-u}.
\]
Setting $\alpha=q$ and assuming $|\gamma|>1$, we obtain
\[
\mathcal S_{q,\beta,\gamma}(q)
=\sum_{k=0}^{\infty}
\frac{(q/\gamma)^{k+1}}
{\beta q^k-1}
=\ell_q\!\left(\frac q\beta,\frac q\gamma\right).
\]
If, in addition, $|\beta|>1$, then both series converge absolutely, and the result of the preceding subsection gives
\[
\ell_q\!\left(\frac q\beta,\frac q\gamma\right)
=\ell_q\!\left(\frac q\gamma,\frac q\beta\right).
\]

\subsection{The case \texorpdfstring{$\beta=q$}{beta=q}}

First assume that $\alpha\neq q$.
Substituting $a=\alpha Q$ and $z=\gamma^{-1}$ into the $q$-binomial theorem
\[
\sum_{k=0}^{\infty}
\frac{(a;Q)_k}{(Q;Q)_k}z^k
=\frac{(az;Q)_\infty}{(z;Q)_\infty}
\qquad(|z|<1)
\]
gives
\[
\mathcal S_{\alpha,q,\gamma}(q)
=\frac{P_{\alpha,\gamma}(q)-1}{1-\alpha/q},
\qquad
P_{\alpha,\gamma}(q)
:=\frac{(\alpha\gamma^{-1}Q;Q)_\infty}
{(\gamma^{-1};Q)_\infty}.
\]
In particular, if $q,\alpha,\gamma\in\mathbb Q$ and $\alpha\neq q$, this linear relation shows that one of $\mathcal S_{\alpha,q,\gamma}(q)$ and $P_{\alpha,\gamma}(q)$ is irrational if and only if the other is, and that their irrationality measures then coincide.

\subsection{A parameter transformation in the case \texorpdfstring{$\beta=q$}{beta=q}}\label{sec:reciprocal-product-duality}

Define
\[
(\alpha,\gamma)
\longmapsto
(\alpha^*,\gamma^*)
:=\left(\frac{q^2}{\alpha},\frac{\gamma q}{\alpha}\right).
\]
Applying this transformation twice returns $(\alpha,\gamma)$.
Assume that both the original and starred parameters satisfy the conditions of this section, and that $|\gamma|>1$ and $|\gamma q/\alpha|>1$.
If $\alpha\neq q$, the definition of the infinite product gives
\[
P_{\alpha^*,\gamma^*}(q)
=P_{\alpha,\gamma}(q)^{-1},
\]
and hence
\[
\mathcal S_{\alpha^*,q,\gamma^*}(q)
=\frac{(\alpha/q)\,\mathcal S_{\alpha,q,\gamma}(q)}
{1+(1-\alpha/q)\mathcal S_{\alpha,q,\gamma}(q)}.
\]
When $\alpha=q$, this formula is the identity relating the same parameters and series.

In fact, this relation holds at the level of finite approximations.
Write $a_n,b_n$ for the sequences obtained from the original parameters and $a_n^*,b_n^*$ for those obtained from the starred parameters, and set
\[
R_n:=\frac{(\gamma q/\alpha;q)_n}{(\gamma;q)_n}.
\]

\begin{lemma}
Let $\beta=q$.
Then, for every $n\geq0$,
\[
\frac\alpha q a_n=R_na_n^*,
\qquad
b_n+\left(1-\frac\alpha q\right)a_n=R_nb_n^*.
\tag{2.1}
\]
\end{lemma}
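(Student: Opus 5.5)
One route is a term-by-term comparison of the defining sums; the other is the recurrence together with initial values. I would use the explicit sums first.

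\emph{Weights.} With $\beta=q$ the weight reads
\[
b(n,k)=q^{k(k+1)/2}\Bigl(-\frac\gamma\alpha\Bigr)^k\binom nk_q\frac{(q;q)_{n+k}}{(q^2/\alpha;q)_k(\alpha;q)_n}.
\]
Under the starred parameters we have $\gamma^*/\alpha^*=\gamma/q$ and $q^2/\alpha^*=\alpha$. Hence
\[
b^*(n,k)=q^{k(k+1)/2}\Bigl(-\frac\gamma q\Bigr)^k\binom nk_q\frac{(q;q)_{n+k}}{(\alpha;q)_k(q^2/\alpha;q)_n}.
\]
So the starred weight is the original one with the roles of the $k$-indexed and $n$-indexed Pochhammer symbols $(\alpha;q)$ and $(q^2/\alpha;q)$ interchanged, together with the factor $(\alpha/q)^k$. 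Dividing, $b^*(n,k)/b(n,k)$ is the explicit factor
\[
\Bigl(\frac\alpha q\Bigr)^k\frac{(q^2/\alpha;q)_k(\alpha;q)_n}{(\alpha;q)_k(q^2/\alpha;q)_n}.
\]

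\emph{Potentials.} I would not look for a termwise identity between $b_n$ and $b_n^*$. Instead I would use the representation $c(n,k)=\sum_{j<k}F(0,j)+\sum_{m<n}G(m,k)$, and note that $F(0,j)$ at $\beta=q$ is a $q$-binomial-type term. The relation between the two potentials comes from $P_{\alpha^*,\gamma^*}=P_{\alpha,\gamma}^{-1}$. At finite level this suggests the \emph{Ansatz} that the quantity $1+(1-\alpha/q)c(n,k)$ is essentially a ratio of finite products, namely the truncation of $P_{\alpha,\gamma}$, corrected by a term in $k$.

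To find the identity, I would compute $1+(1-\alpha/q)\sum_{m<n}G(m,0)$ in closed form as a telescoping sum. With $\beta=q$,
\[
G(m,0)=\frac{q^{m+1}(\alpha;q)_m}{(q;q)_{m+1}(\gamma;q)_{m+1}},
\]
and $(1-\alpha/q)$ times this should telescope against a partial $q$-binomial sum. This may not close as a simple product, and that is exactly why the full sums over $k$ are needed.

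\emph{Main route: recurrence and initial values.} Because a termwise identity may fail, I expect the cleanest route to be the recurrence. By \cref{prop:common-recurrence}, $a_n$ and $b_n$ satisfy $L u=0$, and $a_n^*,b_n^*$ satisfy $L^*u=0$. If $u_n$ solves $L^*$, then $R_nu_n$ solves the conjugated operator
\[
R_{n+2}^{-1}\,L^*\!\circ R_n^{-1}\text{-gauge},
\]
that is, the operator whose coefficients are $y_1^*(n)R_{n+2}/R_{n+1}$ and $y_0^*(n)R_{n+2}/R_n$.

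The plan is then:
\begin{enumerate}
\item Verify, using the explicit $y_0$ with its $\mathcal K_n$ and the Maple expression for $y_1$, that this gauge-transformed $L^*$ equals $L$. As a consistency check on $y_0$: at $\beta=q$, $\mathcal K_n=\gamma q^{3n+3}-(\alpha+q+\gamma)q^{n+1}+\alpha+q$, and swapping $(\alpha,\gamma)\mapsto(q^2/\alpha,\gamma q/\alpha)$ rescales $\mathcal K_n$ by $q/\alpha$ up to the appropriate factor. The ratio $y_0^*/y_0$ should match
\[
\frac{R_{n+2}}{R_n}=\frac{(1-\gamma q^{n+1}/\alpha)(1-\gamma q^{n+2}/\alpha)}{(1-\gamma q^n)(1-\gamma q^{n+1})}.
\]
\item Given step 1, both sides of each identity in (2.1) solve the same second-order recurrence with nonvanishing leading coefficient, so it suffices to check $n=0,1$.
\item At $n=0$: $a_0=0=a_0^*$ and $b_0=1=b_0^*$, with $R_0=1$.
\item At $n=1$: a direct finite computation from the two-term sums, using $c(1,0)=G(0,0)$ and $c(1,1)=G(0,0)+F(1,0)$.
\end{enumerate}
The linear combination $b_n+(1-\alpha/q)a_n$ is again a solution of $L$, so the second identity follows the same way as the first.

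The main obstacle is step 1, identifying $y_1^*$ with the conjugate of $y_1$. Since $y_1$ is only available as Maple output, I would verify that rational-function identity symbolically, as was done in \cref{prop:common-recurrence}. If a human-readable argument is wanted, I would derive it from the constancy of the Casoratian ratio.
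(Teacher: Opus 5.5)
Your main route is exactly the paper's proof. You check that $y_1^*(n)=y_1(n)R_{n+1}/R_{n+2}$ and $y_0^*(n)=y_0(n)R_n/R_{n+2}$, so that $R_na_n^*$ and $R_nb_n^*$ (and hence both sides of (2.1)) satisfy the original recurrence, and then you verify $n=0,1$; the paper likewise relies on a symbolic check of the $y_1$ relation and of the $n=1$ values.

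One slip to fix: by your own gauge formula the consistency check must read $y_0^*/y_0=R_n/R_{n+2}$, not $R_{n+2}/R_n$. Indeed $\mathcal K_n^*=(q/\alpha)\mathcal K_n$, and the remaining factors of $y_0^*/y_0$ reduce exactly to $\alpha^2(\gamma q^n-1)(\gamma q^{n+1}-1)/\bigl((\gamma q^{n+1}-\alpha)(\gamma q^{n+2}-\alpha)\bigr)=R_n/R_{n+2}$.
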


\begin{proof}
Let $y_i(n)$ and $y_i^*(n)$ denote the recurrence coefficients for the two sets of parameters.
Then
\[
y_1^*(n)=y_1(n)\frac{R_{n+1}}{R_{n+2}},
\qquad
y_0^*(n)=y_0(n)\frac{R_n}{R_{n+2}}.
\]
Consequently, $R_na_n^*$ and $R_nb_n^*$ satisfy the same recurrence as $a_n,b_n$.
The initial values are
\[
a_0=0,
\qquad
b_0=1,
\]
\[
\begin{aligned}
a_1
&=-\frac{q(q-1)(-\alpha+\gamma q^2+\gamma q-q)}
{(q^2-\alpha)(\alpha-1)(\gamma-1)},\\
b_1
&=-\frac{(q-1)(\alpha+\gamma q^3-\gamma q-q^2)}
{(q^2-\alpha)(\alpha-1)}.
\end{aligned}
\]
The starred initial values are obtained by substituting $(\alpha^*,\gamma^*)$ into these two formulas, and both identities in (2.1) hold for $n=0,1$.
The second-order recurrence therefore proves (2.1).
\end{proof}

The transformation of the coefficients and the four initial values can be verified in Maple.
More precisely, the difference between the two sides of each identity simplifies to zero as a rational function.
Archival details for the complete input and saved output are given in
\cref{app:maple-duality}.

In particular, whenever the denominator is nonzero,
\[
\frac{a_n^*}{b_n^*}
=\frac{(\alpha/q)a_n}
{b_n+(1-\alpha/q)a_n},
\]
so the fractional linear transformation of the series already holds at every finite stage.

\section{Tools for irrationality proofs}\label{sec:tools}

In this section, we develop the common tools that will be used in the irrationality proofs in the subsequent sections.

\subsection{A recurrence for the Casoratian}

\begin{lemma}\label{lem:casoratian}
Suppose that the coefficients $y_0(n),y_1(n)$ are defined for $n\geq0$ and that the complex sequences $(a_n)$ and $(b_n)$ satisfy the same recurrence
\[
u_{n+2}+y_1(n)u_{n+1}+y_0(n)u_n=0.
\]
If $W_n:=a_nb_{n+1}-a_{n+1}b_n$, then
\[
W_{n+1}=y_0(n)W_n
\]
for every $n\geq0$, and
\[
W_n=W_0\prod_{j=0}^{n-1}y_0(j).
\]
In particular, if $W_0\neq0$ and $y_0(j)\neq0$ for every $j\geq0$, then $W_n\neq0$ for every $n\geq0$.
\end{lemma}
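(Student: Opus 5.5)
The plan is to compute $W_{n+1}$ directly from its definition and eliminate the index $n+2$ using the recurrence. By definition,
\[
W_{n+1}=a_{n+1}b_{n+2}-a_{n+2}b_{n+1}.
\]
The recurrence gives $a_{n+2}=-y_1(n)a_{n+1}-y_0(n)a_n$ and $b_{n+2}=-y_1(n)b_{n+1}-y_0(n)b_n$. Substituting these two expressions yields
\[
W_{n+1}=-y_1(n)a_{n+1}b_{n+1}-y_0(n)a_{n+1}b_n+y_1(n)a_{n+1}b_{n+1}+y_0(n)a_nb_{n+1}.
\]
The two terms containing $y_1(n)$ cancel. The remaining terms equal $y_0(n)\bigl(a_nb_{n+1}-a_{n+1}b_n\bigr)=y_0(n)W_n$. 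This is an identity valid for each $n\geq0$ at which the coefficients are defined, so no hypothesis beyond the recurrence is needed.

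The product formula then follows by induction on $n$. The case $n=0$ is the empty product, $W_0=W_0$. For the inductive step, we apply $W_{n+1}=y_0(n)W_n$ to the formula for $W_n$, which gives
\[
W_{n+1}=W_0\prod_{j=0}^{n}y_0(j).
\]

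The final assertion is immediate. A finite product of nonzero complex numbers is nonzero, so if $W_0\neq0$ and $y_0(j)\neq0$ for all $j$, then $W_n\neq0$ for every $n\geq0$.

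There is no real obstacle here. The only care needed is the sign convention in $W_n=a_nb_{n+1}-a_{n+1}b_n$, so that the factor comes out as $+y_0(n)$ rather than $-y_0(n)$. This is consistent with the standard second-order Casoratian computation and matches the explicit $y_0(n)$ recorded in \cref{prop:common-recurrence}, which will later be used in the product.
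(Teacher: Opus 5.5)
Your proof is correct and is essentially the paper's own argument: substitute the recurrence for $a_{n+2}$ and $b_{n+2}$ into $W_{n+1}$, note that the $y_1(n)$ terms cancel to leave $y_0(n)W_n$, and iterate. The product formula and the nonvanishing conclusion then follow exactly as you describe.
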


\begin{proof}
Substitution of the recurrence gives
\[
\begin{aligned}
W_{n+1}
&=a_{n+1}b_{n+2}-a_{n+2}b_{n+1}\\
&=a_{n+1}\bigl(-y_1(n)b_{n+1}-y_0(n)b_n\bigr)\\
&\quad-\bigl(-y_1(n)a_{n+1}-y_0(n)a_n\bigr)b_{n+1}\\
&=y_0(n)W_n.
\end{aligned}
\]
The product formula follows by iterating this identity.
\end{proof}

If rational sequences $a_n,b_n$ are multiplied by a nonzero common denominator $M_n$ so that
\[
A_n:=M_na_n\in\mathbb Z,
\qquad
B_n:=M_nb_n\in\mathbb Z,
\]
then
\[
A_nB_{n+1}-A_{n+1}B_n
=M_nM_{n+1}W_n.
\]
Thus, if $W_n\neq0$, then the corresponding determinant obtained after multiplying $a_n,b_n$ by a common denominator is also nonzero.

\subsection{Estimates for finite products of polynomials}

\begin{lemma}\label{lem:finite-products}
Fix an integer $x$ and put $\rho:=|x|\geq2$.
There exist constants $c_\rho,C_\rho>0$, depending only on $\rho$, such that
\[
c_\rho
\leq\prod_{e\in E}|1-x^{-e}|
\leq C_\rho
\]
for every finite set $E$ of positive integers.
\end{lemma}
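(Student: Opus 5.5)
The plan is to take logarithms and compare each factor with the corresponding term of a convergent series. For every positive integer $e$ we have $|x^{-e}|=\rho^{-e}\leq 1/2$. Hence each factor satisfies
\[
0<1-\rho^{-e}\leq |1-x^{-e}|\leq 1+\rho^{-e},
\]
so no factor vanishes. Because $E$ is a set of distinct positive integers, the product over $E$ is bounded above by the product over all $e\geq1$ when we use the upper bounds, and bounded below by the product over all $e\geq1$ when we use the lower bounds. This works because every upper-bound factor is at least $1$ and every lower-bound factor is at most $1$.

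This suggests the choices
\[
C_\rho:=\prod_{e=1}^{\infty}(1+\rho^{-e}),
\qquad
c_\rho:=\prod_{e=1}^{\infty}(1-\rho^{-e}).
\]
Both infinite products converge, since $\sum_e\rho^{-e}=1/(\rho-1)<\infty$. For the upper bound, $\log(1+t)\leq t$ gives $C_\rho\leq\exp\bigl(1/(\rho-1)\bigr)$. For the lower bound, $c_\rho>0$ because for $0\leq t\leq1/2$ we have $\log(1-t)\geq -2t\log 2$, by concavity on $[0,1/2]$. This yields $c_\rho\geq \exp\bigl(-2\log2/(\rho-1)\bigr)\geq 1/4$.

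Both constants depend only on $\rho$, and the sign of $x$ plays no role. If one wants bounds uniform in $\rho\geq2$, one can take $c=1/4$ and $C=e$. The empty set gives product $1$, which lies between these bounds.

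The only step needing care is the positivity of the lower bound. This is precisely where the hypothesis $\rho\geq2$ is used, and with it the elementary logarithm inequality above settles it. No other obstacle is expected.
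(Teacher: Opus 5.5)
Your proof is correct and follows the paper's argument: bound each factor between $1-\rho^{-e}$ and $1+\rho^{-e}$, then compare with the convergent infinite products $\prod_{e\geq1}(1\pm\rho^{-e})$. Your explicit uniform constants $1/4$ and $e$, obtained via the chord bound $\log(1-t)\geq-2t\log2$ on $[0,1/2]$, are a valid refinement of the paper's bare appeal to convergence of these products.
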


\begin{proof}
For every $e\geq1$, we have
$1-\rho^{-e}\leq|1-x^{-e}|\leq1+\rho^{-e}$.
The two infinite products
\[
\prod_{e=1}^{\infty}(1-\rho^{-e}),
\qquad
\prod_{e=1}^{\infty}(1+\rho^{-e})
\]
converge to a positive number and a finite number, respectively, which proves the assertion.
\end{proof}

\begin{lemma}\label{lem:cyclotomic-growth}
Let $\Phi_d(X)$ be the $d$th cyclotomic polynomial.
Fix an integer $x$, put $\rho:=|x|\geq2$, and fix a constant $M>0$.
Suppose that the nonnegative integers $m_{d,N}$ satisfy
\[
m_{d,N}\leq\frac{MN}{d}
\qquad(1\leq d\leq N).
\]
Then
\[
\log\left|
\prod_{d=1}^{N}\Phi_d(x)^{m_{d,N}}
\right|
=\left(\sum_{d=1}^{N}m_{d,N}\varphi(d)\right)\log\rho
+O_{\rho,M}(N\log N).
\]
\end{lemma}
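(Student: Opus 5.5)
The plan is to estimate each factor $\log|\Phi_d(x)|$ separately, compare it with $\varphi(d)\log\rho$, and then sum the errors weighted by $m_{d,N}$.

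By Möbius inversion,
\[
\Phi_d(x)=\prod_{e\mid d}(x^{e}-1)^{\mu(d/e)}.
\]
We write $x^e-1=x^e(1-x^{-e})$. Since $\sum_{e\mid d}e\,\mu(d/e)=\varphi(d)$, this gives
\[
\Phi_d(x)=x^{\varphi(d)}\prod_{e\mid d}(1-x^{-e})^{\mu(d/e)}.
\]
Hence
\[
\log|\Phi_d(x)|=\varphi(d)\log\rho+\log\prod_{e\in E_d^+}|1-x^{-e}|-\log\prod_{e\in E_d^-}|1-x^{-e}|,
\]
where $E_d^{\pm}$ is the finite set of divisors $e$ of $d$ with $\mu(d/e)=\pm1$.

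By \cref{lem:finite-products}, both products lie in $[c_\rho,C_\rho]$. Therefore
\[
\bigl|\log|\Phi_d(x)|-\varphi(d)\log\rho\bigr|\le K_\rho,
\]
where $K_\rho:=\log C_\rho-\log c_\rho$ depends only on $\rho$. The bound is uniform in $d$ and also covers $d=1$. Note that $\Phi_d(x)\neq0$ because $|x|\ge2$, so the logarithms are defined.

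Multiplying by $m_{d,N}$ and summing over $d$, the total error is at most
\[
K_\rho\sum_{d=1}^N m_{d,N}\le K_\rho MN\sum_{d\le N}\frac1d=O_{\rho,M}(N\log N),
\]
which gives the stated formula.

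No step here is a serious obstacle. The only point needing care is that the error per cyclotomic factor must be bounded uniformly in $d$, rather than growing with the number of divisors of $d$. Passing through \cref{lem:finite-products} on the sets $E_d^{\pm}$ handles exactly this, since it avoids bounding each factor $|1-x^{-e}|$ individually.
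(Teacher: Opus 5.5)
Your proof is correct and follows the paper's argument: you write $\log|\Phi_d(x)|-\varphi(d)\log\rho=\sum_{e\mid d}\mu(d/e)\log|1-x^{-e}|$, bound this error uniformly in $d$, and sum against $m_{d,N}\le MN/d$ to get $O_{\rho,M}(N\log N)$. The only difference is bookkeeping: you obtain the uniform per-$d$ bound from \cref{lem:finite-products}, whereas the paper bounds each term by $O_\rho(\rho^{-e})$ and sums over pairs $eh\le N$.
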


\begin{proof}
Write $\mu$ for the M\"obius function.
Expressing the cyclotomic polynomial as a product, we obtain
\[
\log|\Phi_d(x)|-\varphi(d)\log\rho
=\sum_{e\mid d}\mu(d/e)\log|1-x^{-e}|.
\]
Since $\bigl|\log|1-x^{-e}|\bigr|=O_\rho(\rho^{-e})$, the sum of the absolute values of the errors is bounded by
\[
\sum_{d=1}^{N}m_{d,N}
\sum_{e\mid d}\bigl|\log|1-x^{-e}|\bigr|
\leq MN
\sum_{eh\leq N}
\frac{\bigl|\log|1-x^{-e}|\bigr|}{eh}
=O_{\rho,M}(N\log N).
\]
This proves the claim.
\end{proof}

\subsection{A lemma for estimating the irrationality measure}

\begin{lemma}\label{lem:irrationality-measure}
Let $\xi\in\mathbb R$, and suppose that pairs of integers $(A_n,B_n)$ satisfy
\[
\lvert A_n\rvert+\lvert B_n\rvert
\leq X^{\kappa n^2+o(n^2)},
\qquad
0<\lvert B_n\xi-A_n\rvert
\leq X^{-\lambda n^2+o(n^2)}
\]
for some $X>1$ and $\kappa,\lambda>0$.
Then $\xi$ is irrational.
Moreover, if $B_n\neq0$ and
\[
A_nB_{n+1}-A_{n+1}B_n\neq0
\]
for every sufficiently large $n$, then the irrationality measure of $\xi$ satisfies
\[
\mu(\xi)\leq1+\frac{\kappa}{\lambda}.
\]
\end{lemma}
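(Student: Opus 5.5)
Irrationality is immediate. If $\xi=P/Q$ with $Q>0$, then $QB_n\xi-QA_n=PB_n-QA_n$ is an integer. By hypothesis it is nonzero, so $1\le Q\,\lvert B_n\xi-A_n\rvert\le Q\,X^{-\lambda n^2+o(n^2)}$. The right-hand side tends to $0$, which is a contradiction.

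For the measure bound I will follow the standard argument that uses two consecutive approximations (as in the Hata / Rhin--Viola style lemmas).

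\emph{Setup.} Fix $\varepsilon>0$ and a fraction $P/Q$ with $Q$ large. Put $\delta_n:=B_n\xi-A_n$. Since $\lambda n^2$ is increasing and the $o(n^2)$ terms can be absorbed, for $n$ large we have
\[
\lvert\delta_n\rvert\le X^{-(\lambda-\varepsilon)n^2}
\quad\text{and}\quad
\lvert B_n\rvert\le X^{(\kappa+\varepsilon)n^2}.
\]
Choose $n$ minimal (and $\ge n_0$) such that
\[
X^{-(\lambda-\varepsilon)n^2}<\frac{1}{2Q}.
\]
Minimality gives $\frac{1}{2Q}\le X^{-(\lambda-\varepsilon)(n-1)^2}$. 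Because $(n-1)^2=n^2+O(n)$, this yields $\log Q\ge(\lambda-\varepsilon)n^2\log X-O(n)$, and hence $n^2\le(1+o(1))\log Q/(\lambda\log X)$.

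\emph{Nonvanishing.} The determinant condition $A_nB_{n+1}-A_{n+1}B_n\neq0$ says the vectors $(A_n,B_n)$ and $(A_{n+1},B_{n+1})$ are linearly independent. So at least one $m\in\{n,n+1\}$ has $PB_m-QA_m\neq0$; otherwise both vectors would be proportional to $(P,Q)$. For that $m$,
\[
1\le\lvert PB_m-QA_m\rvert=\bigl\lvert B_m(P-Q\xi)+Q\delta_m\bigr\rvert\le \lvert B_m\rvert\,Q\,\Bigl\lvert\xi-\frac PQ\Bigr\rvert+Q\lvert\delta_m\rvert .
\]
Since $m\ge n$, we have $Q\lvert\delta_m\rvert<\tfrac12$. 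Therefore
\[
\Bigl\lvert\xi-\frac PQ\Bigr\rvert\ge\frac{1}{2Q\lvert B_m\rvert}\ge \frac{1}{2Q}\,X^{-(\kappa+\varepsilon)(n+1)^2}.
\]

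\emph{Conversion to a power of $Q$.} Substituting $(n+1)^2=(1+o(1))n^2\le(1+o(1))\log Q/(\lambda\log X)$ gives
\[
X^{(\kappa+\varepsilon)(n+1)^2}\le Q^{(\kappa+\varepsilon)/\lambda+o(1)}.
\]
Hence $\lvert\xi-P/Q\rvert\ge Q^{-1-\kappa/\lambda-\varepsilon'}$ for all large $Q$, where $\varepsilon'\to0$ as $\varepsilon\to0$. This gives $\mu(\xi)\le1+\kappa/\lambda$. The hypothesis $B_n\neq0$ is only needed to ensure the bound on $\lvert B_m\rvert$ is used with a genuine nonzero denominator; it is not otherwise essential.

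The main point of care is the choice of $n$. The growth is quadratic in $n$, but passing from $n$ to $n+1$ changes the exponents only by $O(n)=o(n^2)$, and that loss is absorbed into the $o(1)$.
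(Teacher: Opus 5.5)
Your proof is correct and follows essentially the same route as the paper's: choose the minimal $n$ for which the error bound falls below $1/(2Q)$, use the nonvanishing determinant to find $m\in\{n,n+1\}$ with $PB_m\neq QA_m$, and convert $|B_m|\le X^{(\kappa+\varepsilon)(n+1)^2}$ into a power of $Q$. Working with the integer $PB_m-QA_m$ instead of comparing $A_m/B_m$ with $P/Q$ is only cosmetic, though it does make the hypothesis $B_n\neq0$ automatic, as you note. The one slip is that minimality gives $n^2\le(1+o(1))\log Q/((\lambda-\varepsilon)\log X)$, with $\lambda-\varepsilon$ rather than $\lambda$ in the denominator; this is harmless because $(\kappa+\varepsilon)/(\lambda-\varepsilon)\to\kappa/\lambda$ as $\varepsilon\to0$.
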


\begin{proof}
Suppose that $\xi=p/q\in\mathbb Q$, where $p\in\mathbb Z$ and $q\in\mathbb Z_{>0}$.
By assumption, the error tends to zero while remaining nonzero.
On the other hand,
\[
q\lvert B_n\xi-A_n\rvert
=\lvert pB_n-qA_n\rvert
\]
is a positive integer and hence is at least $1$, a contradiction.
Therefore $\xi$ is irrational.

Fix $\eta>0$.
Choose $\delta>0$ sufficiently small that
\[
\frac{\kappa+\delta}{\lambda-\delta}
<\frac{\kappa}{\lambda}+\frac\eta2.
\]
For every sufficiently large $n$,
\[
\lvert A_n\rvert+\lvert B_n\rvert
\leq X^{(\kappa+\delta)n^2},
\qquad
\lvert B_n\xi-A_n\rvert
\leq X^{-(\lambda-\delta)n^2}.
\]
Given $a\in\mathbb Z$ and a sufficiently large $b\in\mathbb Z_{>0}$, choose the smallest nonnegative integer $n$ such that
\[
X^{(\lambda-\delta)n^2}\geq2b.
\]
For sufficiently large $b$, both $n$ and $n+1$ lie in the range where the above estimates hold and both linear forms are nonzero.
For each $j\in\{n,n+1\}$,
\[
\lvert B_j\xi-A_j\rvert
\leq X^{-(\lambda-\delta)j^2}
\leq X^{-(\lambda-\delta)n^2}
\leq\frac1{2b}.
\]
Since the determinant is nonzero, $A_n/B_n$ and $A_{n+1}/B_{n+1}$ are distinct.
Thus $a/b$ differs from at least one of them.
Denote its index by $j$.
By integrality,
\[
\left\lvert\xi-\frac ab\right\rvert
\geq
\left\lvert\frac{A_j}{B_j}-\frac ab\right\rvert
-\left\lvert\xi-\frac{A_j}{B_j}\right\rvert
\geq\frac1{2b\lvert B_j\rvert}.
\]
By the minimality of $n$, we have
$X^{(\lambda-\delta)(n-1)^2}<2b$
and hence
\[
n^2
\leq
\frac{\log_X(2b)}{\lambda-\delta}
+O(\sqrt{\log b}).
\]
Since $j\leq n+1$, we also have
$\lvert B_j\rvert\leq X^{(\kappa+\delta)(n+1)^2}$.
These two estimates give
\[
\frac{\log(2b\lvert B_j\rvert)}{\log b}
\leq
1+\frac{\kappa+\delta}{\lambda-\delta}+o(1).
\]
By our choice of $\delta$, for every sufficiently large $b$ and every $a\in\mathbb Z$ we obtain
\[
\left\lvert\xi-\frac ab\right\rvert
>b^{-1-\kappa/\lambda-\eta}.
\]
Since $\eta>0$ is arbitrary, the assertion follows.
\end{proof}

\section{Ramanujan's theta function}\label{sec:psi}

Define Ramanujan's theta function by
\[
\psi(r)
:=\sum_{m=0}^{\infty}r^{m(m+1)/2}
\qquad (|r|<1).
\]
For an integer $x$ with $|x|\geq2$, specialize the three-parameter $q$-WZ
pair of this paper by setting
$(q,\alpha,\beta,\gamma)=(x^2,x,x^2,x)$.
The target series is
\[
S_x:=\mathcal{S}_{x,x^2,x}(x^2)=\frac{P_{x,x}(x^2)-1}{1-x^{-1}}.
\]
By Gauss's identity
\[
\psi(r)=\frac{(r^2;r^2)_\infty}{(r;r^2)_\infty},
\]
the infinite product on the right-hand side is
\[
P_{x,x}(x^2)
=\frac{(x^{-2};x^{-2})_\infty}{(x^{-1};x^{-2})_\infty}
=\psi(x^{-1}).
\]
Since $1-x^{-1}$ is a nonzero rational number, $S_x$ and $\psi(x^{-1})$ are related by an affine transformation with nonzero rational coefficients.
Thus, one is irrational if and only if the other is, and, when they are irrational, they have the same irrationality measure.

\begin{theorem}\label{thm:psi}
For every integer $x$ with $|x|\geq2$, the number $\psi(x^{-1})$ is irrational, and
\[
\mu\bigl(\psi(x^{-1})\bigr)\leq\frac{18}{7}=2.571428\dots.
\]
\end{theorem}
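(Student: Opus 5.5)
We specialize to $(q,\alpha,\beta,\gamma)=(x^2,x,x^2,x)$ and apply \cref{lem:irrationality-measure} to $\xi=S_x$, with $A_n=M_na_n$ and $B_n=M_nb_n$ for a suitable common denominator $M_n$. The proof has four steps: an explicit form for $a_n,b_n$, a bound on the remainder, a denominator bound sharpened by the symmetry, and nonvanishing.

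\emph{Step 1: explicit sums.} Writing $q=x^2$, we have
\[
b(n,k)=(-1)^kq^{k(k+1)/2}\binom nk_q\frac{(q;q)_{n+k}}{(xq;q)_k(x;q)_n}.
\]
All factors are products of $x^{j}-1$, that is, products of cyclotomic values $\Phi_d(x)$. The potential $c(n,k)$ has the explicit form from \cref{sec:construction}. Its terms have denominators $(x^2;x^2)_m(x;x^2)_m$ and $(x^2;x^2)_{n+m}(x;x^2)_n$, up to the $q$-powers coming from $(\alpha/\gamma)^k=1$.

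\emph{Step 2: the remainder.} We use
\[
S_xb_n-a_n=\sum_{k}b(n,k)\sum_{m\ge n}G(m,k).
\]
The estimate $|G(m,k)|\le C|\gamma|^{-k}A^m|q|^{-m(m+1)/2-mk}$ then gives a bound of the form $\rho^{-\lambda n^2+o(n^2)}$. To compute it, we maximize over $k=tn$ the exponent
\[
\log_\rho|b(n,k)|\approx n^2\bigl(2t(1-t)+(1+t)^2-t^2-1\bigr)
\]
against the decay $-n^2(1+2t)-n^2$ of the tail. Here $b(n,k)$ has size about $\rho^{2[k^2/2+k(n-k)]+(n+k)^2-k^2-n^2}$, and $G(n,k)$ has size about $\rho^{-2\cdot n^2/2-2nk-n^2-\dots}$, so the dominant term is at $k$ near $0$ or $n$. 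The result is an explicit $\lambda$. To show the linear form is nonzero, I would write the remainder as a convergent sum with a dominant single term, or alternatively deduce it from the Casoratian: $W_n\ne0$ by \cref{lem:casoratian}, since $y_0(j)\ne0$ (check $\mathcal K_n\ne0$ for integer $x$) and $W_0=-a_1\ne0$. Two consecutive linear forms cannot both vanish, which is enough for \cref{lem:irrationality-measure} after passing to a subsequence. Alternatively, we can note that the recurrence is second order and $b_n\ne0$ for large $n$.

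\emph{Step 3: the denominator, the main obstacle.} The naive common denominator is a product of $\Phi_d(x)^{m_{d,n}}$ with $m_{d,n}=O(n/d)$, and \cref{lem:cyclotomic-growth} converts the cyclotomic exponent count $\sum m_{d,n}\varphi(d)$ into $\kappa n^2$ through the density $\sum\varphi(d)/d^2$-type asymptotics. The naive count alone presumably does not reach $18/7$. Here we use (2.1): with $\beta=q$, the starred parameters are $(\alpha^*,\gamma^*)=(q^2/\alpha,\gamma q/\alpha)=(x^3,x^2)$. Then
\[
\frac{x}{q}a_n=R_na_n^*,\qquad b_n+(1-x^{-1})a_n=R_nb_n^*,\qquad R_n=\frac{(x^3;x^2)_n}{(x;x^2)_n}.
\]
So $a_n$ and $b_n$ also admit the starred representation, whose denominators involve different Pochhammer factors. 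For each $d$, the true $\Phi_d(x)$-adic valuation of the denominator is bounded by the minimum of the two counts, corrected by the $\Phi_d$-content of $R_n$. I would compute, for each residue of $n/d$, the exponent counts as piecewise-linear functions of $\{n/d\}$, in the manner of Rhin--Viola and Zudilin, taking the minimum over the two representations and also accounting for the $\Phi_d$ factors cancelled by $q$-binomial numerators $\binom nk_q$ (which contribute $\lfloor n/d\rfloor-\lfloor k/d\rfloor-\lfloor (n-k)/d\rfloor$). Summing $\varphi(d)$ times these counts, with the even and odd $d$ behaving differently because $q=x^2$, gives $\kappa$ as a rational multiple of $n^2$. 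The $\pi^2$-free answer for $\psi$ suggests the periodic averages cancel. We expect $\kappa/\lambda=11/7$.

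\emph{Step 4: conclusion.} Finally, we verify $B_n\ne0$ for large $n$ (from the asymptotic dominance of one term of $b_n$, or via the Casoratian) and apply \cref{lem:irrationality-measure} with $X=\rho$. This yields $\mu(S_x)\le 18/7$, and the affine relation between $S_x$ and $\psi(x^{-1})$ transfers the bound to $\psi(x^{-1})$.
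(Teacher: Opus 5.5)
Your outline has the right architecture: specialization, linear form, a denominator sharpened by (2.1), then \cref{lem:irrationality-measure}. But both quantitative inputs are missing, and they are exactly what produces $18/7$.

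\textbf{Step 2 does not give any $\lambda>0$.} Use the correct sizes $|b(n,k)|=\rho^{4nk-k^2+n+O(1)}$ and $|G(m,k)|=\rho^{-m^2-2mk-3m-k-1+O(1)}$. Then for $k=n-j$,
\[
|b(n,k)|\,|S_x-c(n,k)|\le\rho^{-3n-j^2+j+O(1)}.
\]
The termwise bound is therefore maximal at $k=n$ and gives only $|S_xb_n-a_n|\le\rho^{-3n+O(1)}$, with no decay at scale $n^2$. Two slips make a positive $\lambda$ appear in your computation: your tail exponent $-n^2(1+2t)-n^2$ has a spurious extra $-n^2$, and your displayed exponent $n^2(4t-2t^2)$ for $b$ contradicts your own $4nk-k^2$.

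The true size of the linear form comes from a massive cancellation that only the recurrence detects. The Casoratian is the tool for \emph{measuring} this, not just for nonvanishing. From the explicit $y_0(n)$ one gets $|y_0(n)|=\rho^3(1+O(\rho^{-2n}))$. With $W_0=-a_1\neq0$ this gives $|W_n|=\rho^{3n+O(1)}$, hence
\[
\left|\frac{a_n}{b_n}-\frac{a_{n+1}}{b_{n+1}}\right|=\frac{|W_n|}{|b_nb_{n+1}|}=\rho^{-6n^2-5n+O(1)}.
\]
The crude bound already gives $a_n/b_n\to S_x$. So the telescoping series $a_n/b_n-S_x=\sum_{m\ge n}(a_m/b_m-a_{m+1}/b_{m+1})$ is dominated by its first term, and $0<|b_nS_x-a_n|=\rho^{-3n^2-4n+O(1)}$ for every large $n$. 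This supplies $\lambda$ and the nonvanishing at once. No subsequence argument is needed, and one would not fit \cref{lem:irrationality-measure} as stated, since it needs nonvanishing for every large $n$ and uses consecutive indices.

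\textbf{Step 3 misses the mechanism.} The point of (2.1) is not a minimum of valuation counts over two potential-based representations of $a_n$. The second identity eliminates the potential altogether:
\[
(1-X^{-1})\mathsf a_n=R_n\mathsf b_n^*-\mathsf b_n,\qquad R_n=\frac{(X^2;X^2)_n}{(X;X^2)_n}.
\]
Here $\gamma q/\alpha=x^2$, so your $R_n=(x^3;x^2)_n/(x;x^2)_n$ is incorrect. Thus $\mathsf a_n$ is a difference of two single terminating sums, with no denominators coming from $c(n,k)$.

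Multiplying by $\mathsf I_n=(X;X^2)_n(X;X^2)_{n+1}/(X^2;X^2)_n$ turns $\mathsf b(n,k)$ into $(1-X)(-1)^kX^{k(k+1)}\mathsf U_{n,k}$ and $R_n\mathsf b^*(n,k)$ into $(1-X)(-1)^kX^{k^2}\mathsf V_{n,k}$. A residue count modulo odd $d$ (\cref{lem:psi-cyclotomic}) shows $\mathsf U_{n,k},\mathsf V_{n,k}\in\mathsf C_n\mathbb Z[X]$, where $\mathsf C_n=\prod_{d\le n,\ d\ \mathrm{odd}}\Phi_d(X)^{\lfloor n/d\rfloor}$. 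The factor $1-X$ then cancels against $1-X^{-1}=-(1-X)/X$. So $\mathsf J_n=\mathsf I_n/\mathsf C_n$ clears both $\mathsf a_n$ and $\mathsf b_n$ in $\mathbb Z[X]$.

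For the size, $\deg\mathsf I_n=n^2+n+1$ and $\deg\mathsf C_n=n^2/3+O(n)$ (a sum of largest odd divisors), so $|J_n|=\rho^{2n^2/3+o(n^2)}$. Together with $|b_n|=\rho^{3n^2+n+O(1)}$ this yields $\kappa=11/3$ and $\lambda=7/3$. The absence of $\pi^2$ is not an averaging cancellation: it holds because no lcm-type factor from the potential ever enters. Your Step 3 computes none of this and sets $\kappa/\lambda=11/7$ by reading it off the target, so as written it establishes no specific bound.
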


In what follows, we prove \cref{thm:psi} using the rational approximations to $S_x$, denoted by $a_n/b_n$, obtained from our $q$-WZ pair.

\subsection{Bundschuh's estimate}

Bundschuh \cite{bundschuh1974tschakaloff} considered the irrationality of the Tschakaloff series
\[
T_x(z):=\sum_{n=0}^\infty z^nx^{-n(n-1)/2}.
\]
For an integer $x$ with $|x|\geq 2$ and a nonzero rational number $z$, his Theorem 2 (Satz 2) gives
\[
\mu\bigl(T_x(z)\bigr)\leq\frac{3+\sqrt5}{2}.
\]
Taking $z=x^{-1}$, we obtain
\[
T_x(x^{-1})
=\sum_{n=0}^\infty x^{-n(n+1)/2}
=\psi(x^{-1}),
\]
and hence Bundschuh's result yields
\[
\mu\bigl(\psi(x^{-1})\bigr)
\leq\frac{3+\sqrt5}{2}
=2.618033\dots.
\]
Zudilin~\cite{zudilin2007tschakaloff} reproved the same quantitative estimate for the Tschakaloff series by means of a hypergeometric construction of rational approximations.
Moreover, a result of Nesterenko~\cite{nesterenko1996modular} states that
$T_q(q^k)$ is transcendental for algebraic $q$ with $|q|>1$ and $k\in \mathbb{Z}$;
thus, the transcendence of $\psi(x^{-1})=T_x(x^{-1})$ is already known.
The novelty of this section is to improve the upper bound for the irrationality measure to $18/7$ for every integer $x$ with $|x|\geq2$ and to obtain the approximations from the three-parameter $q$-WZ pair.

\subsection{\texorpdfstring{The specialized $q$-WZ pair}{The specialized q-WZ pair}}

Let $X$ be an indeterminate and consider the specialization $(q,\alpha,\beta,\gamma)=(X^2,X,X^2,X)$.
Under this specialization, $F$, $G$, $b$, $c$, $a_n$, and $b_n$ become
\[
\begin{aligned}
&\mathsf F(n,k)
=-X\frac{(X^3;X^2)_k}{(X^2;X^2)_{n+k+1}},\\
&\mathsf G(n,k)
=\frac{X^{2n+1}}{X^{2n+1}-1}\mathsf F(n,k),\\
&\mathsf b(n,k)
=(-1)^kX^{k(k+1)}
\binom nk_{X^2}
\frac{(X^2;X^2)_{n+k}}
{(X^3;X^2)_k(X;X^2)_n},\\
&\mathsf c(n,k)
=\sum_{m=1}^n\mathsf G(m-1,0)+\sum_{m=1}^k \mathsf F(n,m-1),\\
&\mathsf a_n
=\sum_{k=0}^n \mathsf c(n,k) \mathsf b(n,k),\qquad
\mathsf b_n
=\sum_{k=0}^n \mathsf b(n,k).
\end{aligned}
\]

In this section, we write $F(n,k)$, $G(n,k)$, $b(n,k)$, $c(n,k)$, $a_n$, and $b_n$ for the values of these expressions at $X=x$, where $x$ is an integer with $|x|\geq 2$.

\subsection{Rational approximations}

Here, we show that $a_n/b_n\to S_x$.
Put $\rho=|x|$.
By the lemma on finite products in \cref{sec:tools}, the constants implicit in $O_\rho(1)$ below are independent of $n,k,m$.

We first estimate the weight.
For $0\leq k\leq n$, the degree of $\mathsf b(n,k)$ is given by
\[
\begin{aligned}
\deg_X \mathsf b(n,k)
={}&k(k+1)+2k(n-k)+(n+k)(n+k+1)\\
&-(k^2+2k)-n^2
=4nk-k^2+n,
\end{aligned}
\]
and \cref{lem:finite-products} therefore gives
\[
|b(n,k)|
=\rho^{4nk-k^2+n+O_\rho(1)}.
\]
Writing $k=n-j$, the difference between this exponent and that of the term with $k=n$ is $-2nj-j^2$.
Consequently,
\[
\sum_{j=1}^{n}|b(n,n-j)|
=|b(n,n)|O_\rho(\rho^{-2n}),
\]
and, for all sufficiently large $n$,
\[
b_n=b(n,n)\left(1+O_\rho(\rho^{-2n})\right)\neq0,
\qquad
|b_n|=\rho^{3n^2+n+O_\rho(1)}.
\]

Similarly, by considering the degree of $\mathsf G(m,k)$ in $X$, we obtain
\[
|G(m,k)|
=\rho^{-m^2-2mk-3m-k-1+O_\rho(1)}.
\]
In \cref{sec:construction}, we observed
\[
S_x-c(n,k)=\sum_{m=n}^{\infty}G(m,k).
\]
Moreover, the difference between the exponents of $|G(m+1,k)|$ and $|G(m,k)|$ is $-2m-2k-4$.
Thus, the sum of the absolute values of the terms with $m>n$ on the right-hand side is $O_\rho(\rho^{-2n-2k})$ times the absolute value of the term with $m=n$, and we obtain the bound, uniform in $n,k$,
\[
|S_x-c(n,k)|
\leq\rho^{-n^2-2nk-3n-k-1+O_\rho(1)}.
\]
Multiplying this by the estimate for the weight and writing $k=n-j$, we find
\[
|b(n,n-j)|\,|S_x-c(n,n-j)|
\leq\rho^{-3n-j^2+j+O_\rho(1)}.
\]
Since the series \(\sum_{j\geq0}\rho^{-j^2+j}\) converges, summing the preceding estimate gives
\[
|S_xb_n-a_n|
\leq\rho^{-3n+O_\rho(1)}.
\]
Together with the above estimate for \(b_n\), this implies
\[
\lim_{n\to\infty}\frac{a_n}{b_n}=S_x.
\]

\subsection{Estimate of the approximation error}

We now use the Casoratian to obtain a precise estimate for the error of the rational approximations constructed in the preceding subsection.
Write the common recurrence as
\[
u_{n+2}+y_1(n)u_{n+1}+y_0(n)u_n=0.
\]
For the present specialization,
\[
y_0(n)
=x^2
\frac{(x^{2n+4}-1)(x^{2n+2}-1)^2}
{(x^{2n+5}-1)(x^{2n+3}-1)^2}\cdot
\frac{x^{4n+8}+x^{2n+4}-x-1}
{x^{4n+4}+x^{2n+2}-x-1}.
\]
Since $|x|\geq 2$, we have $x^j-1\neq0$, and both the numerator and the denominator of the last fraction are positive.
Thus, $y_0(n)\neq0$.
Dividing each factor by its highest power and using \cref{lem:finite-products}, we obtain
\[
|y_0(n)|
=\rho^3\left(1+O_\rho(\rho^{-2n})\right).
\]
Set the Casoratian to be $W_n=a_nb_{n+1}-a_{n+1}b_n$.
By \cref{lem:casoratian},
$W_{n+1}=y_0(n)W_n$.
The initial value is
\[
W_0=-a_1
=\frac{x^2(x+1)(x^3+x^2+2x+1)}
{(x-1)(x^2+x+1)} \neq 0.
\]
In particular, for every $n$, we have
$W_n\neq0$ and $|W_n|=\rho^{3n+O_\rho(1)}$.
Therefore, for all sufficiently large $n$,
\[
0<\left|
\frac{a_n}{b_n}-\frac{a_{n+1}}{b_{n+1}}
\right|
=\rho^{-6n^2-5n+O_\rho(1)}.
\]
Put $d_m:=a_m/b_m-a_{m+1}/b_{m+1}$.
Since we have already shown that $a_n/b_n\to S_x$,
\[
\frac{a_n}{b_n}-S_x
=\sum_{m=n}^{\infty}d_m.
\]
The preceding estimate shows that there exists a constant $C_\rho>0$ such that, for all sufficiently large $m$,
$|d_{m+1}/d_m|\leq C_\rho\rho^{-12m}$.
Hence, for all sufficiently large $n$,
\[
0<\left|
\frac{a_n}{b_n}-S_x
\right|
=\rho^{-6n^2-5n+O_\rho(1)}.
\]
Combining this with the estimate for $b_n$ yields
\[
0<|b_nS_x-a_n|
=\rho^{-3n^2-4n+O_\rho(1)}.
\]

\subsection{Parameter symmetry}

Consider the parameter transformation at $\beta=q$ mentioned in Section~2,
\[
(\alpha,\gamma)
\longmapsto
(\alpha^*,\gamma^*)
=\left(\frac{q^2}{\alpha},\frac{\gamma q}{\alpha}\right).
\]
For the present specialization, this becomes
\[
(\alpha,\gamma)=(X,X)
\longmapsto
(\alpha^*,\gamma^*)=(X^3,X^2).
\]
We denote the objects associated with the parameters $(\alpha^*,q,\gamma^*)$ by a superscript star.
For example,
\[
\mathsf b^*(n,k)
=(-1)^kX^{k^2}
\binom nk_{X^2}
\frac{(X^2;X^2)_{n+k}}
{(X;X^2)_k(X^3;X^2)_n}.
\]
By (2.1),
\[
\mathsf b_n+(1-X^{-1})\mathsf a_n
=\frac{(X^2;X^2)_n}{(X;X^2)_n}
\sum_{k=0}^{n}\mathsf b^*(n,k).
\]
\subsection{A denominator-clearing factor}

Define the following rational function:
\[
\mathsf I_n
:=\frac{(X;X^2)_n(X;X^2)_{n+1}}
{(X^2;X^2)_n}.
\]
Using the identity
$(X^3;X^2)_k=(X;X^2)_{k+1}/(1-X)$,
we obtain
\[
\mathsf I_n\mathsf b(n,k)
=(1-X)(-1)^kX^{k(k+1)}\mathsf U_{n,k},
\tag{4.1}
\]
\[
\mathsf I_n\frac{(X^2;X^2)_n}{(X;X^2)_n}\mathsf b^*(n,k)
=(1-X)(-1)^kX^{k^2}\mathsf V_{n,k},
\tag{4.2}
\]
where
\[
\mathsf U_{n,k}
:=\frac{(X;X^2)_{n+1}(X^2;X^2)_{n+k}}
{(X;X^2)_{k+1}(X^2;X^2)_k(X^2;X^2)_{n-k}},
\]
\[
\mathsf V_{n,k}
:=\frac{(X^2;X^2)_n(X^2;X^2)_{n+k}}
{(X;X^2)_k(X^2;X^2)_k(X^2;X^2)_{n-k}}.
\]
\subsection{Common cyclotomic factors}

Define the polynomial
\[
\mathsf C_n
:=\prod_{\substack{1\leq d\leq n\\d\ \mathrm{odd}}}
\Phi_d(X)^{\lfloor n/d\rfloor}.
\]
\begin{lemma}\label{lem:psi-cyclotomic}
For $0\leq k\leq n$, we have
$\mathsf U_{n,k},\mathsf V_{n,k}\in \mathsf C_n\mathbb Z[X]$.
\end{lemma}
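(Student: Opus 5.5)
The plan is to read off the exponent of each cyclotomic factor $\Phi_d(X)$ in $\mathsf U_{n,k}$ and $\mathsf V_{n,k}$, and reduce the statement to inequalities between floor functions. Up to sign, each of $\mathsf U_{n,k}$ and $\mathsf V_{n,k}$ is a quotient of products of binomials $1-X^e$. Since $1-X^e=-\prod_{d\mid e}\Phi_d(X)$, each is $\pm\prod_d\Phi_d(X)^{e_d}$ with integer exponents $e_d$. Because the $\Phi_d$ are monic with integer coefficients, it suffices to show two things: $e_d\geq0$ for all $d$, and $e_d\geq\lfloor n/d\rfloor$ for odd $d\leq n$. Then the quotient is a genuine polynomial in $\mathbb Z[X]$ divisible by $\mathsf C_n$.

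\emph{Multiplicities.} The factor $(X^2;X^2)_m=\prod_{j\le m}(1-X^{2j})$ contributes $\lfloor 2m/d\rfloor$ copies of $\Phi_d$ for even $d$ and $\lfloor m/d\rfloor$ copies for odd $d$. The factor $(X;X^2)_m=\prod_{j\le m}(1-X^{2j-1})$ contributes nothing for even $d$. For odd $d$ it contributes
\[
g_d(m):=\#\{t\ \text{odd}:\ td\le 2m-1\}=\Bigl\lfloor\frac{2m}{d}\Bigr\rfloor-\Bigl\lfloor\frac md\Bigr\rfloor
\]
copies. The second form holds because $d$ is odd, so the odd multiples of $d$ up to $2m$ are those up to $2m-1$. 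For even $d$ the exponent in both $\mathsf U_{n,k}$ and $\mathsf V_{n,k}$ is $f(n+k)-f(k)-f(n-k)$, plus $f(n)\ge0$ for $\mathsf V$, where $f(m)=\lfloor 2m/d\rfloor$. This is nonnegative by superadditivity of the floor function, so only odd $d$ needs care.

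\emph{The factor $\mathsf V_{n,k}$.} For odd $d$ the exponent is
\[
\lfloor n/d\rfloor+\bigl(\lfloor (n+k)/d\rfloor-\lfloor k/d\rfloor-\lfloor (n-k)/d\rfloor\bigr)-g_d(k).
\]
Superadditivity gives $\lfloor (n+k)/d\rfloor\ge\lfloor (n-k)/d\rfloor+\lfloor 2k/d\rfloor$. Hence the bracket is at least $\lfloor 2k/d\rfloor-\lfloor k/d\rfloor=g_d(k)$, and the exponent is at least $\lfloor n/d\rfloor$, as required.

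\emph{The factor $\mathsf U_{n,k}$.} For odd $d$ the exponent is
\[
g_d(n+1)-g_d(k+1)+\bigl(\lfloor (n+k)/d\rfloor-\lfloor k/d\rfloor-\lfloor (n-k)/d\rfloor\bigr),
\]
and I will show it is at least $\lfloor n/d\rfloor$. The same superadditivity bound gives the lower bound $g_d(n+1)-\bigl(g_d(k+1)-g_d(k)\bigr)$. Here $g_d(k+1)-g_d(k)\in\{0,1\}$, and it equals $1$ exactly when $d\mid 2k+1$. Also $g_d(n+1)=\lfloor (2n+1+d)/(2d)\rfloor\ge\lfloor n/d\rfloor$, so the generic case $d\nmid 2k+1$ is done.

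The main obstacle is the exceptional case $d\mid 2k+1$, where one unit appears to be lost. There I would not throw away the slack in superadditivity. Write $2k+1=td$ with $t$ odd, so that $k\equiv (d-1)/2\pmod d$ (with $\lfloor k/d\rfloor=(t-1)/2$). Then do a case split on the residue $r=n\bmod d$. Either $g_d(n+1)\ge\lfloor n/d\rfloor+1$, which happens when $r\ge (d-1)/2$. Or $r<(d-1)/2$, in which case $\{(n-k)/d\}$ is large. Then $\lfloor (n+k)/d\rfloor$ exceeds $\lfloor (n-k)/d\rfloor+\lfloor 2k/d\rfloor$ by one, because the fractional parts $\{(n-k)/d\}$ and $\{2k/d\}=(d-1)/d$ add to at least $1$. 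In either case the missing unit is recovered, which completes the proof.
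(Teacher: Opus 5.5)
Your proof is correct and follows essentially the same route as the paper: both reduce the lemma to counting the multiplicity of each $\Phi_d(X)$ in $\mathsf U_{n,k}$ and $\mathsf V_{n,k}$. The paper writes the odd-$d$ exponents exactly in terms of the residues $r,t$ of $n,k$ modulo $d$ with indicator functions, while you use superadditivity of the floor function, which settles $\mathsf V_{n,k}$ without cases; your exceptional case $d\mid 2k+1$ is precisely the paper's case $t=s$, and your split on $r\geq (d-1)/2$ uses the same compensating terms $\mathbf 1_{r\geq s}$ and $\mathbf 1_{r<t}$.
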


\begin{proof}
First,
\[
\frac{(X^2;X^2)_{n+k}}
{(X^2;X^2)_k(X^2;X^2)_{n-k}}
=\binom{n+k}{k}_{X^2}
\frac{(X^2;X^2)_n}{(X^2;X^2)_{n-k}}
\in\mathbb Z[X],
\]
and since \((X;X^2)_{k+1}\mid(X;X^2)_{n+1}\), we have \(\mathsf U_{n,k}\in\mathbb Z[X]\).
Moreover, the only cyclotomic polynomials that can occur in the denominator of \(\mathsf V_{n,k}\) are those of odd index arising from \((X;X^2)_k\).

Let \(\nu_d\) denote the order of \(\Phi_d(X)\) in a rational function.
For odd \(d\), counting the factors in the finite products gives
\[
\nu_d\bigl((X^2;X^2)_j\bigr)=\left\lfloor\frac jd\right\rfloor,
\qquad
\nu_d\bigl((X;X^2)_j\bigr)=\left\lfloor
\frac{j+(d-1)/2}{d}
\right\rfloor.
\]
Let \(d=2s+1\), and let \(r,t\) be the least nonnegative residues of \(n,k\), respectively, modulo \(d\).
Substituting the preceding formulas into \(\mathsf U_{n,k},\mathsf V_{n,k}\), we obtain
\[
\begin{aligned}
\nu_d(\mathsf U_{n,k})-\left\lfloor\frac nd\right\rfloor
={}&\mathbf1_{r+t\geq d}+\mathbf1_{r<t}
+\mathbf1_{r\geq s}-\mathbf1_{t\geq s},\\
\nu_d(\mathsf V_{n,k})-\left\lfloor\frac nd\right\rfloor
={}&\mathbf1_{r+t\geq d}+\mathbf1_{r<t}
-\mathbf1_{t\geq s+1}.
\end{aligned}
\]
Here, \(\mathbf1_E\) denotes the indicator of the condition \(E\).
If the negative term occurs in the first formula, then \(t\geq s\).
In this case, if \(r\geq s\), the third term cancels the negative term, while if \(r<s\), then \(r<t\) does so.
If the negative term occurs in the second formula, then \(t\geq s+1\).
In this case, unless \(r<t\), we have \(r\geq t\), and hence \(r+t\geq2s+2>d\).
Thus, the right-hand sides of both formulas are nonnegative, and we obtain $\nu_d(\mathsf U_{n,k}),\nu_d(\mathsf V_{n,k})\geq\lfloor n/d\rfloor$.
The case $d=1$ is also included in the same formulas by taking $s=r=t=0$.
Taking the product over all odd $d$ proves the asserted divisibility by $\mathsf C_n$.
\end{proof}

\subsection{Clearing denominators in the finite sums}

Summing (4.1) and (4.2) over $k$ and using $\mathsf C_n\mid \mathsf U_{n,k},\mathsf V_{n,k}$, we obtain
\[
\mathsf I_n\mathsf b_n,\ 
\mathsf I_n\bigl(\mathsf b_n+(1-X^{-1})\mathsf a_n\bigr)
\in(1-X)\mathsf C_n\mathbb Z[X].
\]
Taking the difference of these two polynomials gives
\[
(1-X^{-1})\mathsf I_n\mathsf a_n\in(1-X)\mathsf C_n\mathbb Z[X].
\]
Since $1-X^{-1}=-(1-X)/X$, dividing both sides by $1-X$ yields
$\mathsf I_n\mathsf a_n\in \mathsf C_n\mathbb Z[X]$.
Although $\mathsf J_n:=\mathsf I_n/\mathsf C_n$ is in general a rational function, the above divisibility implies
\[
\mathsf J_n\mathsf a_n\in\mathbb Z[X],
\qquad
\mathsf J_n\mathsf b_n\in\mathbb Z[X].
\]

\subsection{The size of the denominator}

We write $I_n$, $J_n$, $C_n$ for the values of $\mathsf I_n$, $\mathsf J_n$, $\mathsf C_n$ at $X=x$.
The degree of $\mathsf I_n$ in $X$ is
\[
n^2+(n+1)^2-n(n+1)
=n^2+n+1.
\]
Thus, by \cref{lem:finite-products}, for a fixed integer $x$ with $|x|\geq2$,
\[
|I_n|
=\rho^{n^2+n+1+O_\rho(1)}.
\]
On the other hand,
\[
\deg \mathsf C_n
=\sum_{\substack{d\leq n\\d\ \mathrm{odd}}}
\left\lfloor\frac nd\right\rfloor\varphi(d)
=\frac13n^2+O(n).
\]
Indeed, rearranging the first sum in divisor order shows that the contribution of each integer $m\leq n$ is the largest odd divisor of $m$.
Writing $m=2^eu$ uniquely with $u$ odd, we obtain
\[
\sum_{e\geq0}
\sum_{\substack{u\leq n/2^e\\u\ \mathrm{odd}}}u
=
\sum_{e\geq0}
\left(
\frac{n^2}{4\cdot4^e}
+O\left(\frac{n}{2^e}\right)
\right)
=\frac13n^2+O(n).
\]

Set $m_{d,n}=\lfloor n/d\rfloor$ if $d$ is odd and $m_{d,n}=0$ if it is even.
Applying \cref{lem:cyclotomic-growth} gives
$|C_n|=\rho^{n^2/3+O_\rho(n\log n)}$ and hence
\[
|J_n|
=\rho^{(2/3)n^2+o(n^2)}.
\]

\subsection{Irrationality and irrationality measure}

For an integer $x$ with $|x|\geq2$, set
\[
A_n:=J_na_n,
\qquad
B_n:=J_nb_n.
\]
By the polynomiality established above, $A_n,B_n\in\mathbb Z$.
We have $a_n/b_n\to S_x$ and
\[
|b_n|=\rho^{3n^2+n+O_\rho(1)}.
\]
Combining these facts with the estimate for $|J_n|$, we obtain
\[
|A_n|+|B_n|
=\rho^{(11/3)n^2+o(n^2)},
\]
\[
0<|B_nS_x-A_n|
=\rho^{-(7/3)n^2+o(n^2)}.
\]
Since $J_n\neq0$, $b_n\neq0$ for all sufficiently large $n$, and $W_n\neq0$, we have $B_n\neq0$ and
\[
A_nB_{n+1}-A_{n+1}B_n
=J_nJ_{n+1}W_n
\neq0
\]
for all sufficiently large $n$.
Applying \cref{lem:irrationality-measure} with base $\rho$ and
$\kappa=11/3$,
$\lambda=7/3$,
we conclude that $S_x$ is irrational and that
\[
\mu\bigl(\psi(x^{-1})\bigr)=\mu(S_x)
\leq1+\frac{11/3}{7/3}
=\frac{18}{7}.
\]
This proves \cref{thm:psi}.

\section{The generating function of the divisor function restricted to odd integers}\label{sec:delta}

For a positive integer $n$, let $d(n)$ denote the number of its positive divisors, and write the generating function of the divisor function restricted to odd integers as
\[
\Delta(r)=\sum_{m=0}^\infty d(2m+1)r^m=\sum_{k=0}^\infty\frac{r^k}{1-r^{2k+1}},
\]
where $|r|<1$.
Indeed, expanding the right-hand side as an absolutely convergent double series gives
\[
\sum_{k=0}^\infty\frac{r^k}{1-r^{2k+1}}
=\sum_{j,k\geq0}r^{k+(2k+1)j}.
\]
The equality $m=k+(2k+1)j$ is equivalent to
\[
2m+1=(2j+1)(2k+1),
\]
so the coefficient of $r^m$ equals the number of positive divisors of $2m+1$.
For an integer $x$ with $|x|\geq2$, specialize the three-parameter $q$-WZ
pair of this paper by setting
$(q,\alpha,\beta,\gamma)=(x^2,x^2,x,x)$.
The target series is
\[
S_x:=\mathcal{S}_{x^2,x,x}(x^2)
=\ell_{x^2}(x,x)
=\sum_{k=0}^\infty \frac{x^{k+1}}{x^{2k+1}-1}
=\Delta(x^{-1}).
\]

\begin{theorem}\label{thm:delta}
For every integer $x$ with $|x|\geq2$, the number $\Delta(x^{-1})$ is irrational, and
\[
\mu\bigl(\Delta(x^{-1})\bigr)\leq
\frac{18\pi^2}{7\pi^2-24}
=3.9402038235\dots.
\]
\end{theorem}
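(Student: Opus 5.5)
We follow the template of \cref{sec:psi}. We specialize to $(q,\alpha,\beta,\gamma)=(X^2,X^2,X,X)$ with $X$ an indeterminate, compute $X$-degrees, and evaluate at $X=x$ using \cref{lem:finite-products}. Then we feed the resulting sizes into \cref{lem:irrationality-measure} with base $\rho=|x|$. The target is $\kappa/\lambda=(11\pi^2+24)/(7\pi^2-24)$, since $1+\kappa/\lambda$ must equal $18\pi^2/(7\pi^2-24)$. This suggests a normalization
\[
\kappa=\tfrac{11}{3}+\tfrac{8}{\pi^2},\qquad \lambda=\tfrac73-\tfrac8{\pi^2}.
\]
Compared with \cref{thm:psi}, this is a denominator that is larger by $\rho^{(8/\pi^2)n^2}$. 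The quantity $8n^2/\pi^2$ is exactly the asymptotic degree of $\prod_{d\leq 2n,\ d\text{ odd}}\Phi_d(X)$, because $\sum_{d\leq N,\,d\text{ odd}}\varphi(d)\sim 2N^2/\pi^2$. So the plan is to find a denominator of the form ``$\psi$-type factor times an lcm of $X^{2m+1}-1$, $m<n$''.

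\textbf{Analytic part.} First compute $\deg_X\mathsf b(n,k)$. Locate the dominant $k$; I expect $k=n$ as before, giving $|b_n|=\rho^{3n^2+O(n)}$ after checking that the exponent drops quadratically away from the maximum. From $\deg\mathsf G(m,k)$ and the identity
\[
\mathcal S-c(n,k)=\sum_{m\geq n}G(m,k)
\]
from \cref{sec:construction}, obtain $|S_xb_n-a_n|\leq\rho^{-cn+O(1)}$, hence $a_n/b_n\to S_x$. Next write down $y_0(n)$ from the formula for $\mathcal K_n$ in \cref{prop:common-recurrence} and check that $y_0(n)\neq0$ for $|x|\geq2$. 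I expect $|y_0(n)|=\rho^{3}(1+O(\rho^{-n}))$; the precise constant must be computed. Check also that $W_0=-a_1\neq0$. Then \cref{lem:casoratian} together with the telescoping argument $a_n/b_n-S_x=\sum_{m\geq n}d_m$ gives the sharp two-sided estimate
\[
0<|b_nS_x-a_n|=\rho^{-3n^2+O(n)}.
\]

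\textbf{Arithmetic part.} Here $b(n,k)$ has denominator $(X;X^2)_k(X^2;X^2)_n$, since $\beta q/\alpha=X$ and $\alpha=X^2$. The potential $c(n,k)$ contains $\sum_{m\leq n}X^{2m}(X^2;X^2)_{m-1}/\bigl((X;X^2)_m^2\bigr)$ together with the $F$-sum, whose denominators are $(X;X^2)_{n+m}$-type. I would take
\[
\mathsf I_n=(X;X^2)_{n}(X;X^2)_{n+1}/(X^2;X^2)_n\cdot(\text{correction})
\]
so that $\mathsf I_n\mathsf b(n,k)$ is a polynomial that is divisible by a large cyclotomic product $\mathsf C_n$. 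This is proved as in \cref{lem:psi-cyclotomic} by computing $\nu_d$ for odd $d$ via residues mod $d$. For $a_n$, there is no $\beta=q$ duality to use directly (here $\alpha=q$ instead), so I would clear the denominator of $c(n,k)$ by the factor
\[
\mathsf L_n:=\prod_{d\leq 2n+1,\ d\text{ odd}}\Phi_d(X),
\]
the lcm of $X^{2m+1}-1$. I would then show, term by term, that $\mathsf L_n\mathsf I_n\mathsf c(n,k)\mathsf b(n,k)/\mathsf C_n\in\mathbb Z[X]$, expanding each summand of $c(n,k)$ times $b(n,k)$ as a product of $q$-binomials times a single $1/(1-X^{2m+1})$. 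Then \cref{lem:cyclotomic-growth} gives
\[
|J_n|=\rho^{(2/3+8/\pi^2)n^2+o(n^2)},
\]
so that $\kappa=3+2/3+8/\pi^2$ and $\lambda=3-2/3-8/\pi^2$, matching the target.

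\textbf{Main obstacle.} The main obstacle is the $p$-adic (cyclotomic) bookkeeping. One must show that the cancellation $\mathsf C_n$ with $\deg\mathsf C_n\sim n^2/3$ survives even after multiplying by $\mathsf L_n$. That is, for each odd $d$ and each $m,k$, one needs $\nu_d$ of the summand of $\mathsf I_n c(n,k)b(n,k)$ to be at least $\lfloor n/d\rfloor-1$, so that a single extra power of $\Phi_d$ suffices. I would first try a residue case analysis in $(n,k,m)\bmod d$, analogous to the indicator identities in \cref{lem:psi-cyclotomic}. If it fails for $F$-summands, I would use the alternative representation $c(n,k)=\sum_{j<k}F(0,j)+\sum_{m<n}G(m,k)$ instead.
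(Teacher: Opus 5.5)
Your analytic outline is fine. For the record, here $|y_0(n)|=\rho^2(1+O(\rho^{-2n}))$ and $|b_n|=\rho^{3n^2-n+O(1)}$, which does not affect the $n^2$-exponents.

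The arithmetic part has a genuine gap. The term-by-term integrality you propose is false within the degree budget $(2/3+8/\pi^2)n^2$, so no residue analysis mod $d$ can establish it.

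\textbf{Why term-by-term clearing fails.} In this specialization $(X;X^2)_k$ cancels into $(X;X^2)_{n+k}$, so $\mathsf b(n,k)$ has no odd-index cyclotomic poles at all. Its poles come from the even-index factors of $(X^2;X^2)_k(X^2;X^2)_{n-k}$. They are cleared by $\mathsf E_n=\prod_{e\le n}\Phi_{2e}^{\lfloor n/e\rfloor}$, of degree $2n^2/3+O(n)$, and all of $\mathsf E_n$ is already needed for the single term $\mathsf b(n,0)$. Your $\psi$-type factor $(X;X^2)_n(X;X^2)_{n+1}/(X^2;X^2)_n$ adds $\Phi_{2e}$-poles instead of removing them.

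After paying for $\mathsf E_n$ and $\mathsf L_n$, only $o(n^2)$ of degree remains, i.e.\ essentially one power of each odd $\Phi_d$. The individual summands need far more than that. For instance
\[
\mathsf b(n,k)\mathsf G(m-1,k)=(-1)^kX^{k^2+k+2m}\frac{(X^{2m+2k+1};X^2)_{n-m}\,(X^2;X^2)_{m-1}}{(X^2;X^2)_k\,(X^2;X^2)_{n-k}\,(X;X^2)_m}
\]
has order exactly $-(m+1)$ at $\Phi_1$ and order roughly $-m/d$ at $\Phi_d$ for odd $d$. So it is not a product of $q$-binomials times a single $1/(1-X^{2m+1})$. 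The summands $\mathsf b(n,k)\mathsf F(n,j)$ of the original potential are equally bad, with order about $-n/d$. Clearing $\mathsf b(n,k)\mathsf c(n,k)$ separately for each $k$ therefore forces an extra odd-index factor of degree about $n^2/3$ (roughly $\prod_{d\ \mathrm{odd}}\Phi_d^{\lfloor n/d\rfloor}$). That loses the constant $18\pi^2/(7\pi^2-24)$.

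\textbf{The missing idea: let the cancellation happen in the $k$-sum.} The paper uses exactly your fallback representation,
\[
\mathsf c(n,k)=\sum_{j=1}^{k}\frac{X^j}{X^{2j-1}-1}+\sum_{m=1}^{n}\mathsf G(m-1,k).
\]
The first piece is harmless term by term: its denominator divides $\mathsf U_n=\prod_{d\le 2n-1,\ d\ \mathrm{odd}}\Phi_d$, and $\mathsf E_n\mathsf b(n,k)\in\mathbb Z[X]$.

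For the second piece, interchange the sums and evaluate $\mathsf T_{n,m}:=\sum_{k=0}^{n}\mathsf b(n,k)\mathsf G(m-1,k)$ in closed form. First expand $(X^{2m+2k+1};X^2)_{n-m}$ by the finite $q$-binomial theorem. Then the inner sum $\sum_k(-1)^kX^{k^2+k+2kj}\binom nk_{X^2}$ equals $(X^{2j+2};X^2)_n$ by the same theorem. This yields
\[
\mathsf T_{n,m}=X^{2m}\frac{(X^2;X^2)_{m-1}}{(X;X^2)_m}\sum_{j=0}^{n-m}(-1)^jX^{j^2+2mj}\binom{n-m}{j}_{X^2}\binom{n+j}{j}_{X^2}.
\]
Here all the high-order odd poles and all even-index poles have cancelled. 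Only simple poles at $\Phi_d$, $d$ odd, $d\le 2n-1$ remain, so $\mathsf U_n\mathsf T_{n,m}\in\mathbb Z[X]$ without even needing $\mathsf E_n$. Hence $\mathsf D_n=\mathsf U_n\mathsf E_n$ clears both $\mathsf a_n$ and $\mathsf b_n$, with $\deg\mathsf D_n=(2/3+8/\pi^2)n^2+O(n\log n)$.

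Without this summation step, or another identity exhibiting the same cancellation across $k$, your plan cannot reach $\kappa=11/3+8/\pi^2$ and $\lambda=7/3-8/\pi^2$.
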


In what follows, we prove \cref{thm:delta} using the rational approximations to $S_x$, denoted by $a_n/b_n$, obtained from our $q$-WZ pair.

\subsection{The Bundschuh--Zudilin estimate}

Theorem 4 of Bundschuh--Zudilin~\cite{bundschuhzudilin2008rational} gives
\[
\mu\bigl(\ell_p(u,u)\bigr)\leq6
\]
for an integer $p\notin\{0,\pm1\}$ and a rational number $u$ satisfying $0<|u|<|p|$.
Taking $(p,u)=(x^2,x)$, the hypotheses are satisfied for every integer $x$ with $|x|\geq2$, and we obtain
\[
\mu(\Delta\bigl(x^{-1})\bigr)=\mu\bigl(\ell_{x^2}(x,x)\bigr)\leq6.
\]
The bound in \cref{thm:delta} improves this estimate.

Theorem 1 of Zudilin~\cite{zudilin2016generalized} shows that $\ell_p(u,v)$ is irrational for an integer $p$ with $|p|>1$ and nonzero rational numbers $u,v$, provided that $u\notin\{p,p^2,\ldots\}$ and $|v|<|p|$.
The choice $(p,u,v)=(x^2,x,x)$ satisfies these conditions for every integer $x$ with $|x|\geq2$, so the irrationality of $\Delta(x^{-1})$ also follows from this general theorem.

\subsection{\texorpdfstring{The specialized $q$-WZ pair}{The specialized q-WZ pair}}

Let $X$ be an indeterminate and consider the specialization $(q,\alpha,\beta,\gamma)=(X^2,X^2,X,X)$.
Under this specialization, $F$, $G$, $b$, $c$, $a_n$, and $b_n$ become
\[
\begin{aligned}
&\mathsf F(n,k)
=-X^{k+1}
\frac{(X;X^2)_k(X^2;X^2)_n}
{(X;X^2)_{n+k+1}(X;X^2)_n},\\
&\mathsf G(n,k)
=\frac{X^{2n+1}}{X^{2n+1}-1}\mathsf F(n,k),\\
&\mathsf b(n,k)
=(-1)^kX^{k^2}
\frac{(X;X^2)_{n+k}}
{(X;X^2)_k(X^2;X^2)_k(X^2;X^2)_{n-k}},\\
&\mathsf c(n,k)
=\sum_{m=1}^n\mathsf G(m-1,0)+\sum_{m=1}^k \mathsf F(n,m-1),\\
&\mathsf a_n
=\sum_{k=0}^n \mathsf c(n,k) \mathsf b(n,k),\qquad
\mathsf b_n
=\sum_{k=0}^n \mathsf b(n,k).
\end{aligned}
\]

In this section, we write $F(n,k)$, $G(n,k)$, $b(n,k)$, $c(n,k)$, $a_n$, and $b_n$ for the values of these expressions at $X=x$, where $x$ is an integer with $|x|\geq 2$.

\subsection{Rational approximations}

Here, we show that $a_n/b_n\to S_x$.
Put $\rho=|x|$.
By the lemma on finite products in \cref{sec:tools}, the constants implicit in $O_\rho(1)$ below are independent of $n,k,m$.

We first estimate the weight.
Since $\deg_X(X;X^2)_N=N^2$ and $\deg_X(X^2;X^2)_N=N(N+1)$, we have
\[
\begin{aligned}
\deg_X \mathsf b(n,k)
={}&k^2+(n+k)^2-k^2-k(k+1)-(n-k)(n-k+1)\\
={}&4nk-k^2-n.
\end{aligned}
\]
Applying \cref{lem:finite-products}, we obtain
\[
|b(n,k)|
=\rho^{4nk-k^2-n+O_\rho(1)}.
\]
Writing $k=n-j$, the difference between this exponent and that of the term with $k=n$ is $-2nj-j^2$.
Hence,
\[
\sum_{j=1}^{n}|b(n,n-j)|
=|b(n,n)|O_\rho(\rho^{-2n}),
\]
and, for all sufficiently large $n$,
\[
b_n=b(n,n)\left(1+O_\rho(\rho^{-2n})\right)\neq0,
\qquad
|b_n|=\rho^{3n^2-n+O_\rho(1)}.
\]

Similarly, by considering the degree of $\mathsf G(m,k)$ in $X$, we obtain
\[
|G(m,k)|
=\rho^{-m^2-2mk-m-k+O_\rho(1)}.
\]
In \cref{sec:construction}, we observed
\[
S_x-c(n,k)=\sum_{m=n}^{\infty}G(m,k).
\]
Moreover, the difference between the exponents of $G(m+1,k)$ and $G(m,k)$ is $-2m-2k-2$.
Thus, the sum of the absolute values of the terms with $m>n$ on the right-hand side is $O_\rho(\rho^{-2n-2k})$ times the absolute value of the term with $m=n$, and we obtain the bound, uniform in $n,k$,
\[
|S_x-c(n,k)|
\leq\rho^{-n^2-2nk-n-k+O_\rho(1)}.
\]
Multiplying this by the estimate for the weight and writing $k=n-j$, we find
\[
|b(n,n-j)|\,|S_x-c(n,n-j)|
\leq\rho^{-3n-j^2+j+O_\rho(1)}.
\]
Since the series $\sum_{j\geq0}\rho^{-j^2+j}$ converges, summing the preceding estimate gives
\[
|S_xb_n-a_n|
\leq\rho^{-3n+O_\rho(1)}.
\]
Together with the above estimate for $b_n$, this implies
\[
\lim_{n\to\infty}\frac{a_n}{b_n}=S_x.
\]

\subsection{Estimate of the approximation error}

We now use the Casoratian to obtain a precise estimate for the error of the rational approximations constructed in the preceding subsection.
Write the common recurrence as
\[
u_{n+2}+y_1(n)u_{n+1}+y_0(n)u_n=0.
\]
Specializing the coefficient $y_0(n)$ from Section~2 to the present parameters gives
\[
y_0(n)
=x^2
\frac{(x^{2n+1}-1)(x^{2n+2}-1)}
{(x^{2n+3}-1)(x^{2n+4}-1)}\cdot
\frac{x^{4n+7}+x^{2n+4}-2}
{x^{4n+3}+x^{2n+2}-2}.
\]
One easily checks that $y_0(n)>0$ for every $n$.
Moreover,
\[
y_0(n)=\rho^2\left(1+O_\rho(\rho^{-2n})\right).
\]
Set the Casoratian to be $W_n:=a_nb_{n+1}-a_{n+1}b_n$.
By the lemma on Casoratians in Section~3,
$W_{n+1}=y_0(n)W_n$.
The initial value is
\[
W_0=\frac{x^2(x^2+2x+2)}{x^2-1}>0.
\]
The above sign check shows that $W_n>0$ for every $n$.
Canceling factors in the product of the $y_0(n)$ gives
\[
W_n
=\frac{x^{2n+2}
\left(x^{4n+3}+x^{2n+2}-2\right)}
{(x^{2n+1}-1)(x^{2n+2}-1)}.
\]
In particular,
$|W_n|=\rho^{2n+2+O_\rho(1)}$.
Therefore, for all sufficiently large $n$,
\[
0<
\left|
\frac{a_n}{b_n}-\frac{a_{n+1}}{b_{n+1}}
\right|
=\frac{|W_n|}{|b_nb_{n+1}|}
=\rho^{-6n^2-2n+O_\rho(1)}.
\]
Put $d_r:=a_r/b_r-a_{r+1}/b_{r+1}$.
Since we have already shown that $a_n/b_n\to S_x$,
\[
\frac{a_n}{b_n}-S_x
=\sum_{r=n}^{\infty}d_r.
\]
The preceding estimate shows that there exists a constant $C_\rho>0$ such that, for all sufficiently large $r$,
$|d_{r+1}/d_r|\leq C_\rho\rho^{-12r}$.
Hence, for all sufficiently large $n$,
\[
\sum_{r=n+1}^{\infty}|d_r|<\frac12|d_n|.
\]
This inequality gives both the nonvanishing of the sum and upper and lower bounds for it, and therefore
\[
0<
\left|
\frac{a_n}{b_n}-S_x
\right|
=\rho^{-6n^2-2n+O_\rho(1)}.
\]
Combining this with the estimate for $b_n$ yields
\[
0<|b_nS_x-a_n|
=\rho^{-3n^2-3n+O_\rho(1)}.
\]

\subsection{A common denominator}

We define the following polynomials:
\[
\mathsf U_n
:=\prod_{\substack{1\leq d\leq2n-1\\d\ \mathrm{odd}}}
\Phi_d(X),\qquad
\mathsf E_n
:=\prod_{e=1}^{n}
\Phi_{2e}(X)^{\lfloor n/e\rfloor},
\]
and
$\mathsf D_n:=\mathsf U_n\mathsf E_n$.

\begin{proposition}\label{prop:delta-integrality}
For every $n\geq0$, we have
$\mathsf D_n\mathsf a_n,\mathsf D_n\mathsf b_n\in\mathbb Z[X]$.
\end{proposition}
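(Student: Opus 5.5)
I will treat the two sequences separately: first $\mathsf b_n$ term by term, then $\mathsf a_n=\sum_k\mathsf c(n,k)\mathsf b(n,k)$, analysing the potential together with the weight.

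\emph{Step 1: the weight.} Here
\[
\mathsf b(n,k)=(-1)^kX^{k^2}\binom nk_{X^2}\frac{(X;X^2)_{n+k}}{(X;X^2)_k(X^2;X^2)_n}.
\]
The quotient $(X;X^2)_{n+k}/(X;X^2)_k=\prod_{j=k}^{n+k-1}(1-X^{2j+1})$ is a polynomial, and $\binom nk_{X^2}\in\mathbb Z[X]$. So the only denominator is $(X^2;X^2)_n=\prod_{e\le n}(1-X^{2e})$. Its cyclotomic factors are the $\Phi_d$ with $d\mid 2e$ for some $e\le n$. I will compare orders using the valuation formulas $\nu_d$ from \cref{lem:psi-cyclotomic}, treating even and odd $d$ separately.
\begin{itemize}
\item Even $d=2e'$: the order of $\Phi_{2e'}$ in $(X^2;X^2)_n$ is $\lfloor n/e'\rfloor$, which $\mathsf E_n$ covers exactly.
\item Odd $d$: the order of $\Phi_d$ in $(X^2;X^2)_n$ is $\lfloor n/d\rfloor$. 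The numerator factor $\prod_{j=k}^{n+k-1}(1-X^{2j+1})$ runs over $n$ consecutive odd exponents, which is a complete set of residues in blocks of $d$. Hence it contains $\Phi_d$ at least $\lfloor n/d\rfloor$ times, and the odd part cancels with no help from $\mathsf U_n$.
\end{itemize}
This gives $\mathsf E_n\mathsf b(n,k)\in\mathbb Z[X]$ for each $k$, and therefore $\mathsf D_n\mathsf b_n\in\mathbb Z[X]$.

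\emph{Step 2: the potential.} Write $\mathsf c(n,k)=\sum_{m=1}^n\mathsf G(m-1,0)+\sum_{m=1}^k\mathsf F(n,m-1)$, where
\[
\mathsf G(m-1,0)=\frac{X^{2m}(X^2;X^2)_{m-1}}{(X;X^2)_m^2}.
\]
This term has squared odd denominators, so a termwise bound would need $\Phi_d^2$, which $\mathsf U_n$ does not supply. I therefore plan to use the alternative representation $\mathsf c(n,k)=\mathsf S-\sum_{m\ge n}\mathsf G(m,k)$. Better still, the symmetry $\mathcal S_{\alpha,\beta,\gamma}=\mathcal S_{\alpha,\gamma,\beta}$ suggests rewriting the first sum as the partial $q$-logarithm $\sum_{m=0}^{n-1}X^{m+1}/(X^{2m+1}-1)$ plus a correction of $\mathsf F$-type. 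I will verify this by telescoping with a closed $1$-form in the $n$-direction. The partial $q$-logarithm has common denominator $\prod_{m<n}(X^{2m+1}-1)$, whose odd cyclotomic part is $\prod_{d\le 2n-1,\ d\text{ odd}}\Phi_d$ to the first power only after taking a least common multiple. This is exactly $\mathsf U_n$, and explains why $\mathsf U_n$ carries no multiplicities.

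\emph{Step 3: the products $\mathsf F(n,m-1)\mathsf b(n,k)$ for $m\le k$.} After cancelling $(X;X^2)$ blocks, the quotient
\[
\frac{(X;X^2)_{n+k}(X;X^2)_{m-1}}{(X;X^2)_{n+m}(X;X^2)_k}
\]
has denominator dividing $(X;X^2)_k/(X;X^2)_{m-1}$. Together with the extra $(X^2;X^2)_n/(X;X^2)_n$ in $\mathsf F$, I will track $\nu_d$ with the residue-indicator method of \cref{lem:psi-cyclotomic}. For odd $d$ I expect a deficit of at most one copy of $\Phi_d$, occurring only for $d\le 2n-1$, which $\mathsf U_n$ absorbs. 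For even $d$ the factor $(X^2;X^2)_n$ in $\mathsf F$'s numerator should compensate, so $\mathsf E_n$ again suffices.

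\emph{Main obstacle.} The hardest step is Step 3 combined with Step 2: showing that the full products $\mathsf c(n,k)\mathsf b(n,k)$ lose at most one power of each odd $\Phi_d$. If the indicator bookkeeping fails termwise, I will fall back on the recurrence together with induction. I would check $\mathsf D_n\mathsf a_n\in\mathbb Z[X]$ for $n=0,1$ and use the explicit $y_0,y_1$ to propagate integrality. This route is delicate because the recurrence coefficients have denominators, so I would first try to make the termwise argument work.
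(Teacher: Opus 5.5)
\textbf{There is a genuine gap in Steps 2--3.} Your Step 1 is correct and is essentially the paper's argument for $\mathsf E_n\mathsf b(n,k)\in\mathbb Z[X]$. The problem is that the odd-index poles of $\mathsf a_n$ do not cancel term by term, so no indicator bookkeeping on individual products can succeed.

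In particular, your expectation that $\mathsf F(n,m-1)\mathsf b(n,k)$ loses at most one copy of each odd $\Phi_d$ is false. The factor $1/(X;X^2)_n$ in $\mathsf F(n,m-1)$ creates a deficit of size about $n/d$. For $d=1$, the order of $\Phi_1$ in $\mathsf F(n,m-1)\mathsf b(n,k)$ is exactly $-(n+1)$. Already $\mathsf F(1,0)\mathsf b(1,1)=X^2/(1-X)^2$, while $\mathsf D_1=\Phi_1\Phi_2=X^2-1$.

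The other pieces behave the same way:
\begin{itemize}
\item $\mathsf b(n,k)\mathsf G(m-1,k)$ has $\Phi_1$-order $-(m+1)$.
\item The ``correction'' in your Step 2, $\mathsf c(n,0)-\sum_{m=0}^{n-1}X^{m+1}/(X^{2m+1}-1)$, equals $X/(1-X)^2$ for $n=1$. It is independent of $k$, so it is simply multiplied by $\mathsf b_1=(1-X-X^2-X^3)/(1+X)$, which does not vanish at $X=1$.
\end{itemize}
Yet $\mathsf a_1=X^2(X^2+2X+2)/(1-X^2)$, so these higher-order poles cancel only after summing over $k$.

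Two further points do not help. The identity $\mathsf c(n,k)=\mathsf S-\sum_{m\ge n}\mathsf G(m,k)$ holds for numerical values, not in $\mathbb Q(X)$, and says nothing about denominators. The recurrence fallback also fails as stated: the normalized coefficients have non-cyclotomic denominators, for example $X^{4n+3}+X^{2n+2}-2$ in the specialized $y_0(n)$, so integrality does not propagate.

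\textbf{The missing idea is to do the $k$-summation before bounding denominators.} The paper takes the path $(0,0)\to(0,k)\to(n,k)$, so that
\[
\mathsf c(n,k)=\sum_{j=1}^{k}\frac{X^j}{X^{2j-1}-1}+\sum_{m=1}^{n}\mathsf G(m-1,k).
\]
The first part is a partial $q$-logarithm, cleared by $\mathsf U_n$, with $\mathsf E_n$ handling the weight, as you anticipated. In the second part the paper interchanges the sums and evaluates $\mathsf T_{n,m}:=\sum_{k=0}^{n}\mathsf b(n,k)\mathsf G(m-1,k)$ in closed form:
\begin{itemize}
\item write $(X;X^2)_{n+k}/(X;X^2)_{m+k}=(X^{2m+2k+1};X^2)_{n-m}$;
\item expand this by the finite $q$-binomial theorem;
\item sum over $k$ by the $q$-binomial theorem again.
\end{itemize}
This gives
\[
\mathsf T_{n,m}=X^{2m}\frac{(X^2;X^2)_{m-1}}{(X;X^2)_m}\sum_{j=0}^{n-m}(-1)^jX^{j^2+2mj}\binom{n-m}{j}_{X^2}\binom{n+j}{j}_{X^2}.
\]
Its only poles are simple poles at odd-index $\Phi_d$ with $d\le 2m-1$, so $\mathsf U_n\mathsf T_{n,m}\in\mathbb Z[X]$. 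This closed-form evaluation is what removes the $\Phi_1^{m+1}$ poles and all even-index poles of the individual terms. Your termwise plan has no substitute for it.
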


We prove \cref{prop:delta-integrality} below.

\subsection{The denominator of the weight}

Let $\nu_d$ denote the order of $\Phi_d(X)$ in a rational function.
For odd $d$,
\[
\nu_d\bigl((X^2;X^2)_N\bigr)
=\left\lfloor\frac Nd\right\rfloor,
\qquad
\nu_d\bigl((X;X^2)_N\bigr)
=\left\lfloor
\frac{N+(d-1)/2}{d}
\right\rfloor.
\]
Therefore,
\[
\begin{aligned}
\nu_d(\mathsf b(n,k))
={}&\left\lfloor\frac{n+k+(d-1)/2}{d}\right\rfloor
-\left\lfloor\frac{k+(d-1)/2}{d}\right\rfloor
-\left\lfloor\frac kd\right\rfloor
-\left\lfloor\frac{n-k}{d}\right\rfloor\\
\geq{}&\left\lfloor\frac nd\right\rfloor
-\left\lfloor\frac kd\right\rfloor
-\left\lfloor\frac{n-k}{d}\right\rfloor
\geq0.
\end{aligned}
\]
The first inequality follows because the difference of the first two terms counts the number of times a given residue class occurs in an interval of $n$ consecutive integers.
Thus, no cyclotomic polynomial of odd index remains in the denominator of $b(n,k)$.

Now let the even index be $d=2e$.
The product $(X;X^2)_N$ has no cyclotomic factor of even index, and
\[
\nu_{2e}\bigl((X^2;X^2)_N\bigr)
=\left\lfloor\frac Ne\right\rfloor.
\]
Hence,
\[
\nu_{2e}(b(n,k))
=-\left\lfloor\frac ke\right\rfloor
-\left\lfloor\frac{n-k}{e}\right\rfloor
\geq-\left\lfloor\frac ne\right\rfloor.
\]
It follows that, for every $0\leq k\leq n$,
\[
\mathsf E_n\mathsf b(n,k)\in\mathbb Z[X].
\]
In particular, summing over $k$ gives $\mathsf E_n\mathsf b_n\in\mathbb Z[X]$.

\subsection{\texorpdfstring{The part of $\mathsf a_n$ arising from $\mathsf F$}{The part of a n arising from F}}

Adding the two difference relations for the potential along the path from $(0,0)$ to $(0,k)$ and then to $(n,k)$ gives
\[
\mathsf c(n,k)
=\sum_{j=1}^{k}\frac{X^j}{X^{2j-1}-1}
+\sum_{m=1}^{n}\mathsf G(m-1,k).
\]
The two difference relations ensure that summing along either path from $(0,0)$ to $(n,k)$ gives the same value, so this sum equals the defined potential $\mathsf c(n,k)$.
Thus, $\mathsf a_n$ can be decomposed as
\[
\begin{aligned}
\mathsf a_n&=\mathsf a_n^{(\mathsf F)}+\mathsf a_n^{(\mathsf G)},\\
\mathsf a_n^{(\mathsf F)}
&:=\sum_{k=0}^{n}\mathsf b(n,k)
\sum_{j=1}^{k}\frac{X^j}{X^{2j-1}-1},\\
\mathsf a_n^{(\mathsf G)}
&:=\sum_{m=1}^{n}\sum_{k=0}^{n}\mathsf b(n,k)\mathsf G(m-1,k).
\end{aligned}
\]
The least common multiple of $X^{2j-1}-1\ (1\leq j\leq n)$ is $\mathsf U_n$.
Therefore,
\[
\mathsf U_n\sum_{j=1}^{k}\frac{X^j}{X^{2j-1}-1}
\in\mathbb Z[X]
\qquad(0\leq k\leq n).
\]
Together with the denominator of the weight found above, this gives
\[
\mathsf U_n\mathsf E_n\mathsf a_n^{(\mathsf F)}\in\mathbb Z[X].
\]

\subsection{\texorpdfstring{The part of $\mathsf a_n$ arising from $\mathsf G$}{The part of a n arising from G}}

For $1\leq m\leq n$, set
\[
\mathsf T_{n,m}
:=\sum_{k=0}^{n}\mathsf b(n,k)\mathsf G(m-1,k).
\]
Substituting the definitions of $\mathsf b(n,k)$ and $\mathsf G(m-1,k)$, we obtain
\[
\mathsf T_{n,m}
=X^{2m}\frac{(X^2;X^2)_{m-1}}{(X;X^2)_m}
\cdot
\sum_{k=0}^{n}
(-1)^kX^{k^2+k}
\frac{(X^{2m+2k+1};X^2)_{n-m}}
{(X^2;X^2)_k(X^2;X^2)_{n-k}}.
\]
We use the finite $q$-binomial theorem in the form
\[
(z;Q)_N
=\sum_{j=0}^{N}
(-1)^jQ^{j(j-1)/2}\binom Nj_Qz^j.
\]
Taking $Q=X^2$ and $z=X^{2m+2k+1}$ gives
\[
(X^{2m+2k+1};X^2)_{n-m}
=\sum_{j=0}^{n-m}
(-1)^jX^{j^2+2mj+2kj}
\binom{n-m}{j}_{X^2}.
\]
Substituting this, we obtain
\[
\begin{aligned}
\mathsf T_{n,m}
={}&X^{2m}\frac{(X^2;X^2)_{m-1}}{(X;X^2)_m}
\sum_{j=0}^{n-m}
(-1)^jX^{j^2+2mj}
\binom{n-m}{j}_{X^2}\\
&\times\frac1{(X^2;X^2)_n}
\sum_{k=0}^{n}
(-1)^kX^{k^2+k+2kj}
\binom nk_{X^2}.
\end{aligned}
\]
By the $q$-binomial theorem, the sum over $k$ equals $(X^{2j+2};X^2)_n$.
Using further that
\[
\frac{(X^{2j+2};X^2)_n}{(X^2;X^2)_n}
=\binom{n+j}{j}_{X^2},
\]
we obtain
\[
\mathsf T_{n,m}
=X^{2m}
\frac{(X^2;X^2)_{m-1}}{(X;X^2)_m}
\sum_{j=0}^{n-m}
(-1)^jX^{j^2+2mj}
\binom{n-m}{j}_{X^2}
\binom{n+j}{j}_{X^2}.
\]
The final finite sum belongs to $\mathbb Z[X]$.
For odd $d$,
\[
\nu_d\biggl(
\frac{(X^2;X^2)_{m-1}}{(X;X^2)_m}
\biggr)
=\biggl\lfloor\frac{m-1}{d}\biggr\rfloor
-\biggl\lfloor\frac{m+(d-1)/2}{d}\biggr\rfloor
\geq-1.
\]
If this order is negative, then $\Phi_d(X)$ is a factor of $(X;X^2)_m$, so $d\leq2m-1\leq2n-1$.
Therefore,
$\mathsf U_n\mathsf T_{n,m}\in\mathbb Z[X]$,
and summing over $m$ gives
\[
\mathsf U_n\mathsf a_n^{(\mathsf G)}\in\mathbb Z[X].
\]
Combining the two parts, we obtain $\mathsf D_n\mathsf a_n\in\mathbb Z[X]$.
This proves the proposition.

\subsection{The size of the denominator}

We write $U_n$, $E_n$, $D_n$ for the values of $\mathsf U_n$, $\mathsf E_n$, $\mathsf D_n$ at $X=x$.

\begin{lemma}
\[
\sum_{\substack{d\leq y\\d\ \mathrm{odd}}}\varphi(d)
=
\frac{2}{\pi^2}y^2+O(y\log y).
\]
\end{lemma}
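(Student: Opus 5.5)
We reduce the sum over odd $d$ to the classical totient summatory function
\[
\Phi(y):=\sum_{d\leq y}\varphi(d)=\frac{3}{\pi^2}y^2+O(y\log y),
\]
which we take as known. It can be proved via Möbius inversion: $\Phi(y)=\tfrac12\sum_{e\leq y}\mu(e)\lfloor y/e\rfloor(\lfloor y/e\rfloor+1)$, then replace $\lfloor y/e\rfloor$ by $y/e+O(1)$ and use $\sum_{e}\mu(e)/e^2=6/\pi^2$, which gives the error $O(y\log y)$ from $\sum_{e\leq y}y/e$ and $O(y)$ from the tail $\sum_{e>y}y^2/e^2$.

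For the reduction, write $\Phi_{\mathrm{odd}}(y)$ and $\Phi_{\mathrm{even}}(y)$ for the sums over odd and even $d\leq y$, so $\Phi(y)=\Phi_{\mathrm{odd}}(y)+\Phi_{\mathrm{even}}(y)$. Every even $d$ is $2m$. If $m$ is odd, then $\varphi(2m)=\varphi(m)$. If $m$ is even, then $\varphi(2m)=2\varphi(m)$. Hence
\[
\Phi_{\mathrm{even}}(y)=\Phi_{\mathrm{odd}}(y/2)+2\Phi_{\mathrm{even}}(y/2).
\]
Adding $\Phi_{\mathrm{odd}}(y)$ to both sides gives
\[
\Phi(y)=\Phi_{\mathrm{odd}}(y)+\Phi_{\mathrm{odd}}(y/2)+2\Phi_{\mathrm{even}}(y/2)
=\Phi_{\mathrm{odd}}(y)-\Phi_{\mathrm{odd}}(y/2)+2\Phi(y/2).
\]
Therefore
\[
\Phi_{\mathrm{odd}}(y)-\Phi_{\mathrm{odd}}(y/2)=\Phi(y)-2\Phi(y/2)=\frac{3}{\pi^2}\Bigl(1-\frac12\Bigr)y^2+O(y\log y)=\frac{3}{2\pi^2}y^2+O(y\log y).
\]

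Next we telescope over $y/2^i$ for $i\geq0$; the sum terminates once $y/2^i<1$. The main terms sum to
\[
\frac{3}{2\pi^2}\,y^2\sum_{i\geq0}4^{-i}=\frac{3}{2\pi^2}\cdot\frac43\,y^2=\frac{2}{\pi^2}y^2.
\]
The error terms sum to $\sum_i O\bigl((y/2^i)\log y\bigr)=O(y\log y)$, because the geometric factor keeps the total within a constant multiple of the first term.

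As a quicker alternative, one can apply Möbius inversion directly with the odd restriction: $\sum_{d\leq y,\ d\ \mathrm{odd}}\varphi(d)=\sum_{e\ \mathrm{odd}}\mu(e)\sum_{m\leq y/e,\ m\ \mathrm{odd}}m$. The inner sum is $(y/e)^2/4+O(y/e)$, and $\sum_{e\ \mathrm{odd}}\mu(e)/e^2=(6/\pi^2)(4/3)$, so the main term is $\tfrac14\cdot\tfrac{8}{\pi^2}y^2=\tfrac{2}{\pi^2}y^2$. There is no real obstacle either way; the only care needed is tracking the $O(y\log y)$ error through the summations.
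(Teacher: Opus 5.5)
Your proposal is correct, and your ``quicker alternative'' is exactly the paper's proof. The paper writes $\varphi(d)=d\sum_{a\mid d}\mu(a)/a$, restricts both $a$ and the cofactor $b$ to odd integers, and uses $\sum_{b\le y,\ b\ \mathrm{odd}}b=y^2/4+O(y)$. It then evaluates $\sum_{a\ \mathrm{odd}}\mu(a)/a^2=\prod_{p\ \mathrm{odd}}(1-p^{-2})=8/\pi^2$.

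Your main argument takes a genuinely different route. You do not redo the M\"obius inversion with a parity restriction. Instead you take the classical asymptotic $\sum_{d\le y}\varphi(d)=\frac{3}{\pi^2}y^2+O(y\log y)$ as a black box and transfer it to odd $d$. The transfer uses $\varphi(2m)\in\{\varphi(m),2\varphi(m)\}$, which gives the functional equation $\Phi_{\mathrm{odd}}(y)-\Phi_{\mathrm{odd}}(y/2)=\Phi(y)-2\Phi(y/2)$, and you then telescope over $y/2^i$. This reuses a standard result and shows the constant as $\frac{3}{\pi^2}\cdot\frac12\cdot\frac43$ without computing an Euler product over odd primes.

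The cost is bookkeeping in the telescoping. You should state the classical error in a form that is uniform down to small arguments, for instance $O\bigl(y(1+\log y)\bigr)$ for $y\ge1$, since $\log(y/2^i)$ degenerates near the last steps. You should also note that the main terms $\frac{3}{2\pi^2}(y/2^i)^2$ you sum for indices $i$ with $y/2^i<1$, where the actual differences vanish, contribute only $O(1)$. Both points are routine, and your error total $O(y\log y)$ stands. The paper's direct inversion is shorter and self-contained; yours trades that for reliance on the classical totient estimate.
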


\begin{proof}
Let $\mu$ denote the M\"obius function.
Using $\varphi(d)=d\sum_{a\mid d}\mu(a)/a$ and
$\sum_{b\leq y\text{ odd}}b
=y^2/4+O(y)$,
we obtain
\[
\sum_{\substack{d\leq y\\d\ \mathrm{odd}}}\varphi(d)
=\sum_{\substack{a\leq y\\a\ \mathrm{odd}}}
\mu(a)
\sum_{\substack{b\leq y/a\\b\ \mathrm{odd}}}b
=\frac{y^2}{4}
\sum_{\substack{a\geq1\\a\ \mathrm{odd}}}\frac{\mu(a)}{a^2}
+O(y\log y).
\]
Combining this with
\[
\sum_{\substack{a\geq1\\a\ \mathrm{odd}}}\frac{\mu(a)}{a^2}
=\prod_{p\ \mathrm{odd}}(1-p^{-2})
=\frac8{\pi^2},
\]
we obtain the desired estimate.
\end{proof}

Taking $y=2n-1$ gives
\[
\deg \mathsf U_n
=\sum_{\substack{1\leq d\leq2n-1\\d\ \mathrm{odd}}}
\varphi(d)
=\frac8{\pi^2}n^2+O(n\log n).
\]
On the other hand, the degree of $\mathsf E_n$ is
\[
\deg \mathsf E_n
=\sum_{e=1}^{n}\left\lfloor\frac ne\right\rfloor\varphi(2e)
=\frac23n^2+O(n).
\]
Indeed, write
\[
\sum_{e=1}^{n}\left\lfloor\frac ne\right\rfloor\varphi(2e)
=\sum_{m=1}^{n}\sum_{e\mid m}\varphi(2e),
\]
and let $m=2^as$, where $s$ is odd.
Then
\[
\sum_{e\mid m}\varphi(2e)
=(2^{a+1}-1)s
=2m-s.
\]
Here, $s$ is the largest odd divisor of $m$.
Denoting this largest odd divisor by $s(m)$, we have
\[
\begin{aligned}
\sum_{m=1}^{n}s(m)
&=\sum_{a\geq0}
\sum_{\substack{s\leq n/2^a\\s\ \mathrm{odd}}}s\\
&=\sum_{a\geq0}
\left(\frac{n^2}{4\cdot4^a}
+O\left(\frac{n}{2^a}\right)\right)
=\frac{n^2}{3}+O(n).
\end{aligned}
\]
Substituting this into the above divisor sum gives $\deg \mathsf E_n=2n^2/3+O(n)$.
It follows that
\[
\deg \mathsf D_n
=\left(\frac23+\frac8{\pi^2}\right)n^2
+O(n\log n).
\]
When $\mathsf D_n$ is written as a product of cyclotomic polynomials, the exponent of each $\Phi_d$ is at most $2n/d$ for $d\leq2n$.
Therefore, applying the lemma on products of cyclotomic polynomials in Section~3 with $N=2n$ gives, for an integer $x$ with $|x|\geq2$,
\[
|D_n|
=\rho^{(2/3+8/\pi^2)n^2+O_\rho(n\log n)}.
\]
Moreover, the zeros of cyclotomic polynomials have absolute value $1$, so $D_n\neq0$.

\subsection{Irrationality and irrationality measure}

For an integer $x$ with $|x|\geq2$, set
\[
A_n:=D_na_n,
\qquad
B_n:=D_nb_n.
\]
By the polynomiality established above, $A_n,B_n\in\mathbb Z$.
Combining $a_n/b_n\to S_x$ with the estimates for $|b_n|$ and $|D_n|$, we obtain
\[
|A_n|+|B_n|
\leq\rho^{(11/3+8/\pi^2)n^2+o(n^2)},
\]
\[
0<|B_nS_x-A_n|
=\rho^{-(7/3-8/\pi^2)n^2+o(n^2)}.
\]
Here,
\[
\lambda:=\frac73-\frac8{\pi^2}>0.
\]
Since $D_n(x)\neq0$, $b_n\neq0$ for all sufficiently large $n$, and $W_n>0$, we have $B_n\neq0$ and
\[
A_nB_{n+1}-A_{n+1}B_n
=D_n(x)D_{n+1}(x)W_n\neq0
\]
for all sufficiently large $n$.
Applying \cref{lem:irrationality-measure} with base $\rho$ and
\[
\kappa=\frac{11}{3}+\frac{8}{\pi^2},
\qquad
\lambda=\frac{7}{3}-\frac{8}{\pi^2},
\]
we conclude that $S_x$ is irrational and that
\[
\mu\bigl(\Delta(x^{-1})\bigr)=\mu(S_x)
\leq
1+\frac{11/3+8/\pi^2}{7/3-8/\pi^2}
=\frac{18\pi^2}{7\pi^2-24}.
\]
This proves \cref{thm:delta}.

\section{The generating function for \texorpdfstring{$4$-regular partitions}{4-regular partitions}}\label{sec:b4}

For a nonnegative integer $m$, let $b_4(m)$ denote the number of partitions of $m$ in which no part is divisible by $4$.
Set $b_4(0)=1$, corresponding to the empty partition, and write its generating function as
\[
B_4(r)
:=\sum_{m=0}^{\infty}b_4(m)r^m
=\frac{(r^4;r^4)_\infty}{(r;r)_\infty}
\qquad (|r|<1).
\]
For an integer $x$ with $|x|\geq 2$, specialize the three-parameter $q$-WZ pair of this paper by setting
$(q,\alpha,\beta,\gamma)=(x^2,-x,x^2,x)$.
The target series then becomes
\[
S_x:=\mathcal{S}_{-x,x^2,x}(x^2)=\frac{P_{-x,x}(x^2)-1}{1+x^{-1}}.
\]
The infinite product $P_{-x,x}(x^2)$ on the right is
\[
P_{-x,x}(x^2)
=\frac{(-x^{-2};x^{-2})_\infty}{(x^{-1};x^{-2})_\infty}
=\frac{(x^{-4};x^{-4})_\infty}{(x^{-1};x^{-1})_\infty}
=B_4(x^{-1}).
\]
Since $1+x^{-1}$ is a nonzero rational number, $S_x$ and $B_4(x^{-1})$ are related by an affine transformation with nonzero rational coefficients.
Their irrationality is therefore equivalent, and if they are irrational, their irrationality measures are equal.

\begin{theorem}\label{thm:b4}
For every integer $x$ with $|x|\geq2$, the number $B_4(x^{-1})$ is irrational and satisfies
\[
\mu\!\left(B_4(x^{-1})\right)\leq3.
\]
\end{theorem}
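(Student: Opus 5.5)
The plan is to follow the template of \cref{sec:psi} verbatim with $(q,\alpha,\beta,\gamma)=(X^2,-X,X^2,X)$, and to aim for the exponents $\kappa=4$, $\lambda=2$ in \cref{lem:irrationality-measure}, so that $\mu\le 1+4/2=3$.

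\emph{Specialization.} Here $-\gamma/\alpha=1$ and $\beta q/\alpha=-X^3$. Hence
\[
\mathsf b(n,k)=X^{k(k+1)}\binom nk_{X^2}\frac{(X^2;X^2)_{n+k}}{(-X^3;X^2)_k(-X;X^2)_n}
\]
and
\[
\mathsf F(n,k)=-X(-1)^k\frac{(-X^3;X^2)_k(-X;X^2)_n}{(X^2;X^2)_{n+k+1}(X;X^2)_n}.
\]
The $X$-degrees coincide with those in \cref{sec:psi}. \Cref{lem:finite-products} applies equally to factors $1+x^{-e}$, since $x^{-e}$ can be replaced by $-x^{-e}$. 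It therefore gives
\[
|b(n,k)|=\rho^{4nk-k^2+n+O_\rho(1)},\qquad |b_n|=\rho^{3n^2+n+O_\rho(1)},
\]
with $b_n\neq0$ for large $n$ because the term $k=n$ dominates. Likewise
\[
|G(m,k)|=\rho^{-m^2-2mk-3m-k-1+O_\rho(1)}.
\]
Via $S_x-c(n,k)=\sum_{m\ge n}G(m,k)$ from \cref{sec:construction}, this yields $a_n/b_n\to S_x$.

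\emph{Sharp error.} I will specialize $y_0(n)$ from \cref{prop:common-recurrence} and check that it is nonzero for integers $|x|\ge2$. The factors $\gamma q^{n+1}-\alpha=x^{2n+3}+x$ and $\beta q^{n+2}-\alpha=x^{2n+6}+x$ are nonzero. For $\mathcal K_n$ the top term dominates. This also gives $|y_0(n)|=\rho^{3}(1+O(\rho^{-2n}))$. Next I compute $W_0=-a_1\neq0$ from the formula for $a_1$ in \cref{sec:construction} and apply \cref{lem:casoratian}. As in \cref{sec:psi}, the telescoping of $d_m=a_m/b_m-a_{m+1}/b_{m+1}$ then gives
\[
0<|b_nS_x-a_n|=\rho^{-3n^2-4n+O_\rho(1)}.
\]

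\emph{Denominators.} I will take $\mathsf I_n=(-X;X^2)_n(-X;X^2)_{n+1}/(X^2;X^2)_n$, of degree $n^2+n+1$. The analogue of (4.1) holds, since $(-X^3;X^2)_k=(-X;X^2)_{k+1}/(1+X)$ and $(-X;X^2)_{k+1}\mid(-X;X^2)_{n+1}$. It gives
\[
\mathsf I_n\mathsf b(n,k)=(1+X)X^{k(k+1)}\mathsf U_{n,k},\qquad \mathsf U_{n,k}\in\mathbb Z[X].
\]
For $\mathsf a_n$ I use the symmetry (2.1) with starred parameters $(\alpha^*,\gamma^*)=(-X^3,-X^2)$. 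There
\[
R_n=\frac{(-X^2;X^2)_n}{(X;X^2)_n},\qquad
\mathsf b^*(n,k)=X^{k^2}\binom nk_{X^2}\frac{(X^2;X^2)_{n+k}}{(-X;X^2)_k(-X^3;X^2)_n}.
\]
I will check that $\mathsf I_nR_n\mathsf b^*(n,k)$ lies in $(1+X)\mathbb Z[X]$; note that $(-X^2;X^2)_n(X;X^2)_n^{-1}$ must be absorbed. Here I expect the real obstacle. The factor $(X;X^2)_n$ in the denominator of $R_n$ does not cancel against $\mathsf I_n$. So I may have to enlarge $\mathsf I_n$, for instance multiply it by $(X;X^2)_n/\gcd$. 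Alternatively, I would use instead the identity
\[
\frac{\alpha}{q}a_n=R_na_n^*
\]
together with the path decomposition of $\mathsf c(n,k)$ and $q$-binomial summations, as in \cref{sec:delta}.

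What I expect is that the net denominator $J_n$ has $|J_n|=\rho^{n^2+o(n^2)}$, with the cyclotomic gain of \cref{lem:psi-cyclotomic} lost or offset by the extra $(X;X^2)_n$-type factors. Subtracting the two integrality statements and dividing by $1-X^{-1}$ resp.\ $1+X^{-1}$ then gives $J_n\mathsf a_n,J_n\mathsf b_n\in\mathbb Z[X]$. With $A_n=J_na_n$ and $B_n=J_nb_n$ this yields
\[
|A_n|+|B_n|=\rho^{4n^2+o(n^2)},\qquad 0<|B_nS_x-A_n|=\rho^{-2n^2+o(n^2)}.
\]
Finally,
\[
A_nB_{n+1}-A_{n+1}B_n=J_nJ_{n+1}W_n\neq0,
\]
so \cref{lem:irrationality-measure} with $\kappa=4$, $\lambda=2$ gives irrationality and $\mu(B_4(x^{-1}))=\mu(S_x)\le3$.

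The main risk is pinning down the exact cyclotomic content of the denominator so that its degree is exactly $n^2+o(n^2)$. I would first compute $\nu_d$ of each factor for $d$ odd, $d$ of the form $2\cdot$odd, and $d\equiv0\pmod 4$ separately, mimicking the residue-counting in \cref{lem:psi-cyclotomic}.
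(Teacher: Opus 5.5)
Your analytic half is fine and matches the paper. That covers the degree counts for $\mathsf b(n,k)$ and $\mathsf G(m,k)$, the nonvanishing of $y_0(n)$ with $|y_0(n)|=\rho^3(1+O_\rho(\rho^{-2n}))$, and $W_0=-a_1\neq0$, hence $0<|b_nS_x-a_n|=\rho^{-3n^2-4n+O_\rho(1)}$. The gap is in the arithmetic half, which is where the exponent $3$ is decided, and there you only state an expectation.

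Your $\mathsf I_n=(-X;X^2)_n(-X;X^2)_{n+1}/(X^2;X^2)_n$ clears $\mathsf b(n,k)$ but not the starred side. Already for $k=0$ you get $\mathsf I_nR_n\mathsf b^*(n,0)=(1+X)(-X;X^2)_n(-X^2;X^2)_n/(X;X^2)_n$. Its numerator has only cyclotomic factors of even index, while $(X;X^2)_n$ has only odd ones, so all of $(X;X^2)_n$ survives. Your fallback of multiplying $\mathsf I_n$ by $(X;X^2)_n$ raises the degree to $2n^2+n+1$. That gives only $\kappa=5$, $\lambda=1$, $\mu\le6$, which is worse than Rochev's bound. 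The route via $\tfrac{\alpha}{q}a_n=R_na_n^*$ carries the same factor $R_n$, so it does not avoid the problem.

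The missing idea is that the factor $(-X;X^2)_n$ in your $\mathsf I_n$ is superfluous, so its degree budget can be spent on $(X;X^2)_n$. The paper takes $\mathsf I_n=(X;X^2)_n(-X;X^2)_{n+1}/(X^2;X^2)_n$, still of degree $n^2+n+1$. Here $(-X;X^2)_{n+1}$ cancels $(-X;X^2)_n$ in $\mathsf b(n,k)$, leaving a factor $1+X^{2n+1}$, and it also cancels $(-X^3;X^2)_n$ in $\mathsf b^*(n,k)$. The factor $(X;X^2)_n$ cancels the denominator of $R_n$.

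The remaining denominators are $(-X;X^2)_{k+1}$ in $\mathsf U_{n,k}=\mathsf I_n\mathsf b(n,k)/(1+X)$ and $(-X;X^2)_k$ in $\mathsf V_{n,k}=\mathsf I_nR_n\mathsf b^*(n,k)/(1+X)$. These are never cleared explicitly; they are absorbed by $(X^2;X^2)_{n+k}/((X^2;X^2)_k(X^2;X^2)_{n-k})$. Concretely, take $d=2s$ with $s$ odd, $h=(s-1)/2$, $n\equiv r$ and $k\equiv t\pmod s$:
\begin{itemize}
\item that quotient has $\nu_{2s}=\lfloor k/s\rfloor+\mathbf 1_{r+t\geq s}+\mathbf 1_{r<t}$;
\item $(-X;X^2)_{k+1}$ has $\nu_{2s}=\lfloor k/s\rfloor+\mathbf 1_{t\geq h}$;
\item $(-X;X^2)_k$ has $\nu_{2s}=\lfloor k/s\rfloor+\mathbf 1_{t\geq h+1}$.
\end{itemize}
The only deficient case, $r=t=h$ on the unstarred side, is rescued by the leftover $1+X^{2n+1}$. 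Indices with $d$ odd or $4\mid d$ are harmless.

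This is \cref{lem:b4-polynomials}. Subtracting the two integrality statements and dividing by $1+X^{-1}=(1+X)/X$ gives $\mathsf I_n\mathsf a_n,\mathsf I_n\mathsf b_n\in\mathbb Z[X]$ with $|I_n|=\rho^{n^2+n+1+O_\rho(1)}$. So your expected size $\rho^{n^2+o(n^2)}$ is right, but not for the reason you guessed. There is no cyclotomic gain of the $\mathsf C_n$ type here at all, and none is needed to reach $\kappa=4$, $\lambda=2$.
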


In what follows, we prove \cref{thm:b4} using the rational approximations to $S_x$, denoted by $a_n/b_n$, obtained from our $q$-WZ pair.

\subsection{Rochev's estimate}

For comparison, we record the upper bound obtained from a result of Rochev~\cite{rochev2010linear}.
In the notation of this paper, the series considered by Rochev is
\[
f(z):=\sum_{n=0}^{\infty}
\frac{z^n}{\prod_{k=1}^n(x^{2k}-1)}
=(-zx^{-2};x^{-2})_\infty,
\qquad
B_4(x^{-1})=\frac{f(1)}{f(-x)}.
\]
Here we used Euler's formula
\[
(-t;Q)_\infty
=\sum_{n=0}^{\infty}
\frac{Q^{n(n-1)/2}t^n}{(Q;Q)_n}
\]
with $Q=x^{-2}$ and $t=zx^{-2}$.

In the notation of Rochev's theorem, take
\[
q=x^2,\quad P(z)=z-1,\quad d=1,\quad
m=2,\quad (\alpha_1,\alpha_2)=(1,-x),\quad
s_1=s_2=1.
\]
Then $S=s_1+s_2=2$, and since $q=x^2/1$, the parameter $\gamma$ in that theorem is $0$.
Moreover, since $P$ is not a monomial, the constant $M$ and the exponent of the linear form in that theorem are
\[
M=dS+1+\sqrt{dS(dS+1)}=3+\sqrt6,
\qquad
\frac{M-1}{1-M\gamma}=2+\sqrt6.
\]
If $|x|\geq2$, then $P(x^{2n})\neq0$ for every $n\geq1$, and the conditions $\alpha_1/\alpha_2\notin q^{\mathbb Z}$ and $\alpha_j\notin P(0)q^{\mathbb Z_{>0}}$ also hold.
Thus all the hypotheses of the theorem are satisfied.
Consequently, for every $\varepsilon>0$ and all sufficiently large $H=\max\{|A_1|,|A_2|,2\}$,
\[
0<|A_0+A_1f(1)+A_2f(-x)|
\geq H^{-(2+\sqrt6+\varepsilon)},
\]
where $A_0,A_1,A_2\in\mathbb Z$ are not all zero.

Set $A_0=0$, $A_1=b$, and $A_2=-a$, with $b>0$.
If $a/b$ lies in a fixed bounded neighborhood of $B_4(x^{-1})$, then $|a|=O_x(b)$ and hence $H=O_x(b)$.
Outside this neighborhood, $|B_4(x^{-1})-a/b|$ has a positive lower bound, so it is enough to consider points within the neighborhood when estimating the irrationality measure.
Dividing the preceding inequality by $b|f(-x)|$ gives, for some constant $C_{x,\varepsilon}>0$,
\[
0<\left|B_4(x^{-1})-\frac ab\right|
\geq C_{x,\varepsilon}b^{-(3+\sqrt6+\varepsilon)}.
\]
This yields
\[
\mu\!\left(B_4(x^{-1})\right)
\leq3+\sqrt6
=5.449489\dots.
\]
The theorem of this section improves this upper bound.

\subsection{The specialized \texorpdfstring{$q$-WZ pair}{q-WZ pair}}

Let $X$ be an indeterminate and consider the specialization $(q,\alpha,\beta,\gamma)=(X^2,-X,X^2,X)$.
Under this specialization, $F$, $G$, $b$, $c$, $a_n$, and $b_n$ become
\[
\begin{aligned}
&\mathsf F(n,k)
=(-1)^{k+1}X
\frac{(-X^3;X^2)_k(-X;X^2)_n}
{(X^2;X^2)_{n+k+1}(X;X^2)_n},\\
&\mathsf G(n,k)
=\frac{X^{2n+1}}{X^{2n+1}-1}\mathsf F(n,k),\\
&\mathsf b(n,k)
=X^{k(k+1)}
\binom nk_{X^2}
\frac{(X^2;X^2)_{n+k}}
{(-X^3;X^2)_k(-X;X^2)_n},\\
&\mathsf c(n,k)
=\sum_{m=1}^n\mathsf G(m-1,0)+\sum_{m=1}^k \mathsf F(n,m-1),\\
&\mathsf a_n
=\sum_{k=0}^n \mathsf c(n,k) \mathsf b(n,k),\qquad
\mathsf b_n
=\sum_{k=0}^n \mathsf b(n,k).
\end{aligned}
\]

In this section, we write $F(n,k)$, $G(n,k)$, $b(n,k)$, $c(n,k)$, $a_n$, and $b_n$ for the values of these expressions at $X=x$, where $x$ is an integer with $|x|\geq 2$.

\subsection{Rational approximations}

Here, we show that $a_n/b_n\to S_x$.
Put $\rho=|x|$.
By the lemma on finite products in \cref{sec:tools}, the constants implicit in $O_\rho(1)$ below are independent of $n,k,m$.

We first estimate the weight.
For $0\leq k\leq n$, comparison of the degrees in $X$ in $\mathsf b(n,k)$ gives
\[
b(n,n)>0,
\qquad
|b(n,k)|
=\rho^{4nk-k^2+n+O_\rho(1)}.
\]
Indeed, $x^{n(n+1)}>0$, and $(x^2;x^2)_{2n}$ contains an even number of negative factors.
If $x<0$, both finite products in the denominator have sign $(-1)^n$, while if $x>0$, both are positive; hence $b(n,n)>0$ in either case.
Writing $k=n-j$, the difference between the exponent of this term and that of the $k=n$ term is $-2nj-j^2$.
The sum of the absolute values of all terms except the last is $O_\rho(\rho^{-2n})$ times the last term, and therefore, for all sufficiently large $n$,
\[
b_n
=b(n,n)\left(1+O_\rho(\rho^{-2n})\right)>0,
\qquad
b_n=\rho^{3n^2+n+O_\rho(1)}.
\]

Similarly, comparison of the degrees in $X$ in $\mathsf G(m,k)$ gives
\[
\operatorname{sgn}G(m,k)
=
\begin{cases}
1,&x>0,\\
(-1)^{k+1},&x<0,
\end{cases}
\qquad
|G(m,k)|
=\rho^{-m^2-2mk-3m-k-1+O_\rho(1)}.
\]
For each fixed $k$, all terms have the same sign.
Moreover, increasing $m$ to $m+1$ changes the exponent by $-2m-2k-4$, so the sum of the absolute values of the terms with $m>n$ is $O_\rho(\rho^{-2n-2k})$ times the absolute value of the term with $m=n$.
Combining this with the representation obtained in Section~2,
\[
S_x-c(n,k)=\sum_{m=n}^{\infty}G(m,k),
\]
we obtain, uniformly in $n,k$,
\[
0<|S_x-c(n,k)|
=\rho^{-n^2-2nk-3n-k-1+O_\rho(1)}.
\]
Multiplying this by the estimate for the weight and putting $k=n-j$ gives
\[
|b(n,n-j)|\,|S_x-c(n,n-j)|
=\rho^{-3n-j^2+j+O_\rho(1)}.
\]
Since the series $\sum_{j\geq0}\rho^{-j^2+j}$ converges, summing the preceding estimate yields
\[
|S_xb_n-a_n|
\leq\rho^{-3n+O_\rho(1)}.
\]
Together with the preceding estimate for $b_n$, this proves
\[
\lim_{n\to\infty}\frac{a_n}{b_n}=S_x.
\]

\subsection{Estimate of the approximation error}

We now use the Casoratian to obtain a precise estimate for the error of the rational approximations constructed in the preceding subsection.
Write the common recurrence as
\[
u_{n+2}+y_1(n)u_{n+1}+y_0(n)u_n=0.
\]
Under the present specialization,
\[
y_0(n)
=-x^2
\frac{(x^{2n+2}-1)(x^{2n+2}+1)(x^{2n+4}-1)}
{(x^{2n+3}-1)(x^{2n+3}+1)(x^{2n+5}+1)}
\cdot
\frac{x^{4n+8}+x^{2n+4}-x+1}
{x^{4n+4}+x^{2n+2}-x+1},
\]
and in particular $y_0(n)\neq 0$ for every integer $|x|\geq2$.
Indeed, none of the factors $x^r\pm1$ in this formula vanishes, and
\[
x^{4n+4}+x^{2n+2}-x+1>0,
\qquad
x^{4n+8}+x^{2n+4}-x+1>0.
\]
Moreover,
\[
|y_0(n)|=\rho^3\left(1+O_\rho(\rho^{-2n})\right).
\]
Set $W_n:=a_nb_{n+1}-a_{n+1}b_n$.
By \cref{lem:casoratian},
$W_{n+1}=y_0(n)W_n$.
The initial value is
\[
W_0=-a_1
=-\frac{x^2(x^4+x^2-x+1)}
{(x^2-x+1)(x+1)}
\neq0.
\]
Hence, for every $n$,
\[
W_n\neq 0,\qquad
|W_n|=\rho^{3n+O_\rho(1)}.
\]
Therefore, for all sufficiently large $n$,
\[
0<\left|\frac{a_n}{b_n}-\frac{a_{n+1}}{b_{n+1}}\right|
=\frac{|W_n|}{b_nb_{n+1}}
=\rho^{-6n^2-5n+O_\rho(1)}.
\]
Since we have already shown that $a_n/b_n\to S_x$,
\[
\frac{a_n}{b_n}-S_x
=\sum_{r=n}^{\infty}
\left(
\frac{a_r}{b_r}-\frac{a_{r+1}}{b_{r+1}}
\right).
\]
Put $d_r:=a_r/b_r-a_{r+1}/b_{r+1}$.
Then $|d_{r+1}/d_r|=\rho^{-12r+O_\rho(1)}$.
Thus, for all sufficiently large $n$, we have $\sum_{r>n}|d_r|<|d_n|/2$, and the first term dominates the sum of the absolute values of the remaining terms.
It follows that
\[
0<\left|\frac{a_n}{b_n}-S_x\right|
=\rho^{-6n^2-5n+O_\rho(1)}.
\]
Combining this with the estimate for $b_n$, we obtain
\[
0<|b_nS_x-a_n|
=\rho^{-3n^2-4n+O_\rho(1)}.
\]

\subsection{Parameter symmetry}

Consider the parameter transformation for $\beta=q$ mentioned in Section~2,
\[
(\alpha,\gamma)
\longmapsto
(\alpha^*,\gamma^*)
=\left(\frac{q^2}{\alpha},\frac{\gamma q}{\alpha}\right).
\]
Under the present specialization, this becomes
\[
(\alpha,\gamma)=(-X,X)
\longmapsto
(\alpha^*,\gamma^*)=(-X^3,-X^2).
\]
We denote the objects associated with the parameters $(\alpha^*,q,\gamma^*)$ by a superscript star.
For example,
\[
\mathsf b^*(n,k)
=(-1)^kX^{k^2}
\binom nk_{X^2}
\frac{(X^2;X^2)_{n+k}}
{(-X;X^2)_k(-X^3;X^2)_n}.
\]
By (2.1),
\[
\mathsf b_n+(1+X^{-1})\mathsf a_n
=\frac{(-X^2;X^2)_n}{(X;X^2)_n}\mathsf b_n^*.
\]

\subsection{A denominator-clearing factor}

For $n\geq0$, set
\[
\mathsf I_n
:=\frac{(X;X^2)_n(-X;X^2)_{n+1}}
{(X^2;X^2)_n}.
\]
We write $I_n$ for the value of $\mathsf I_n$ at $X=x$.
The degree of $\mathsf I_n$ in $X$ is
\[
\deg \mathsf I_n=n^2+(n+1)^2-n(n+1)=n^2+n+1.
\]
Consequently, for a fixed integer $x$ with $|x|\geq 2$,
\[
I_n\neq 0,\qquad |I_n|=\rho^{n^2+n+1+O_\rho(1)}.
\]

\begin{proposition}\label{prop:b4-integrality}
We have $\mathsf I_n\mathsf a_n,\mathsf I_n\mathsf b_n\in\mathbb Z[X]$.
\end{proposition}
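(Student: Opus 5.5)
I will follow the template of the $\psi$ section: clear denominators in both $\mathsf b_n$ and the symmetric combination $\mathsf b_n+(1+X^{-1})\mathsf a_n$, then take a difference. The first step is to compute, using $(-X^3;X^2)_k=(-X;X^2)_{k+1}/(1+X)$,
\[
\mathsf I_n\mathsf b(n,k)=(1+X)X^{k(k+1)}\mathsf U_{n,k},\qquad
\mathsf U_{n,k}:=\frac{(X;X^2)_n(-X;X^2)_{n+1}(X^2;X^2)_{n+k}}{(-X;X^2)_{k+1}(X^2;X^2)_k(X^2;X^2)_{n-k}},
\]
and similarly
\[
\mathsf I_n\frac{(-X^2;X^2)_n}{(X;X^2)_n}\mathsf b^*(n,k)=(-1)^kX^{k^2}\mathsf V_{n,k},
\]
with
\[
\mathsf V_{n,k}:=\frac{(-X;X^2)_{n+1}(-X^2;X^2)_n(X^2;X^2)_{n+k}}{(-X;X^2)_k(-X^3;X^2)_n(X^2;X^2)_k(X^2;X^2)_{n-k}}
=(1+X)\frac{(-X^2;X^2)_n(X^2;X^2)_{n+k}}{(-X;X^2)_k(X^2;X^2)_k(X^2;X^2)_{n-k}}.
\]

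The second step shows that $\mathsf U_{n,k}$ and $\mathsf V_{n,k}/(1+X)$ lie in $\mathbb Z[X]$. The factor $\binom{n+k}{k}_{X^2}(X^2;X^2)_n/(X^2;X^2)_{n-k}$ is a polynomial. For $\mathsf U_{n,k}$ we then use $(-X;X^2)_{k+1}\mid(-X;X^2)_{n+1}$, so $\mathsf U_{n,k}$ is even divisible by $(X;X^2)_n$. For $\mathsf V_{n,k}$ the only possible denominator is $(-X;X^2)_k=\prod_{j<k}(1+X^{2j+1})$. Since $1+X^{2j+1}=(1-X^{4j+2})/(1-X^{2j+1})$, its cyclotomic factors are $\Phi_{2d}$ with $d$ odd and $d\mid 2j+1$. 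I would count $\nu_{2d}$ as in \cref{lem:psi-cyclotomic}:
\[
\nu_{2d}\bigl((-X;X^2)_k\bigr)=\left\lfloor\frac{k+(d-1)/2}{d}\right\rfloor.
\]
For $(X^2;X^2)_N$ the order is $\lfloor N/d\rfloor$, since $\Phi_{2d}\mid 1-X^{2i}$ iff $d\mid i$. For $(-X^2;X^2)_n=\prod_{i\le n}(1-X^{4i})/(1-X^{2i})$ the order is $\lfloor 2n/d\rfloor-\lfloor n/d\rfloor$, again using $d$ odd. Then the floor-function inequality
\[
\left\lfloor\frac{2n}{d}\right\rfloor-\left\lfloor\frac nd\right\rfloor+\left\lfloor\frac{n+k}{d}\right\rfloor-\left\lfloor\frac kd\right\rfloor-\left\lfloor\frac{n-k}{d}\right\rfloor\ \ge\ \left\lfloor\frac{k+(d-1)/2}{d}\right\rfloor
\]
must be checked via residues $r,t$ of $n,k$ modulo $d=2s+1$, in the same style as the residue analysis in \cref{lem:psi-cyclotomic}. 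The left side is $\lfloor n/d\rfloor+\mathbf 1_{2r\ge d}+\mathbf1_{r+t\ge d}+\mathbf1_{r<t}$ plus the $k$-part, and the right side exceeds $\lfloor k/d\rfloor$ by $\mathbf1_{t\ge s+1}$. The case $t\ge s+1$ is handled as before: either $r<t$, or $r\ge t$, which gives $r+t\ge d$. This parity and residue bookkeeping is where I expect the main risk. In particular, I must double-check that $\Phi_{2d}$ with $d$ odd is the only kind of factor in play, and that the factors $\Phi_d$, $d$ odd, of $1+X^{2j+1}$ do not occur; they do not, since $\Phi_d(-1)\ne0$ divides nothing there.

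Finally, summing over $k$ gives $\mathsf I_n\mathsf b_n\in(1+X)\mathbb Z[X]$ and $\mathsf I_n\bigl(\mathsf b_n+(1+X^{-1})\mathsf a_n\bigr)\in(1+X)\mathbb Z[X]$. Subtracting, $(1+X)X^{-1}\mathsf I_n\mathsf a_n\in(1+X)\mathbb Z[X]$, so $\mathsf I_n\mathsf a_n\in X\mathbb Z[X]\subset\mathbb Z[X]$. Here we divide by $1+X$ in $\mathbb Q(X)$ and use that $X\cdot\mathbb Z[X]$ is integral. This proves the proposition.
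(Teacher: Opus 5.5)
\textbf{There is a genuine gap in your treatment of $\mathsf b_n$.} Your overall architecture is the paper's: multiply $\mathsf b_n$ and $\mathsf b_n+(1+X^{-1})\mathsf a_n$ by $\mathsf I_n$, show both lie in $(1+X)\mathbb Z[X]$, then subtract. Your treatment of the starred side and of the final subtraction is fine.

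The gap is on the unstarred side. Your formula for $\mathsf I_n\mathsf b(n,k)$ loses the factor $(-X;X^2)_n=(\alpha;q)_n$ from the denominator of $\mathsf b(n,k)$, so your $\mathsf U_{n,k}$ is $(-X;X^2)_n$ times the true cofactor. With your normalization, the correct identity is $\mathsf I_n\mathsf b(n,k)=(1+X)X^{k(k+1)}\mathsf U_{n,k}$ with
\[
\mathsf U_{n,k}=(1+X^{2n+1})\frac{(X;X^2)_n(X^2;X^2)_{n+k}}{(-X;X^2)_{k+1}(X^2;X^2)_k(X^2;X^2)_{n-k}}.
\]
Only the single factor $1+X^{2n+1}$ of $(-X;X^2)_{n+1}$ survives, so the divisibility $(-X;X^2)_{k+1}\mid(-X;X^2)_{n+1}$ you invoke proves nothing.

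The factor $(X;X^2)_n$ cannot help either. It has only odd-index cyclotomic factors, whereas $(-X;X^2)_{k+1}$ consists only of factors $\Phi_{2d}$ with $d$ odd. What is needed is a residue count for exactly these factors. Write $d=2s+1$ and let $r,t$ be the residues of $n,k$ modulo $d$. Then
\[
\nu_{2d}(\mathsf U_{n,k})=\mathbf 1_{r+t\geq d}+\mathbf 1_{r<t}-\mathbf 1_{t\geq s}+\mathbf 1_{r=s}.
\]
The first three terms are negative only when $r=t=s$, and that case is rescued precisely by the factor $1+X^{2n+1}$. There is no slack: for $\Phi_2=1+X$, the numerator and the denominator both have order $k+1$.

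So the $\mathsf b$-side, which you treat as trivial, is where the real content of the proposition sits. As written, your argument does not establish $\mathsf I_n\mathsf b_n\in(1+X)\mathbb Z[X]$, and hence does not establish the subtraction step either.

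\textbf{A smaller, harmless slip on the starred side.} For odd $d$, $\Phi_{2d}$ never divides $1+X^{2i}$, because every cyclotomic factor $\Phi_m$ of $1+X^{2i}$ has $4\mid m$. Hence $\nu_{2d}\bigl((-X^2;X^2)_n\bigr)=0$, not $\lfloor 2n/d\rfloor-\lfloor n/d\rfloor$. This does no damage, because your case analysis for $t\geq s+1$ uses only $\mathbf 1_{r+t\geq d}+\mathbf 1_{r<t}$. That is exactly the paper's count $\nu_{2d}(\mathsf V_{n,k}/(1+X))=\mathbf 1_{r+t\geq d}+\mathbf 1_{r<t}-\mathbf 1_{t\geq s+1}\geq 0$ (in your notation).
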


Let us prove \cref{prop:b4-integrality}.
First, using the identity
$(-X^3;X^2)_k=(-X;X^2)_{k+1}/(1+X)$,
we obtain
\[
\mathsf I_n\mathsf b(n,k)=(1+X)\mathsf U_{n,k},
\tag{6.1}
\]
\[
\mathsf I_n\frac{(-X^2;X^2)_n}{(X;X^2)_n}\mathsf b^*(n,k)
=(1+X)\mathsf V_{n,k},
\tag{6.2}
\]
where
\[
\mathsf U_{n,k}
:=(1+X^{2n+1})X^{k(k+1)}
\frac{(X;X^2)_n(X^2;X^2)_{n+k}}
{(X^2;X^2)_k(X^2;X^2)_{n-k}(-X;X^2)_{k+1}},
\]
\[
\mathsf V_{n,k}
:=(-1)^kX^{k^2}
\frac{(-X^2;X^2)_n(X^2;X^2)_{n+k}}
{(X^2;X^2)_k(X^2;X^2)_{n-k}(-X;X^2)_k}.
\]
\begin{lemma}\label{lem:b4-polynomials}
We have $\mathsf U_{n,k},\mathsf V_{n,k}\in\mathbb Z[X]$.
\end{lemma}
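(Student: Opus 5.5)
The plan is to prove both memberships one cyclotomic polynomial at a time, as in \cref{lem:psi-cyclotomic}. Each of $\mathsf U_{n,k}$ and $\mathsf V_{n,k}$ is, up to a monomial and a sign, a ratio of products of binomials $1\pm X^j$. Every such binomial is a product of cyclotomic polynomials: $1-X^j=\pm\prod_{d\mid j}\Phi_d(X)$ and $1+X^j=(1-X^{2j})/(1-X^j)$. So it suffices to show $\nu_d\geq0$ for every $d$.

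First I rewrite the factors with plus signs:
\[
(-X;X^2)_j=\frac{(X^2;X^4)_j}{(X;X^2)_j},\qquad
(-X^2;X^2)_n=\frac{(X^4;X^4)_n}{(X^2;X^2)_n},\qquad
1+X^{2n+1}=\frac{1-X^{4n+2}}{1-X^{2n+1}}.
\]
After this, $\mathsf U_{n,k}$ and $\mathsf V_{n,k}$ are ratios of products of the form $(X^a;X^b)_j$ with $b\in\{2,4\}$. For each of these the order of $\Phi_d$ counts the indices $i<j$ with $d\mid a+bi$. This gives the familiar floor formulas:
\[
\nu_d\bigl((X^2;X^2)_j\bigr)=\left\lfloor\frac{j}{d'}\right\rfloor,\quad d'=\frac{d}{\gcd(d,2)},
\qquad
\nu_d\bigl((X;X^2)_j\bigr)=\left\lfloor\frac{j+(d-1)/2}{d}\right\rfloor\ (d\ \text{odd}),
\]
and analogous formulas for $(X^2;X^4)_j$ and $(X^4;X^4)_n$. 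The factor $(X;X^2)_j$ contributes only for odd $d$. The factor $(X^2;X^4)_j$ contributes only for $d$ odd or $d\equiv2\pmod 4$. The factor $(X^4;X^4)_n$ contributes for all $d$.

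I then split into three classes of $d$: odd $d=2s+1$, $d\equiv2\pmod4$ with $d=2e$ and $e$ odd, and $4\mid d$. In each class I write $n\equiv r$ and $k\equiv t$ modulo the relevant modulus, namely $d$ or $e$. Subtracting the main terms, each $\nu_d$ becomes a short sum of indicator functions in $r$ and $t$, exactly as in the displayed formulas in the proof of \cref{lem:psi-cyclotomic}. The binomial part $(X^2;X^2)_{n+k}/\bigl((X^2;X^2)_k(X^2;X^2)_{n-k}\bigr)$ always contributes the nonnegative quantity $\mathbf1_{r+t\geq d'}+\mathbf1_{r<t}+\lfloor n/d'\rfloor$. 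The goal is to show that this surplus, together with any positive contribution from $(X;X^2)_n$, $(X;X^2)_{k+1}$, $(X^4;X^4)_n$ or $1-X^{4n+2}$, cancels each negative indicator.

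The class $4\mid d$ should be easy. There only $(X^2;X^2)$-type factors and, in $\mathsf V_{n,k}$, $(X^4;X^4)_n/(X^2;X^2)_n$ occur, and these are controlled directly by the $q$-binomial count.

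I expect the main obstacle to be the class $d=2e$ with $e$ odd. There the denominator factor $(X^2;X^4)_{k+1}$ (respectively $(X^2;X^4)_k$ in $\mathsf V_{n,k}$) can subtract one more copy of $\Phi_d$ than the binomial surplus supplies. In $\mathsf U_{n,k}$ the extra factor $1+X^{2n+1}$ must make up the deficit: it contributes exactly when $e\mid 2n+1$, i.e.\ when $r=(e-1)/2$. In $\mathsf V_{n,k}$ the factor $(X^4;X^4)_n/(X^2;X^2)_n$ plays the same role. I will try first to show that whenever $t\geq(e+1)/2$ (the situation in which $(X^2;X^4)_{k+1}$ has an extra zero), either $r<t$, or $r+t\geq e$, or $r=(e-1)/2$. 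This is the analogue of the case analysis with $s$ in \cref{lem:psi-cyclotomic}, and it is the only point where the boundary cases of the residues need checking.
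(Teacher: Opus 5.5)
Your plan follows the paper's proof. You show $\nu_d\geq 0$ for every $d$, splitting into $d$ odd, $4\mid d$, and $d=2e$ with $e$ odd. In the last class you reduce to indicators in the residues $r,t$ of $n,k$ modulo $e$, and $1+X^{2n+1}$ (which contains $\Phi_{2e}$ exactly when $r=h:=(e-1)/2$) rescues $\mathsf U_{n,k}$. Rewriting $1+X^j=(1-X^{2j})/(1-X^j)$, instead of locating $\Phi_{2e}$ in $1+X^{2j+1}$ directly, is only bookkeeping. The argument goes through, but three details in your description are wrong and would matter when you write it out.

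First, the binomial part has $\nu_d$ equal to $\lfloor k/d'\rfloor+\mathbf 1_{r+t\geq d'}+\mathbf 1_{r<t}$, with $\lfloor k/d'\rfloor$, not $\lfloor n/d'\rfloor$. This $\lfloor k/e\rfloor$ is exactly what cancels the main term coming from the $(-X;X^2)$ denominator, so there is no extra surplus $\lfloor n/e\rfloor-\lfloor k/e\rfloor$ to lean on.

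Second, $(-X;X^2)_{k+1}$ picks up its extra $\Phi_{2e}$ already when $t\geq h$. The threshold $t\geq h+1$ belongs to $(-X;X^2)_k$ in $\mathsf V_{n,k}$. Among $t\geq h$, the only case with $r\geq t$ and $r+t<e$ is $r=t=h$, and that is precisely why the term $\mathbf 1_{r=h}$ is needed:
\[
\nu_{2e}(\mathsf U_{n,k})=\mathbf 1_{r+t\geq e}+\mathbf 1_{r<t}-\mathbf 1_{t\geq h}+\mathbf 1_{r=h}\geq 0 .
\]
With your threshold $t\geq h+1$, the alternative $r=h$ is subsumed by $r<t$, and the one delicate case $t=h$ would be missed.

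Third, $(-X^2;X^2)_n=(X^4;X^4)_n/(X^2;X^2)_n$ has $\nu_{2e}=\lfloor n/e\rfloor-\lfloor n/e\rfloor=0$ for odd $e$. So it cannot compensate anything in $\mathsf V_{n,k}$. Nothing needs compensating there: if $t\geq h+1$ and $r\geq t$, then $r+t\geq e+1$, so $\nu_{2e}(\mathsf V_{n,k})=\mathbf 1_{r+t\geq e}+\mathbf 1_{r<t}-\mathbf 1_{t\geq h+1}\geq 0$, exactly as in the paper.
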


\begin{proof}
Let $\nu_d$ denote the order of $\Phi_d(X)$ in a rational function.
Also set
\[
\Delta_\ell(n,k)
:=\left\lfloor\frac{n+k}{\ell}\right\rfloor
-\left\lfloor\frac{k}{\ell}\right\rfloor
-\left\lfloor\frac{n-k}{\ell}\right\rfloor.
\]
The difference between the first and second terms is the number of multiples of $\ell$ in the interval $(k,n+k]$, which is at least $\lfloor n/\ell\rfloor$.
Hence $\Delta_\ell(n,k)\geq0$.

Every positive integer $d$ belongs to one of the following three cases: $d$ is odd, $4\mid d$, or $d=2s$ with $s$ odd.
The last case is separated because the cyclotomic polynomials occurring in the factors $1+X^{2j+1}$ of $(-X;X^2)$ are precisely of the form $\Phi_{2s}(X)$ with $s$ odd.

If $d$ is odd, then
\[
\nu_d(\mathsf U_{n,k})=\nu_d((X;X^2)_n)+\Delta_d(n,k),\qquad
\nu_d(\mathsf V_{n,k})=\Delta_d(n,k),
\]
and both are nonnegative.
If $d=4j$, then
\[
\nu_d(\mathsf U_{n,k})=\Delta_{2j}(n,k),\qquad
\nu_d(\mathsf V_{n,k})=\nu_{4j}((-X^2;X^2)_n)+\Delta_{2j}(n,k),
\]
and again both are nonnegative.

It remains to consider indices of the form $d=2s$ with $s$ odd.
Write the residues as
\[
n\equiv r\pmod s,
\qquad
k\equiv t\pmod s,
\qquad
0\leq r,t<s,
\qquad
h=\frac{s-1}{2}.
\]
The polynomial $\Phi_{2s}(X)$ occurs in $1+X^{2j+1}$ precisely when $j\equiv h\pmod s$.
Counting the factors gives
\[
\begin{aligned}
\nu_{2s}(\mathsf U_{n,k})
&=\mathbf1_{r+t\geq s}+\mathbf1_{r<t}
-\mathbf1_{t\geq h}+\mathbf1_{r=h},\\
\nu_{2s}(\mathsf V_{n,k})
&=\mathbf1_{r+t\geq s}+\mathbf1_{r<t}
-\mathbf1_{t\geq h+1},
\end{aligned}
\]
where $\mathbf1_E$ is the indicator of the condition $E$.
The first expression can be negative only if $t\geq h$, $r\geq t$, and $r+t<s$.
These three conditions force $r=t=h$, and the last term then cancels the negative term.
For the second expression, if $t\geq h+1$, then either $r<t$ or $r\geq t$; in the latter case, $r+t\geq2t>s$.
Thus this expression is also nonnegative.

We have shown that the cyclotomic exponent is nonnegative for every $d$.
Since the numerators and denominators of $\mathsf U_{n,k}$ and $\mathsf V_{n,k}$ are products of monomials and cyclotomic polynomials, both belong to $\mathbb Z[X]$.
\end{proof}

Summing (6.1) and (6.2) over $k$ gives
\[
\begin{aligned}
\mathsf I_n\mathsf b_n&=(1+X)\sum_{k=0}^{n}\mathsf U_{n,k}
\in(1+X)\mathbb Z[X],\\
\mathsf I_n\bigl(\mathsf b_n+(1+X^{-1})\mathsf a_n\bigr)
&=(1+X)\sum_{k=0}^{n}\mathsf V_{n,k}
\in(1+X)\mathbb Z[X].
\end{aligned}
\]
Taking the difference of these two identities gives
\[
(1+X^{-1})\mathsf I_n\mathsf a_n\in(1+X)\mathbb Z[X].
\]
Since $1+X^{-1}=(1+X)/X$,
$\mathsf I_n\mathsf a_n\in X\mathbb Z[X]\subset\mathbb Z[X]$.
This proves the proposition.

\subsection{Irrationality and irrationality measure}

For an integer $|x|\geq2$, set
\[
A_n:=I_na_n,
\qquad
B_n:=I_nb_n.
\]
The polynomiality proved above gives $A_n,B_n\in\mathbb Z$.
Combining the preceding estimates, we obtain
\[
|A_n|+|B_n|
=\rho^{4n^2+o(n^2)},
\]
\[
0<|B_nS_x-A_n|
=\rho^{-2n^2+o(n^2)}.
\]
Moreover, $B_n\neq0$ for all sufficiently large $n$, and
\[
A_nB_{n+1}-A_{n+1}B_n
=I_nI_{n+1}W_n
\neq0.
\]
Here we used $I_n\neq0$ and $b_n\neq0$ for all sufficiently large $n$.
Applying \cref{lem:irrationality-measure} with
$\kappa=4$ and $\lambda=2$,
we conclude that $S_x$ is irrational and that
\[
\mu(B_4(x^{-1}))=\mu(S_x)\leq1+\frac42=3.
\]
This proves \cref{thm:b4}.

\section{Identification with the Coussement--Smet Pad\'e approximants}\label{sec:coussement-smet}

Let $q_1\in(0,1)\cap\mathbb Q$ and
$q_2=p_2^{-1}$ with $p_2\in\mathbb Z$, $p_2\geq 2$.
Coussement--Smet~\cite{coussementsmet2009irrationality} constructed Pad\'e
approximants to the series
\[
h^\pm(q_1,q_2)
:=\sum_{m=1}^\infty\frac{q_1^m}{1\pm q_2^m}.
\]
When $q_1$ and $q_2$ have a common integral base, that is, when they can be
written as
\[
q_1=p^{-r_1},
\qquad
q_2=p^{-r_2}
\]
for an integer $p\geq2$ and relatively prime positive integers $r_1,r_2$,
Coussement--Smet proved the irrationality of $h^-(q_1,q_2)$ and obtained the
upper bound $m^-(r_2)$ for its irrationality measure.

Set $p_1:=q_1^{-1}$ throughout this section.
In the general setting, $p_1>1$ is rational and $p_2\geq2$ is an integer.
Specializing the three-parameter $q$-WZ pair to
$(q,\alpha,\beta,\gamma)=(p_2,p_2,p_1,p_2)$,
the formula for the case $\alpha=q$ in Section 2 gives
\[
\begin{aligned}
\mathcal S_{p_2,p_1,p_2}(p_2)
&=\sum_{k=0}^{\infty}\frac{1}{p_1p_2^k-1}
=\sum_{k=0}^\infty\frac{q_1q_2^k}{1-q_1q_2^k}\\
&=\sum_{k=0}^\infty\sum_{m=1}^\infty q_1^mq_2^{km}
=\sum_{m=1}^\infty\frac{q_1^m}{1-q_2^m}\\
&=h^-(q_1,q_2).
\end{aligned}
\]
The interchange of the two sums follows from Tonelli's theorem, since all
terms are nonnegative.
The purpose of this section is to show that the rational approximant of Coussement--Smet with degree $n$ and evaluation point $p_2^{n+1}$ coincides with $a_n/b_n$ in our construction.
In particular, our method recovers the upper bound $m^-(r_2)$ for the irrationality measure of $h^-(p^{-r_1},p^{-r_2})$.

\subsection{The Coussement--Smet Pad\'e approximation}

Coussement--Smet define
\[
f(z)
:=\sum_{\ell=0}^{\infty}
\frac{q_1^\ell}{z-q_2^\ell}
\]
for $z\notin q_2^{\mathbb{Z}_{\geq 0}}$, and choose polynomials $P_n,Q_n$ with
$\deg Q_n=n$ and $\deg P_n\leq n-1$ such that
\[
Q_n(z)f(z)-P_n(z)=O(z^{-n-1})
\qquad(z\to\infty).
\]
We use below the same normalization as in equation (2.6) of the
original paper, namely
\[
Q_n(z)
=(-1)^n\sum_{k=0}^n(-1)^k
\frac{(p_1;p_2)_{n+k}}
{(p_1;p_2)_k(p_2;p_2)_k(p_2;p_2)_{n-k}}
p_2^{k(k-1)/2-nk}z^k.
\tag{7.1}
\]
With this normalization,
\[
Q_n(0)=(-1)^n\frac{(p_1;p_2)_n}{(p_2;p_2)_n}.
\]
The corresponding polynomial $P_n$ is
\[
P_n(z)
=\sum_{\ell=0}^{\infty}q_1^\ell
\frac{Q_n(z)-Q_n(q_2^\ell)}{z-q_2^\ell}.
\tag{7.2}
\]
Moreover, $Q_n$ satisfies the orthogonality relations
\[
\sum_{\ell=0}^{\infty}q_1^\ell
Q_n(q_2^\ell)q_2^{j\ell}=0
\qquad(0\leq j<n).
\tag{7.3}
\]

For a positive integer $N$, set
\[
H_N:=\sum_{m=1}^{N-1}\frac{q_1^m}{1-q_2^m},
\]
with the convention that an empty sum is zero.
Then equation (2.14) of the original paper becomes
\[
h^-(q_1,q_2)
=H_N+\left(\frac{p_2}{p_1}\right)^Nf(p_2^N).
\tag{7.4}
\]
Thus, before clearing denominators, the Pad\'e approximant is
\[
R_{n,N}
:=H_N+\left(\frac{p_2}{p_1}\right)^N
\frac{P_n(p_2^N)}{Q_n(p_2^N)}.
\tag{7.5}
\]

\subsection{Identification of the approximants}

\begin{proposition}\label{prop:pade-identification}
Let $n\geq0$, and set
$z_n:=p_2^{n+1}$, $\rho_n:=(p_2/p_1)^{n+1}$.
When the three-parameter construction is specialized to
$(q,\alpha,\beta,\gamma)=(p_2,p_2,p_1,p_2)$, we have
\[
Q_n(z_n)=(-1)^nb_n,
\tag{7.6}
\]
\[
a_n
=b_nH_{n+1}+(-1)^n\rho_nP_n(z_n).
\tag{7.7}
\]
In particular, $b_n\neq0$, and
\[
\frac{a_n}{b_n}=R_{n,n+1}.
\tag{7.8}
\]
\end{proposition}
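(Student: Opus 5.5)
The identity (7.6) should follow by direct substitution. For $(q,\alpha,\beta,\gamma)=(p_2,p_2,p_1,p_2)$ we have $\beta q/\alpha=p_1$ and $-\gamma/\alpha=-1$. Writing $q=p_2$ throughout this argument (so $q$ no longer denotes the rational $q_1$), the weight becomes
\[
b(n,k)=(-1)^kq^{k(k+1)/2}\frac{(p_1;q)_{n+k}}{(p_1;q)_k(q;q)_k(q;q)_{n-k}}.
\]
In (7.1) at $z=z_n=q^{n+1}$ the power of $q$ is $k(k-1)/2-nk+(n+1)k=k(k+1)/2$, so the $k$th term of $Q_n(z_n)$ is $(-1)^n b(n,k)$. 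Summing over $k$ gives (7.6).

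Since $b_n\neq0$ was not proved in Sections 4--6 for this specialization, I would obtain it from (7.6): $Q_n$ has degree exactly $n$ and, being a Pad\'e/orthogonal denominator for the positive weights $q_1^\ell$ on the points $q_2^\ell\in(0,1]$, all its zeros lie in $[0,1]$, so $Q_n(z_n)\neq0$ because $z_n>1$. Once (7.7) is known, dividing by $b_n$ and using (7.5) with $N=n+1$ immediately gives (7.8).

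For (7.7) I would compare both sides with $\mathcal S b_n$, where $\mathcal S=h^-(q_1,q_2)$. Section 2 gives
\[
a_n-\mathcal S b_n=-\sum_{k=0}^n b(n,k)\sum_{m\geq n}G(m,k).
\]
For the right-hand side I would insert (7.2) in the form
\[
P_n(z)=Q_n(z)f(z)-\sum_\ell \frac{q_1^\ell Q_n(q_2^\ell)}{z-q_2^\ell}.
\]
Using (7.6) and (7.4) with $N=n+1$, the term $(-1)^n\rho_nQ_n(z_n)f(z_n)$ equals $b_n(\mathcal S-H_{n+1})$. Hence (7.7) is equivalent to the remainder identity
\[
\sum_{k=0}^n b(n,k)\sum_{m\geq n}G(m,k)
=(-1)^n\rho_n\sum_{\ell\geq0}\frac{q_1^\ell Q_n(q_2^\ell)}{z_n-q_2^\ell}.
\tag{$\ast$}
\]

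The main obstacle is proving $(\ast)$, and I would attack it by expanding both sides as double series and matching coefficients. On the right, I would expand
\[
\frac{1}{z_n-q_2^\ell}=z_n^{-1}\sum_{j\geq0}q_2^{\ell j}z_n^{-j},
\]
which converges since $z_n>1$. The orthogonality relations (7.3) then kill all terms with $j<n$, leaving
\[
(-1)^n\rho_n\sum_{j\geq n}z_n^{-j-1}\mu_j,\qquad \mu_j:=\sum_\ell q_1^\ell q_2^{\ell j}Q_n(q_2^\ell),
\]
a series in $p_2^{-(n+1)(j+1)}$ starting at order $j=n$. On the left, for this specialization
\[
G(m,k)=-\frac{q^{m+1}}{q^{m+1}-1}\frac{(p_1;q)_k}{(p_1;q)_{m+k+1}},
\]
so $b(n,k)G(m,k)$ involves $1/(p_1q^{n+k};q)_{m-n+1}$. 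I would expand this via the $q$-binomial theorem in powers of $p_1^{-1}$, and also expand each $\mu_j$ as an explicit sum over $k$ using (7.1) and the geometric series in $\ell$, which produces factors $1/(1-q_1q_2^{k+j})=1/(1-p_1^{-1}p_2^{-k-j})$. I would then match the two sides term by term in $m=j$.

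If the term-by-term matching becomes unwieldy, my fallback is to treat $p_1$ as an indeterminate. Both sides of $(\ast)$ are then meromorphic in $p_1$ with simple poles only at $p_1=p_2^{-i}$, so it suffices to compare residues at each such pole and the behaviour as $p_1\to\infty$, where both sides vanish. The residue comparison reduces to a finite terminating $q$-hypergeometric identity, which I expect to follow from the $q$-Chu--Vandermonde sum; it can also be checked symbolically in the same way as the Maple verifications in \cref{app:maple}.
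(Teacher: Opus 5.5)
The parts of your proposal other than $(\ast)$ are correct and agree with the paper: (7.6), $b_n\neq0$ via the zeros of $Q_n$, (7.8) from (7.7), and the reduction of (7.7) to the remainder identity $(\ast)$. Indeed $(\ast)$ is exactly (7.11) combined with (7.20). The gap is that $(\ast)$, which is the whole content of (7.7), is not proved, and the plan you give for it fails.

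Matching the two sides term by term with $m=j$ is false already for $n=0$. The $m=0$ term on the left is $G(0,0)=q_1/\bigl((1-q_1)(1-q_2)\bigr)$, whereas the $j=0$ term of your expanded right-hand side is $\rho_0z_0^{-1}\mu_0=q_1/(1-q_1)$. The identity only holds after a resummation that mixes $m$ with the index of the $q$-binomial expansion. Your preliminary cancellation $b(n,k)G(m,k)\propto1/(p_1q^{n+k};q)_{m-n+1}$ also works against you. It removes the factor $(p_1;p_2)_{n+k}/(p_1;p_2)_k$ that makes up the coefficients $d_{n,k}$ of $Q_n$. After that, the left side can no longer be written in terms of values of $Q_n$, so orthogonality cannot be applied to it.

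The missing idea is the following.
\begin{itemize}
\item Keep $b(n,k)=(-1)^nd_{n,k}z_n^k$ intact and write $G(m,k)$ as in (7.13).
\item Expand $1/(q_1q_2^k;q_2)_{m+1}$ by (7.14) with summation index $\ell$, and regroup by $j=m-n+\ell$.
\item Since $z_n^kq_2^{(m+1)k}q_2^{k\ell}=q_2^{kj}$, the sum over $k$ collapses to $Q_n(q_2^j)$. The left side of $(\ast)$ becomes $q_1^{n+1}\sum_{j\geq0}q_1^jC_{n,j}Q_n(q_2^j)$, with the finite, $k$-independent coefficient (7.16).
\item A finite $q$-binomial computation gives $C_{n,j}=(-1)^n/(1-q_2^{n+j+1})+\Pi_n(q_2^j)$, where $\Pi_n$ is a fixed polynomial of degree at most $n-1$. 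Then (7.3) annihilates the $\Pi_n$ part.
\end{itemize}
In your normalization, the right side of $(\ast)$ after orthogonality is $(-1)^nq_1^{n+1}\sum_jq_1^jQ_n(q_2^j)\,w_j^n/(1-w_j)$ with $w_j=q_2^{n+1+j}$. What you would need is precisely that $C_{n,j}-(-1)^nw_j^n/(1-w_j)$ is a fixed polynomial of degree $<n$ evaluated at $q_2^j$. Nothing in your proposal produces this.

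Your fallback does not close the gap either. The poles $p_1=p_2^{-i}$ accumulate at $p_1=0$, so $0$ is not an isolated singularity. A difference with vanishing principal parts that tends to $0$ as $p_1\to\infty$ need not vanish identically; $e^{1/p_1}-1$ is an example. You would need a growth estimate near $p_1=0$, which you do not supply. Moreover, the residue identity itself is only conjectured.
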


\begin{proof}
For this specialization, the weight is
\[
b(n,k)
=(-1)^kp_2^{k(k+1)/2}
\frac{(p_1;p_2)_{n+k}}
{(p_1;p_2)_k(p_2;p_2)_k(p_2;p_2)_{n-k}}.
\tag{7.9}
\]
Substituting $z=z_n$ into (7.1) gives
\[
p_2^{k(k-1)/2-nk}z_n^k
=p_2^{k(k+1)/2},
\]
so comparison term by term yields (7.6).
Since $Q_n$ is an orthogonal polynomial with respect to a positive discrete
measure on $(0,1]$, all its zeros lie in $(0,1)$.
Because $z_n>1$, it follows that $Q_n(z_n)\neq0$, and hence $b_n\neq0$.

We next prove (7.7).
The specialized functions $F,G$ are
\[
F(n,k)
=-\frac{(p_1;p_2)_k}{(p_1;p_2)_{n+k+1}},
\qquad
G(n,k)
=\frac{p_2^{n+1}}{p_2^{n+1}-1}F(n,k).
\tag{7.10}
\]
The error representation obtained in Section 2 gives
\[
h^-(q_1,q_2)b_n-a_n
=\sum_{k=0}^n b(n,k)\sum_{m=n}^{\infty}G(m,k).
\tag{7.11}
\]
We compare the right-hand side with the error of the Pad\'e approximation.

Write $Q_n(z)=\sum_{k=0}^nd_{n,k}z^k$.
Comparing the individual terms in (7.6), we obtain
\[
b(n,k)=(-1)^nd_{n,k}z_n^k.
\tag{7.12}
\]
Substituting $q_1=p_1^{-1}$ and $q_2=p_2^{-1}$ into (7.10) also gives
\[
G(m,k)
=\frac{(-1)^mq_1^{m+1}
q_2^{m(m+1)/2+(m+1)k}}
{(1-q_2^{m+1})(q_1q_2^k;q_2)_{m+1}}.
\tag{7.13}
\]
By the $q$-binomial theorem,
\[
\frac{1}{(u;q_2)_{m+1}}
=\sum_{\ell=0}^{\infty}
\binom{m+\ell}{\ell}_{q_2}u^\ell
\qquad(|u|<1).
\tag{7.14}
\]
Substituting (7.12)--(7.14) into (7.11), interchanging the order of the
absolutely convergent sums, and setting $j=m-n+\ell$, we obtain
\[
h^-(q_1,q_2)b_n-a_n
=q_1^{n+1}\sum_{j=0}^{\infty}
q_1^j C_{n,j}Q_n(q_2^j),
\tag{7.15}
\]
where
\[
C_{n,j}
:=\sum_{s=0}^j(-1)^s
\frac{q_2^{(n+s)(n+s+1)/2}}
{1-q_2^{n+s+1}}
\binom{n+j}{j-s}_{q_2}.
\tag{7.16}
\]
We now rewrite this coefficient in a form suitable for applying the
orthogonality relation (7.3).
Setting $M:=n+j+1$, we have
\[
\frac{1}{1-q_2^{n+s+1}}
\binom{n+j}{j-s}_{q_2}
=\frac{(q_2;q_2)_{M-1}}
{(q_2;q_2)_{j-s}(q_2;q_2)_{n+s+1}}.
\]
Furthermore, the case $x=1$ of the finite $q$-binomial theorem gives
\[
\sum_{t=0}^{M}
\frac{(-1)^tq_2^{t(t-1)/2}}
{(q_2;q_2)_t(q_2;q_2)_{M-t}}=0.
\]
Using these identities in (7.16), we obtain
\[
C_{n,j}
=\frac{(-1)^n}{1-q_2^{n+j+1}}
+\Pi_n(q_2^j),
\tag{7.17}
\]
where
\[
\Pi_n(X)
:=(-1)^n\sum_{t=1}^n
\frac{(-1)^tq_2^{t(t-1)/2}}{(q_2;q_2)_t}
\prod_{u=n+2-t}^{n}(1-q_2^uX),
\tag{7.18}
\]
and the product for $t=1$ is understood to be $1$.
The sum in (7.18) is defined to be zero when $n=0$, while
$\deg\Pi_n\leq n-1$ for $n\geq1$.
It follows from (7.3) that
\[
\sum_{j=0}^{\infty}
q_1^j\Pi_n(q_2^j)Q_n(q_2^j)=0.
\tag{7.19}
\]
On the other hand, $z_n=q_2^{-(n+1)}$ and
$\rho_n=(q_1/q_2)^{n+1}$ imply that
\[
\frac{\rho_n}{z_n-q_2^j}
=\frac{q_1^{n+1}}{1-q_2^{n+j+1}}.
\]
Combining (7.15), (7.17), (7.19), and (7.2), we obtain
\[
h^-(q_1,q_2)b_n-a_n
=(-1)^n\rho_n
\bigl(Q_n(z_n)f(z_n)-P_n(z_n)\bigr).
\tag{7.20}
\]
Substituting (7.4) and (7.6) into (7.20) yields (7.7), and (7.5) then gives
(7.8).
\end{proof}

\subsection{Relation to the arithmetic results}

The identification in \cref{prop:pade-identification} concerns the rational
approximants before denominators are cleared.
Write the numerator and denominator used by Coussement--Smet before clearing
denominators as
\[
\begin{aligned}
\widehat A_{n,N}
&:=p_1^NQ_n(p_2^N)H_N+p_2^NP_n(p_2^N),\\
\widehat B_{n,N}
&:=p_1^NQ_n(p_2^N).
\end{aligned}
\]
Then (7.6) and (7.7) give
\[
\widehat A_{n,n+1}=(-1)^np_1^{n+1}a_n,
\qquad
\widehat B_{n,n+1}=(-1)^np_1^{n+1}b_n.
\tag{7.21}
\]
Consequently, multiplying both sides of (7.21) by the factor used by
Coussement--Smet to clear denominators yields the integer linear forms used
in their irrationality proof.

For the representation in terms of a reduced common base
\[
q_1=p^{-r_1},
\qquad
q_2=p^{-r_2},
\qquad
p\geq2,
\qquad
\gcd(r_1,r_2)=1,
\]
set
\[
g_r
:=\frac{3}{\pi^2}
\left[
1+
\prod_{\substack{\varpi\mid r\\\varpi\ \mathrm{prime}}}
\frac{\varpi^2}{\varpi^2-1}
\sum_{\substack{1\leq j<r\\(j,r)=1}}
\left(\frac1{j^2}-\frac1{r^2}\right)
\right]
\]
and
\[
m^-(r)
:=1+\frac{3/2+g_r}{3/2-g_r}
=\frac{6}{3-2g_r}.
\tag{7.22}
\]
Choosing the evaluation indices so that $N/n\to1$ and then applying equations
(1.5), (1.8), (1.9), and (1.14) of Coussement--Smet yields precisely these
quantities $g_r$ and $m^-(r)$.
Multiplying the numerator and denominator of the approximant in
\cref{prop:pade-identification} by the factor used by Coussement--Smet and
applying their asymptotic estimates gives
\[
h^-(p^{-r_1},p^{-r_2})\notin\mathbb Q,
\qquad
\mu\!\left(h^-(p^{-r_1},p^{-r_2})\right)
\leq m^-(r_2).
\]
If one starts from a representation with $\gcd(r_1,r_2)>1$, one first
replaces the base so that the exponents become relatively prime and then
uses the resulting second exponent in (7.22).

\appendix
\crefalias{section}{appendix}

\section{Maple verification of the common recurrence}\label{app:maple}

The symbolic computation used in the proof of
\cref{prop:common-recurrence} was performed with Maple 2026.0 and the
standard package \texttt{QDifferenceEquations}.  The complete input and
saved output are archived as \texttt{verify\_master\_common\_recurrence.mpl}
and \texttt{verify\_master\_common\_recurrence.out}, respectively, at
\href{https://doi.org/10.5281/zenodo.21950343}{Zenodo,
doi:10.5281/zenodo.21950343}.

The script applies \texttt{QDifferenceEquations:-Zeilberger} to the weight
$b(n,k)$ and verifies that the normalized operator has degree two and leading
coefficient one, that its constant coefficient is the displayed function
$y_0(n)$, and that the resulting Z-pair identity holds.  It then applies the
same algorithm to the term $H(n,k)$ occurring in the proof and verifies the
corresponding order-zero identity.  Finally, it checks the four boundary
values at $k=0,n+3$ and evaluates the finite-product form of the second
identity at two independent sets of rational parameters.  The final marker
\texttt{MASTER\_WEIGHT\_HAS\_COMMON\_RECURRENCE} is printed only when all
these checks succeed.

The symbolic identities are identities of rational functions in
$q,\alpha,\beta,\gamma$.  If a specialization makes a denominator in the
normalized operator vanish, one may instead use the unnormalized operator or
clear the denominators of the recurrence.

\section{Maple verification of reciprocal-product duality}\label{app:maple-duality}

The computation associated with
\cref{sec:reciprocal-product-duality} is archived as
\texttt{verify\_reciprocal\_product\_duality.mpl} and
\texttt{verify\_reciprocal\_product\_duality.out} at the same
\href{https://doi.org/10.5281/zenodo.21950343}{Zenodo record,
doi:10.5281/zenodo.21950343}.

After specializing $\beta=q$, the script generates the recurrence
coefficients $y_0,y_1$, substitutes
$(\alpha^*,\gamma^*)=(q^2/\alpha,\gamma q/\alpha)$, and verifies the two
gauge-covariance identities for the coefficients.  It also verifies the four
initial-value identities obtained from
\[
\frac{\alpha}{q}a_j=R_ja_j^*,
\qquad
b_j+\left(1-\frac{\alpha}{q}\right)a_j=R_jb_j^*
\quad (j=0,1).
\]
All six differences simplify to zero as rational functions in
$q,\alpha,\gamma$; the saved output records \texttt{true} for each check.

\printbibliography[title={References}]

\end{document}